\newcommand{\Grs}{\cat{Flag}_N^*}
\newcommand{\Gr}{\cat{Flag}_N}
\newcommand{\BNC}{\cal{N}\cal{H}}
\newcommand{\HG}{H^G}
\newcommand{\GrGs}{\cat{EqFlag}_N^*}
\newcommand{\GrG}{\cat{EqFlag}_N}
\newcommand{\refequal}[1]{\xy {\ar@{=}^{#1}
(-1,0)*{};(1,0)*{}};
\endxy}
\newcommand{\onen}{\mathbf{1}_{n}}
\newcommand{\onem}{\mathbf{1}_{m}}
\newcommand{\Uq}{{\bf U}_q(\mathfrak{sl}_2)}
\newcommand{\U}{\dot{{\bf U}}}
\newcommand{\UA}{{_{\cal{A}}\dot{{\bf U}}}}
\newcommand{\Ucat}{\cal{U}}
\newcommand{\Ucatq}{\cal{U}^*}
\newcommand{\UcatD}{\dot{\cal{U}}}
\newcommand{\xsum}[2]{
  \xy
  (0,.4)*{\sum};
  (0,3.7)*{\scs #2};
  (0,-2.9)*{\scs #1};
  \endxy
}
\newcommand{\bfit}[1]{\textit{#1}}
\newcommand{\cat}[1]{\ensuremath{\mbox{\bfseries {\upshape {#1}}}}}
\newcommand{\BOX}{\hbox {$\sqcap$ \kern -1em $\sqcup$}}
\newcommand{\To}{\Rightarrow}
\renewcommand{\to}{\rightarrow}
\newcommand{\maps}{\colon}
\newcommand{\Res}{{\rm Res}}
\newcommand{\End}{{\rm End}}
\newcommand{\scs}{\scriptstyle}
\theoremstyle{definition}
\newtheorem{thm}{Theorem}[section]
\newtheorem{cor}[thm]{Corollary}
\newtheorem{lem}[thm]{Lemma}
\newtheorem{rem}[thm]{Remark}
\newtheorem{prop}[thm]{Proposition}
\newtheorem{defn}[thm]{Definition}
        \newcommand{\be}{\begin{equation}}
        \newcommand{\ee}{\end{equation}}
        \newcommand{\ba}{\begin{eqnarray}}
        \newcommand{\ea}{\end{eqnarray}}
        \newcommand{\ban}{\begin{eqnarray*}}
        \newcommand{\ean}{\end{eqnarray*}}
        \newcommand{\barr}{\begin{array}}
        \newcommand{\earr}{\end{array}}
\numberwithin{equation}{section}
\def\emph#1{{\sl #1\/}}
\let\hat=\widehat
\let\tilde=\widetilde
\let\phi=\varphi
\let\theta=\vartheta
\let\epsilon=\varepsilon
\def\C{{\mathbbm C}}
\def\N{{\mathbbm N}}
\def\Z{{\mathbbm Z}}
\def\Q{{\mathbbm Q}}
\def\cal#1{\mathcal{#1}}%
\def\1{\mathbbm{1}}%
\def\nn{\notag}
\newcommand{\twoIu}{\xybox{%
  (-6,0)*{};
  (6,0)*{};
  (0,6)*{}="f'";
  (0,12)*{}="f";
  (-3,18)*{}="t1";
  (3,18)*{}="t2";
    (-3,0)*{}="t1'";
  (3,0)*{}="t2'";
  (0,0)*{}="b";
  "t1";"f" **\crv{(-3,14)};?(.1)*\dir{<};
  "t2";"f" **\crv{(3,14)};?(.1)*\dir{<};
  "f"+(.5,0);"f'"+(.5,0) **\dir{-};
  "f"+(-.5,0);"f'"+(-.5,0) **\dir{-};
  "t1'";"f'" **\crv{(-3,4)};
  "t2'";"f'" **\crv{(3,4)};
}}
\newcommand{\twoId}{\xybox{%
  (-6,0)*{};
  (6,0)*{};
  (0,6)*{}="f'";
  (0,12)*{}="f";
  (-3,18)*{}="t1";
  (3,18)*{}="t2";
    (-3,0)*{}="t1'";
  (3,0)*{}="t2'";
  (0,0)*{}="b";
  "t1";"f" **\crv{(-3,14)};?(.25)*\dir{>};
  "t2";"f" **\crv{(3,14)};?(.25)*\dir{>};
  "f"+(.5,0);"f'"+(.5,0) **\dir{-};
  "f"+(-.5,0);"f'"+(-.5,0) **\dir{-};
  "t1'";"f'" **\crv{(-3,4)};
  "t2'";"f'" **\crv{(3,4)};
}}
\newcommand{\bbe}[1]{\xybox{%
  (-2,0)*{};
  (2,0)*{};
  (0,0);(0,-18) **\dir{-}; ?(.5)*\dir{<}+(2.3,0)*{\scriptstyle{#1}};
}}
\newcommand{\bbf}[1]{\xybox{%
  (-2,0)*{};
  (2,0)*{};
  (0,0);(0,-18) **\dir{-}; ?(.5)*\dir{>}+(2.3,0)*{\scriptstyle{#1}};
}}
\newcommand{\bbsid}{\xybox{%
  (-2,0)*{};
  (2,0)*{};
  (0,10);(0,4) **\dir{-};
}}
\newcommand{\bbpef}[1]{\xybox{%
  (-6,0)*{};
  (6,0)*{};
  (-4,0)*{}="t1";
  (4,0)*{}="t2";
  "t1";"t2" **\crv{(-4,-6) & (4,-6)}; ?(.15)*\dir{>} ?(.9)*\dir{>} ?(.5)*\dir{}+(0,-2)*{\scriptstyle{#1}};
}}
\newcommand{\bbpfe}[1]{\xybox{%
  (-6,0)*{};
  (6,0)*{};
  (-4,0)*{}="t1";
  (4,0)*{}="t2";
  "t2";"t1" **\crv{(4,-6) & (-4,-6)}; ?(.15)*\dir{>} ?(.9)*\dir{>}
  ?(.5)*\dir{}+(0,-2)*{\scriptstyle{#1}};
}}
\newcommand{\bbcfe}[1]{\xybox{%
  (-6,0)*{};
  (6,0)*{};
  (-4,0)*{}="t1";
  (4,0)*{}="t2";
  "t1";"t2" **\crv{(-4,6) & (4,6)}; ?(.15)*\dir{>} ?(.9)*\dir{>} ?(.5)*\dir{}+(0,2)*{\scriptstyle{#1}};
}}
\newcommand{\bbcef}[1]{\xybox{%
  (-6,0)*{};
  (6,0)*{};
  (-4,0)*{}="t1";
  (4,0)*{}="t2";
  "t2";"t1" **\crv{(4,6) & (-4,6)}; ?(.15)*\dir{>} ?(.9)*\dir{>} ?(.5)*\dir{}+(0,2)*{\scriptstyle{#1}};
}}
\newcommand{\ccbub}[1]{
\xybox{%
 (-6,0)*{};
  (6,0)*{};
  (-4,0)*{}="t1";
  (4,0)*{}="t2";
  "t2";"t1" **\crv{(4,6) & (-4,6)}; ?(.05)*\dir{>} ?(1)*\dir{>};
  "t2";"t1" **\crv{(4,-6) & (-4,-6)}; ?(.3)*\dir{}+(0,0)*{\bullet}+(0,-3)*{\scs {#1}};
}}
\newcommand{\cbub}[1]{
\xybox{%
 (-6,0)*{};
  (6,0)*{};
  (-4,0)*{}="t1";
  (4,0)*{}="t2";
  "t2";"t1" **\crv{(4,6) & (-4,6)}; ?(0)*\dir{<} ?(.95)*\dir{<};
  "t2";"t1" **\crv{(4,-6) & (-4,-6)}; ?(.3)*\dir{}+(0,0)*{\bullet}+(0,-3)*{\scs {#1}};
}}
\newcommand{\FEtEF}{
\xybox{%
  (-8,0)*{};
  (8,0)*{};
  (-5,5)*{}="t1f";
  (5,5)*{}="t2f";
  (-5,-5)*{}="b1f";
  (5,-5)*{}="b2f";
  (-3,0)*{}="bf";
  (3,0)*{}="bf'";
  "t1f";"bf" **\crv{(-5,1)} ?(0)*\dir{<};
  "t2f";"bf'" **\crv{(5,1)} ?(0)*\dir{>};
  "b1f";"bf" **\crv{(-5,-1)} ?(.2)*\dir{<};
  "b2f";"bf'" **\crv{(5,-1)} ?(.3)*\dir{>};
  "bf"+(0,.5);"bf'"+(0,.5) **\dir{-};
  "bf'"+(0,-.5);"bf"+(0,-.5) **\dir{-};
  }
}
\newcommand{\EFtFE}{
\xybox{%
  (-8,0)*{};
  (8,0)*{};
  (-5,5)*{}="t1f";
  (5,5)*{}="t2f";
  (-5,-5)*{}="b1f";
  (5,-5)*{}="b2f";
  (-3,0)*{}="bf";
  (3,0)*{}="bf'";
  "t1f";"bf" **\crv{(-5,1)} ?(.35)*\dir{>};
  "t2f";"bf'" **\crv{(5,1)} ?(.35)*\dir{<};
  "b1f";"bf" **\crv{(-5,-1)} ?(0)*\dir{>};
  "b2f";"bf'" **\crv{(5,-1)} ?(0)*\dir{<};
  "bf"+(0,.5);"bf'"+(0,.5) **\dir{-};
  "bf'"+(0,-.5);"bf"+(0,-.5) **\dir{-};
}}
\newcommand{\bbdl}[1]{\xybox{%
  (2,0);(0,-8) **\crv{(2,-2)&(0,-6)}; ?(.5)*\dir{>}
}}
\newcommand{\bbdlu}[1]{\xybox{%
  (2,0);(0,-8) **\crv{(2,-2)&(0,-6)}; ?(.5)*\dir{<}
}}
\newcommand{\bbdr}[1]{\xybox{%
  (-2,0);(0,-8) **\crv{(-2,-2)&(0,-6)}; ?(.5)*\dir{>}
}}
\newcommand{\bbdru}[1]{\xybox{%
  (-2,0);(0,-8) **\crv{(-2,-2)&(0,-6)}; ?(.5)*\dir{<}
}}
\title{
 Categorified quantum sl(2) and \\
 equivariant cohomology of iterated flag varieties
}
      \author{
      Aaron D.\ Lauda \\
      { \sl \small Department of Mathematics,}\\
      { \sl \small Columbia University, New York, NY 10027, USA}
         \\
      {\tt \small email: lauda@math.columbia.edu} \\}
\begin{document}
%
% ==============================================================================

\date{August 14th, 2008}

\maketitle

\begin{abstract}
A 2-category was introduced in
\href{http://arXiv.org/abs/0803.3652}{math.QA/0803.3652} that categorifies
Lusztig's integral version of quantum sl(2). Here we construct for each positive
integer $N$ a representation of this 2-category using the equivariant cohomology
of iterated flag varieties. This representation categorifies the irreducible
$(N+1)$-dimensional representation of quantum sl(2).
\end{abstract}

%\tableofcontents

% ==============================================================================
%
\section{Introduction}
%
% ==============================================================================
The quantum group $\Uq$ is the associative algebra (with unit) over $\Q(q)$ with
generators $E$, $F$, $K$, $K^{-1}$ and relations
\begin{eqnarray}
  KK^{-1}=&1&=K^{-1}K, \label{eq_UqI}\\
  KE &=& q^2EK, \\
  KF&=&q^{-2}FK, \\
  EF-FE&=&\frac{K-K^{-1}}{q-q^{-1}}. \label{eq_UqIV}
\end{eqnarray}
Lusztig's $\U$ is the nonunital algebra obtained from $\Uq$ by adjoining a
collection of orthogonal idempotents $1_n$ indexed by the weight lattice of
$\mathfrak{sl}_2$ (see for example~\cite{Lus1}). This decomposes the algebra $\U$
into a direct sum $\bigoplus_{n,m \in \Z}1_m \U 1_n$.  In $\U$ the elements $E$
and $F$ of $\Uq$ become the collection of elements $1_{n+2}E1_{n}$ and
$1_{n}F1_{n+2}$, for $n \in \Z$.  We will simplify notation and write $E1_{n}$
and $F1_{n+2}$, with it understood that $E$ increases the subscript by 2 and $F$
decreases the subscript by 2 going from right to left. The integral form $\UA$ of
$\U$ is the $\Z[q,q^{-1}]$-subalgebra of $\U$ generated by products of divided
powers $E^{(a)}1_n :=\frac{E^a}{[a]!}1_n$ and $F^{(a)}1_n :=\frac{F^a}{[a]!}1_n$,
where $[a]!$ denotes the quantum factorial $[a]!=[a][a-1] \dots [2][1]$ with
$[m]:=\frac{q^m-q^{-m}}{q-q^{-1}}$.  Algebra $\UA$ has a canonical basis in which
the structure constants are in $\N[q,q^{-1}]$.

In a recent paper~\cite{Lau3}, the author categorified $\UA$. A 2-category
$\UcatD$ was introduced whose split Grothendieck ring $K_0(\UcatD)$ is isomorphic
to the algebra $\UA$.  Objects of $\UcatD$ correspond to weights of
$\mathfrak{sl}_2$.  Morphisms from $n$ to $m$ consist of formal direct sums of
1-morphisms of the form
\begin{equation}
 \onem\cal{E}^{\alpha_1} \cal{F}^{\beta_1}\cal{E}^{\alpha_2} \cdots
 \cal{F}^{\beta_{k-1}}\cal{E}^{\alpha_k}\cal{F}^{\beta_k}\onen\{s\}
 \end{equation}
where $m = n+2(\sum\alpha_i-\sum\beta_i)$, and $s \in \Z$.  The shift $\{s\}$
lifts the $\Z[q,q^{-1}]$-module structure of $\UA$, that is, for a 1-morphism $x$
of $\UcatD$ we have $[x\{s\}]=q^s [x]$ in $K_0(\UcatD)$.  It was shown in
\cite{Lau3} that isomorphism classes of indecomposable 1-morphisms of $\UcatD$
bijectively correspond to elements in Lusztig's canonical basis, so that every
morphism in $\UcatD$ decomposes into a direct sum of shifted copies of morphisms
lifting Lusztig's canonical basis. The idea for such a categorification goes back
to the work of Crane and Frenkel \cite{CF,Fren}.

The 2-category $\UcatD$ was shown to have a number of other desirable properties.
For example, all morphisms in $\UcatD$ have both left and right adjoints, and
there is a natural enriched hom that associates a graded abelian group to each
pair of morphisms in $\UcatD$; the graded rank of this abelian group categorifies
a semilinear form defined on $\UA$. Furthermore, the morphisms $\cal{E}^a\onen$
and $\cal{F}^a\onen$ lifting the elements $E^a1_n$ and $F^a1_n$ in $\U$ admit an
action by the nilHecke algebra $\BNC_a$. By this we mean that the abelian group
of 2-morphisms $\UcatD(\cal{E}^a\onen,\cal{E}^a\onen)$ and
$\UcatD(\cal{F}^a\onen,\cal{F}^a\onen)$ are modules over $\BNC_a$. The nilHecke
algebra is the $\Bbbk$-algebra with unit generated by $u_i$ for $1 \leq i < a$,
and pairwise commuting elements $\chi_i$, for $1 \leq i \leq a$. The generators
satisfy the relations
\[
 \begin{array}{lcl}
   u_i^2 = 0 \quad \text{($1 \leq i <a)$},
    & \quad &  u_i\chi_j = \chi_ju_i \quad \text{if $|i-j|>1$}, \\
   u_iu_{i+1}u_i = u_{i+1}u_iu_{i+1} \quad \text{$(1 \leq i <a-1)$},
   & \quad & u_i\chi_i = 1+\chi_{i+1}u_i \quad \text{($1 \leq i
  <a$)},  \\
   u_iu_j = u_ju_i \quad \text{if $|i-j|>1$},
   & \quad  &  \chi_iu_i = 1 + u_i \chi_{i+1} \quad \text{($1 \leq i \nn
  <a$)}.
 \end{array}
\]

The generators $\chi_i\in \BNC_a$ span a polynomial algebra $\Bbbk[\chi_1,\chi_2,
\ldots, \chi_a]$. The 2-morphisms in $\UcatD$ that are acted on by these $\chi_i$
are denoted by $z_{n+2(a-i)} \maps \cal{E}^a\onen \to \cal{E}^a\onen$. The
subscripts are needed for compatibility with $\cal{E}\onen$ increasing the
subscript by 2.

Using the ordinary cohomology rings of iterated flag varieties, a representation
$\Gamma_N\maps \UcatD \to \Gr$ was also constructed in \cite{Lau3}.  The
representation $\Gamma_N$ is a 2-functor mapping $\UcatD$ into a 2-category
$\Gr$. The 2-category $\Gr$ has as objects the graded cohomology rings of certain Grassmannian
varieties.  The morphisms are graded bimodules obtained from the cohomology rings of
partial flag varieties (iterated flag varieties), and the 2-morphisms are
degree zero bimodule maps.  It was already known that these iterated flag varieties
categorify irreducible representations of $\Uq$ \cite{CR,BLM,FKS}.  In
\cite{Lau3} the author showed that the 2-category $\UcatD$, categorifying
Lusztig's $\U$, acts on the 2-category $\Gr$, revealing new structure not
previously observed.

The cohomology rings of Grassmannians and iterated flag varieties are described
in terms of Chern classes of naturally associated tautological bundles. For
example, fixing a complex vector space $W$ of dimension $N$, the tautological
$k$-dimensional complex vector bundle $U_{k,N}$ on the Grassmannian $Gr(k,N)$
of complex $k$-planes in $W$ has total space consisting of pairs $(V,x)$
with $x\in V$ and $V \in Gr(k,N)$. Choose a hermitian metric on $W$. From the
orthogonal complements of the fibres $V$ of the bundle $U_{k,N}$ we can construct
an $(N-k)$-dimensional complex vector bundle $U_{N-k,N}$ with the property that
\[
 U_{k,N} \oplus U_{N-k,N} \cong I^{N},
\]
with $I$ the trivial rank 1 bundle.  The Chern classes, $x_{i}:=x_i(U_{k,N}) \in
H^{2i}(Gr(k,N))$ for $1\leq i\leq k$, and $y_{j}:=y_j(U_{N-k,N}) \in
H^{2j}(Gr(N-k,N)$ for $1\leq j \leq N-k$, then satisfy
\begin{equation} \label{eq_chern_rel}
 \left(1 +x_1t + x_2 t^2\cdots +x_k t^k\right)
  \left(1+y_1 t +y_2 t^2 +\cdots + y_{N-k}t^{N-k}\right) =1
\end{equation}
by the Whitney sum axiom for Chern classes.  Above, $t$ is a formal variable used
to keep track of homogeneous elements. Borel~\cite{Borel} showed that the
cohomology ring of this variety is given by
\begin{equation}
H^*(Gr(k,N)) = \Q[x_1,\ldots,x_k,y_1,\ldots,y_{N-k}]/I_{k,N}
\end{equation}
where $I_{k,N}$ is the ideal generated by the homogeneous terms in
\eqref{eq_chern_rel}.

Here we define new representations of $\UcatD$ using the equivariant cohomology
of iterated flag varieties.  Just like the ordinary cohomology rings, the
equivariant cohomology rings are generated by Chern classes of naturally
associated bundles.  But there are no relations on the equivariant cohomology
rings; the equivariant cohomology of the Grassmannian variety $Gr(k,N)$ described
above is given by
\begin{equation}
H^*_{GL(N)}(Gr(k,N)) = \Q[x_1,\ldots,x_k,y_1,\ldots,y_{N-k}],
\end{equation}
the relations given by the homogeneous terms of \eqref{eq_chern_rel} are no
longer imposed.  From the equivariant cohomology of iterated flag varieties we
construct a 2-category $\GrG$ and a representation $\Gamma^{G}_N\maps\UcatD \to
\GrG$ for each positive integer $N$.  In Theorem~\ref{thm_catVN} we show that
$\Gamma^G_N$ also categorifies the irreducible $(N+1)$-dimensional representation
of $\U$.

\bigskip

The nilHecke algebra $\BNC_a$ is an infinite dimensional algebra.  However, the
representations $\Gamma_N \maps \UcatD \to \Gr$ constructed from ordinary
cohomology rings map $\BNC_a$ to finite-dimensional quotients.  The 2-morphism
$z_{n+2(a-i)}$ acted on by the polynomial subalgebra $\Bbbk[\chi_1,\chi_2, \ldots,
\chi_a]$ of $\BNC_a$ are mapped by $\Gamma_N$ to nilpotent 2-morphisms in $\Gr$.
In particular, $\Gamma_N(z_{n+2(a-i)})^{N}=0$ for all $1 \leq i \leq a$. A
similar nilpotent action is built into the definition of the
$\mathfrak{sl}_2$-categorifications introduced by Chuang and Rouquier~\cite{CR}.
By restricting their attention to $\mathfrak{sl}_2$-categorifications that admit
an action by the affine or degenerate affine Hecke algebra they are able to
obtain a beautiful classification for such categorified representations of
$\mathfrak{sl}_2$.

The importance of the equivariant representations $\Gamma_N^G\maps \UcatD \to
\GrG$ introduced here is that they provide $\Uq$-categorifications where the
2-morphisms $ z_{n+2(a-i)} \maps \cal{E}^a\onen \to \cal{E}^a\onen$ {\em do not}
act nilpotently. These representations admit a faithful action of the nilHecke
algebra, while still categorifying the irreducible $(N+1)$-dimensional
representation of $\Uq$.  Thus, the representations $\GrG$ provide new insights
about what to expect from the categorified representation theory of $\Uq$.

The existence of these new 2-categorical representations $\GrG$ also serves to
reinforce the choice of axioms for the 2-category $\UcatD$.  The 2-category $\Gr$
built from the cohomology of iterated flag varieties, and the 2-category $\GrG$
built from equivariant cohomology of these varieties, both satisfy all the
relations on 2-morphisms required for a representation of $\UcatD$. This
demonstrates the robustness of the 2-category $\UcatD$.

\paragraph{Acknowledgements:} I would like to thank Mikhail Khovanov for suggesting this
project and for his helpful input.

% ==============================================================================
%
\section{The 2-category $\UcatD$}
%
% ==============================================================================

The 2-category $\UcatD$ was constructed in \cite{Lau3}, where it was shown that
the split Grothendieck ring $K_0(\UcatD)$ is isomorphic to the algebra $\UA$ ---
Lusztig's integral version of $\Uq$.  The 2-category $\UcatD$ is the Karoubian
envelope of a 2-category $\Ucat$ that is the restriction of a graded additive
2-category $\Ucatq$ to the degree preserving 2-morphisms.  We recall the
definition here.  The reader is referred to \cite{Lau3} for more details.

%------------------------------------------------------------------------------
%
\subsection{The 2-category $\Ucatq$ } \label{subsec_Ucat}
%
% ------------------------------------------------------------------------------

The graded additive 2-category with translation $\Ucatq$ consists of
\begin{itemize}
  \item objects: $\bfit{n}$ for $n \in \Z$ ,
  \item 1-morphisms: formal direct sums of composites of
  \[
 \onem \cal{E}^{\alpha_1} \cal{F}^{\beta_1}\cal{E}^{\alpha_2} \cdots
 \cal{F}^{\beta_{k-1}}\cal{E}^{\alpha_k}\cal{F}^{\beta_k}\onen\{s\}
  \]
where some of the $\alpha_i$ and $\beta_j$ may be zero, $m =
n+2(\sum\alpha_i-\sum\beta_i)$, and $s \in \Z$.
  \item graded 2-morphisms\footnote{Diagrams are read from bottom to top and from right to left.
The morphism $\cal{E}$ is represented by an upward pointing arrow, and $\cal{F}$
by a downward pointing arrow. For example, $U_n \maps \cal{E}\cal{E}\onen \To
\cal{E}\cal{E}\onen \maps n \to n+4$.}
\[
\begin{array}{cccc}
 z_n & \hat{z}_n & U_n & \hat{U}_n  \\ \\
  \xy
 (0,8);(0,-8); **\dir{-} ?(.75)*\dir{>}+(2.3,0)*{\scriptstyle{}};
 (0,0)*{\txt\large{$\bullet$}};
 (4,-3)*{ \bfit{n}};
 (-6,-3)*{ \bfit{n+2}};
 (-10,0)*{};(10,0)*{};
 \endxy
  &
  \xy
 (0,8);(0,-8); **\dir{-} ?(.75)*\dir{<}+(2.3,0)*{\scriptstyle{}};
 (0,0)*{\txt\large{$\bullet$}};
 (6,-3)*{ \bfit{n+2}};
 (-4,-3)*{ \bfit{n}};
 (-10,0)*{};(10,0)*{};
 \endxy
  &    \xy 0;/r.2pc/:
    (0,0)*{\twoIu};
    (6,0)*{ \bfit{n}};
    (-8,0)*{ \bfit{n+4}};
    (-18,0)*{};(18,0)*{};
    \endxy
  &
   \xy 0;/r.2pc/:
    (0,0)*{\twoId};
    (8,0)*{ \bfit{n+4}};
    (-6,0)*{ \bfit{n}};
    (-14,0)*{};(14,0)*{};
    \endxy
\\ \\
   \;\; \text{ {\rm deg} 2}\;\;
 & \;\;\text{ {\rm deg} 2}\;\;
 & \;\;\text{ {\rm deg} -2}\;\;
  & \;\;\text{ {\rm deg} -2}\;\;
\end{array}
\]
\[
\begin{array}{ccccc}
 \eta_n & \hat{\varepsilon}_n & \hat{\eta}_n &
 \varepsilon_n \\ \\
    \xy
    (0,-3)*{\bbpef{}};
    (8,-5)*{ \bfit{n}};
    (-4,3)*{\scs \cal{F}};
    (4,3)*{\scs \cal{E}};
    (-12,0)*{};(12,0)*{};
    \endxy
  & \xy
    (0,-3)*{\bbpfe{}};
    (8,-5)*{ \bfit{n}};
    (-4,3)*{\scs \cal{E}};
    (4,3)*{\scs \cal{F}};
    (-12,0)*{};(12,0)*{};
    \endxy
  & \xy
    (0,0)*{\bbcef{}};
    (8,5)*{ \bfit{n}};
    (-4,-6)*{\scs \cal{F}};
    (4,-6)*{\scs \cal{E}};
    (-12,0)*{};(12,0)*{};
    \endxy
  & \xy
    (0,0)*{\bbcfe{}};
    (8,5)*{ \bfit{n}};
    (-4,-6)*{\scs \cal{E}};
    (4,-6)*{\scs \cal{F}};
    (-12,0)*{};(12,0)*{};
    \endxy\\ \\
  \;\;\text{ {\rm deg} n+1}\;\;
 & \;\;\text{ {\rm deg} 1-n}\;\;
 & \;\;\text{ {\rm deg} n+1}\;\;
 & \;\;\text{ {\rm deg} 1-n}\;\;
\end{array}
\]
together with identity 2-morphisms and isomorphisms $x \simeq x\{s\}$ for each
1-morphism $x$, such that:
  \item  $\mathbf{1}_{n+2}\cal{E}\onen$ and $\onen\cal{F}\mathbf{1}_{n+2}$ are
biadjoints with units and counits given by the pairs $(\eta_n,\varepsilon_{n+2})$
and $(\hat{\varepsilon}_{n},\hat{\eta}_{n-2})$.
   \item All 2-morphisms are cyclic with respect to the above biadjoint
   structure.  This is ensured by the relations,
   \begin{equation}
    \xy 0;/r.17pc/:
    (-8,5)*{}="1";
    (0,5)*{}="2";
    (0,-5)*{}="2'";
    (8,-5)*{}="3";
    (-8,-10);"1" **\dir{-};
    "2";"2'" **\dir{-} ?(.5)*\dir{<};
    "1";"2" **\crv{(-8,12) & (0,12)} ?(0)*\dir{<};
    "2'";"3" **\crv{(0,-12) & (8,-12)}?(1)*\dir{<};
    "3"; (8,10) **\dir{-};
    (15,-9)*{ \bfit{n+2}};
    (-12,9)*{ \bfit{n}};
    (0,4)*{\txt\large{$\bullet$}};
    \endxy
    \quad =
    \quad
       \xy 0;/r.17pc/:
    (-8,0)*{}="1";
    (0,0)*{}="2";
    (8,0)*{}="3";
    (0,-10);(0,10)**\dir{-} ?(.5)*\dir{<};
    (8,5)*{ \bfit{n+2}};
    (-6,5)*{ \bfit{n}};
    (0,4)*{\txt\large{$\bullet$}};
    \endxy
    \quad =
    \quad
    \xy 0;/r.17pc/:
    (8,5)*{}="1";
    (0,5)*{}="2";
    (0,-5)*{}="2'";
    (-8,-5)*{}="3";
    (8,-10);"1" **\dir{-};
    "2";"2'" **\dir{-} ?(.5)*\dir{<};
    "1";"2" **\crv{(8,12) & (0,12)} ?(0)*\dir{<};
    "2'";"3" **\crv{(0,-12) & (-8,-12)}?(1)*\dir{<};
    "3"; (-8,10) **\dir{-};
    (15,-9)*{ \bfit{n+2}};
    (-12,9)*{ \bfit{n}};
    (0,4)*{\txt\large{$\bullet$}};
    \endxy
\end{equation}
   \begin{equation}
    \xy 0;/r.17pc/:
    (-9,8)*{}="1";
    (-3,8)*{}="2";
    (-9,-16);"1" **\dir{-};
    "1";"2" **\crv{(-9,14) & (-3,14)} ?(0)*\dir{<};
    (9,-8)*{}="1";
    (3,-8)*{}="2";
    (9,16);"1" **\dir{-};
    "1";"2" **\crv{(9,-14) & (3,-14)} ?(1)*\dir{>} ?(.05)*\dir{>};
    (-15,8)*{}="1";
    (3,8)*{}="2";
    (-15,-16);"1" **\dir{-};
    "1";"2" **\crv{(-15,20) & (3,20)} ?(0)*\dir{>};
    (15,-8)*{}="1";
    (-3,-8)*{}="2";
    (15,16);"1" **\dir{-};
    "1";"2" **\crv{(15,-20) & (-3,-20)} ?(.03)*\dir{>}?(1)*\dir{>};
    (0,0)*{\twoIu};
    (24,-9)*{ \bfit{n+4}};
    (-20,9)*{ \bfit{n}};
    \endxy
    \quad =
    \quad
       \xy 0;/r.17pc/:
    (0,0)*{\twoId};
    (8,0)*{ \bfit{n+4}};
    (-6,0)*{ \bfit{n}};
    \endxy
    \quad =
    \quad
    \xy 0;/r.17pc/:
    (9,8)*{}="1";
    (3,8)*{}="2";
    (9,-16);"1" **\dir{-};
    "1";"2" **\crv{(9,14) & (3,14)} ?(0)*\dir{<};
    (-9,-8)*{}="1";
    (-3,-8)*{}="2";
    (-9,16);"1" **\dir{-};
    "1";"2" **\crv{(-9,-14) & (-3,-14)} ?(1)*\dir{>} ?(.05)*\dir{>};
    (15,8)*{}="1";
    (-3,8)*{}="2";
    (15,-16);"1" **\dir{-};
    "1";"2" **\crv{(15,20) & (-3,20)} ?(0)*\dir{<};
    (-15,-8)*{}="1";
    (3,-8)*{}="2";
    (-15,16);"1" **\dir{-};
    "1";"2" **\crv{(-15,-20) & (3,-20)} ?(.03)*\dir{>}?(1)*\dir{>};
    (0,0)*{\twoIu};
    (24,-9)*{ \bfit{n+4}};
    (-20,9)*{ \bfit{n}};
    \endxy
\end{equation}
The cyclic conditions on 2-morphisms implies that boundary preserving planar
isotopies of diagrams result in the same 2-morphism.
  \item All dotted closed bubbles of negative degree are zero. That is,
  \[
\xy
 (-12,0)*{\cbub{m}};
 (-8,8)*{\bfit{n}};
 \endxy \;=\;0, \quad \text{if $m< n-1$}, \qquad \quad
  \xy
 (-12,0)*{\ccbub{m}};
 (-8,8)*{\bfit{n}};
 \endxy \;=\;0, \quad \text{if $m< -n-1$,}
  \]
for all $m \in \Z_+$ and $n \in \Z$. Above we use the shorthand
\[
\xy 0;/r.19pc/:
    (-8,0)*{}="1";
    (0,0)*{}="2";
    (8,0)*{}="3";
    (0,-8);(0,8)**\dir{-} ?(.5)*\dir{>};
    (-7,-5)*{ \bfit{n+2}};
    (5,-5)*{ \bfit{n}};
    (0,4)*{\txt\large{$\bullet$}}+(3,0)*{\scs m};
    \endxy
    \quad = \quad
  \left(\xy 0;/r.19pc/:
    (-8,0)*{}="1";
    (0,0)*{}="2";
    (8,0)*{}="3";
    (0,-8);(0,8)**\dir{-} ?(.5)*\dir{>};
    (-7,-5)*{ \bfit{n+2}};
    (5,-5)*{ \bfit{n}};
    (0,4)*{\txt\large{$\bullet$}};
    \endxy \right)^m
\]
to denote iterated composites of the 2-morphisms $z_n$.
 \item The nilHecke algebra $\BNC_a$ acts on $\Ucatq(\cal{E}^{a}\onen,\cal{E}^{a}\onen)$ and
$\Ucatq(\cal{F}^{a}\onen,\cal{F}^{a}\onen)$ for all $n \in \Z$.
\begin{eqnarray}\label{eq_Nil_nilpotent}
  \xy 0;/r.18pc/:
  (0,-8)*{\twoIu};
  (0,8)*{\twoIu};
  (8,8)*{\bfit{n}};
 \endxy
 \qquad = \qquad 0
\end{eqnarray}
\begin{eqnarray} \label{eq_Nil_II}
  \xy 0;/r.18pc/:
  (3,9);(3,-9) **\dir{-}?(.5)*\dir{<}+(2.3,0)*{};
  (-3,9);(-3,-9) **\dir{-}?(.5)*\dir{<}+(2.3,0)*{};
  (8,2)*{\bfit{n}};
 \endxy
 \quad = \quad
  \xy 0;/r.18pc/:
  (0,0)*{\twoIu};
  (-2,-5)*{ \bullet};
  (8,2)*{\bfit{n}};
 \endxy
 \;\; - \;\;
  \xy 0;/r.18pc/:
  (0,0)*{\twoIu};
  (2,5)*{ \bullet};
  (8,2)*{\bfit{n}};
 \endxy
 \quad = \quad
  \xy 0;/r.18pc/:
  (0,0)*{\twoIu};
  (-2,5)*{ \bullet};
  (8,2)*{\bfit{n}};
 \endxy
 \;\; - \;\;
  \xy 0;/r.18pc/:
  (0,0)*{\twoIu};
  (2,-5)*{ \bullet};
  (8,2)*{\bfit{n}};
 \endxy
\end{eqnarray}
\begin{eqnarray}\label{eq_Nil_ReidemeisterIII}
 \vcenter{ \xy 0;/r.16pc/:
    (0,0)*{\twoIu};
    (6,16)*{\twoIu};
    (-3,8);(-3,24) **\dir{-}?(1)*\dir{>};
    (0,32)*{\twoIu};
    (9,-8);(9,8) **\dir{-};
    (9,24);(9,42) **\dir{-}?(1)*\dir{>};
    (14,16)*{\bfit{n}};
 \endxy}
 \quad
 =
 \quad
  \vcenter{\xy 0;/r.16pc/:
    (0,0)*{\twoIu};
    (-6,16)*{\twoIu};
    (3,8);(3,24) **\dir{-}?(1)*\dir{>};
    (0,32)*{\twoIu};
    (-9,-8);(-9,8) **\dir{-};
    (-9,24);(-9,42) **\dir{-}?(1)*\dir{>};
    (8,16)*{\bfit{n}};
 \endxy}
\end{eqnarray}
for all values of $n \in Z$.  The action on
$\Ucatq(\cal{F}^{a}\onen,\cal{F}^{a}\onen)$ is obtained from the above relations
using biadjointness.

\item The 1-morphisms $\cal{E}$ and $\cal{F}$ lift the relations of $E$ and $F$
in $\Uq$. This is ensured by requiring the equalities
\begin{eqnarray} \label{eq_reduction}
  \text{$\xy 0;/r.17pc/:
  (14,8)*{\bfit{n}};
  (0,0)*{\twoIu};
  (-3,-12)*{\bbsid};
  (-3,8)*{\bbsid};
  (3,8)*{}="t1";
  (9,8)*{}="t2";
  (3,-8)*{}="t1'";
  (9,-8)*{}="t2'";
   "t1";"t2" **\crv{(3,14) & (9, 14)};
   "t1'";"t2'" **\crv{(3,-14) & (9, -14)};
   (9,0)*{\bbf{}};
 \endxy$} \;\; = \;\; -\sum_{\ell=0}^{-n}
   \xy
  (14,8)*{\bfit{n}};
  (0,0)*{\bbe{}};
  (12,-2)*{\cbub{n-1+\ell}};
  (0,6)*{\bullet}+(5,-1)*{\scs -n-\ell};
 \endxy
\qquad \qquad
  \text{$ \xy 0;/r.17pc/:
  (-12,8)*{\bfit{n}};
  (0,0)*{\twoIu};
  (3,-12)*{\bbsid};
  (3,8)*{\bbsid};
  (-9,8)*{}="t1";
  (-3,8)*{}="t2";
  (-9,-8)*{}="t1'";
  (-3,-8)*{}="t2'";
   "t1";"t2" **\crv{(-9,14) & (-3, 14)};
   "t1'";"t2'" **\crv{(-9,-14) & (-3, -14)};
   (-9,0)*{\bbf{}};
 \endxy$} \;\; = \;\;
 \sum_{j=0}^{n}
   \xy
  (-12,8)*{\bfit{n}};
  (0,0)*{\bbe{}};
  (-12,-2)*{\ccbub{-n-1+j}};
  (0,6)*{\bullet}+(5,-1)*{\scs n-j};
 \endxy
\end{eqnarray}
\begin{eqnarray}
 \vcenter{\xy 0;/r.17pc/:
  (-8,0)*{};
  (8,0)*{};
  (-4,10)*{}="t1";
  (4,10)*{}="t2";
  (-4,-10)*{}="b1";
  (4,-10)*{}="b2";
  "t1";"b1" **\dir{-} ?(.5)*\dir{<};
  "t2";"b2" **\dir{-} ?(.5)*\dir{>};
  (10,2)*{\bfit{n}};
  (-10,2)*{\bfit{n}};
  \endxy}
&\quad = \quad&
 -\;\;
 \vcenter{\xy 0;/r.17pc/:
  (0,0)*{\FEtEF};
  (0,-10)*{\EFtFE};
  (10,2)*{\bfit{n}};
  (-10,2)*{\bfit{n}};
  \endxy}
  \quad + \quad
   \sum_{\ell=0}^{n-1} \sum_{j=0}^{\ell}
    \vcenter{\xy 0;/r.17pc/:
    (-10,10)*{\bfit{n}};
    (-8,0)*{};
  (8,0)*{};
  (-4,-15)*{}="b1";
  (4,-15)*{}="b2";
  "b2";"b1" **\crv{(5,-8) & (-5,-8)}; ?(.1)*\dir{<} ?(.9)*\dir{<}
  ?(.8)*\dir{}+(0,-.1)*{\bullet}+(-5,2)*{\scs \ell-j};
  (-4,15)*{}="t1";
  (4,15)*{}="t2";
  "t2";"t1" **\crv{(5,8) & (-5,8)}; ?(.08)*\dir{>} ?(.97)*\dir{>}
  ?(.4)*\dir{}+(0,-.2)*{\bullet}+(3,-2)*{\scs n-1-\ell};
  (0,0)*{\ccbub{\scs -n-1+j}};
  \endxy} \nn
 \\
 \vcenter{\xy 0;/r.17pc/:
  (-8,0)*{};
  (8,0)*{};
  (-4,10)*{}="t1";
  (4,10)*{}="t2";
  (-4,-10)*{}="b1";
  (4,-10)*{}="b2";
  "t1";"b1" **\dir{-} ?(.5)*\dir{>};
  "t2";"b2" **\dir{-} ?(.5)*\dir{<};
  (10,2)*{\bfit{n}};
  (-10,2)*{\bfit{n}};
  \endxy}
&\quad = \quad&
 -\;\;
 \vcenter{\xy 0;/r.17pc/:
  (0,0)*{\EFtFE};
  (0,-10)*{\FEtEF};
  (10,2)*{\bfit{n}};
  (-10,2)*{\bfit{n}};
  \endxy}
  \quad + \quad
\sum_{\ell=0}^{-n-1} \sum_{j=0}^{\ell}
    \vcenter{\xy 0;/r.17pc/:
    (-8,0)*{};
  (8,0)*{};
  (-4,-15)*{}="b1";
  (4,-15)*{}="b2";
  "b2";"b1" **\crv{(5,-8) & (-5,-8)}; ?(.1)*\dir{>} ?(.95)*\dir{>}
  ?(.8)*\dir{}+(0,-.1)*{\bullet}+(-5,2)*{\scs \ell-j};
  (-4,15)*{}="t1";
  (4,15)*{}="t2";
  "t2";"t1" **\crv{(5,8) & (-5,8)}; ?(.1)*\dir{<} ?(.9)*\dir{<}
  ?(.4)*\dir{}+(0,-.2)*{\bullet}+(3,-2)*{\scs -n-1-\ell};
  (0,0)*{\cbub{\scs n-1+j}};
  (-10,10)*{\bfit{n}};
  \endxy} \label{eq_ident_decomp}
\end{eqnarray}
for all $n \in \Z$, where the label next to the bullet indicates the number of
composites of the 2-morphisms $z_n$ and $\hat{z}_n$. In the above equations, and
throughout this paper, all summations are increasing sums, or else they are taken
to be zero. This means that in \eqref{eq_reduction} the right--hand--side of the
first equation is zero when $n>0$, and the right--hand--side of the second
equation is zero when $n<0$.
\end{itemize}

Notice that for some values of $n$ the dotted bubbles appearing in
\eqref{eq_reduction} and \eqref{eq_ident_decomp} have negative labels. A
composite of $z_n$ or $\hat{z}_n$ with itself a negative number of times does not
make sense.  These dotted bubbles with negative labels, called {\em fake
bubbles}, are formal symbols inductively defined by the equations
\[
\xy 0;/r.18pc/:
 (0,0)*{\cbub{n-1}{}};
  (4,8)*{n};
 \endxy
 \quad = \quad
  \xy 0;/r.18pc/:
 (0,0)*{\ccbub{n-1}{}};
  (4,8)*{n};
 \endxy
  \quad = \quad 1 ,
\]
\begin{center}
 \makebox[0pt]{ $
\left( \xy 0;/r.16pc/:
 (0,0)*{\ccbub{-n-1}};
  (4,8)*{\bfit{n}};
 \endxy
 +
 \xy 0;/r.16pc/:
 (0,0)*{\ccbub{-n-1+1}};
  (4,8)*{\bfit{n}};
 \endxy t
 +\xy 0;/r.16pc/:
 (0,0)*{\ccbub{-n-1+2}};
  (4,8)*{\bfit{n}};
 \endxy t^2
 + \cdots +
 \xy 0;/r.16pc/:
 (0,0)*{\ccbub{-n-1+j}};
  (4,8)*{\bfit{n}};
 \endxy t^j
 + \cdots
\right)
%%%%
\left( \xy 0;/r.16pc/:
 (0,0)*{\cbub{n-1}};
  (4,8)*{\bfit{n}};
 \endxy
 +
 \xy 0;/r.16pc/:
 (0,0)*{\cbub{n-1+1}};
  (4,8)*{\bfit{n}};
 \endxy t
 + \cdots +
 \xy 0;/r.16pc/:
 (0,0)*{\cbub{n-1+j}};
 (4,8)*{\bfit{n}};
 \endxy t^j
 + \cdots
\right)  =1.$ }
\end{center}
Although the labels are negative, one can check that the overall degree of each
fake bubble is still positive, so that these fake bubbles do not violate the
positivity of dotted bubble axiom.  See \cite{Lau3} for more details.

% ------------------------------------------------------------------------------
%
\subsection{Karoubi envelope and $\UcatD$}
%
% ------------------------------------------------------------------------------

The Karoubi envelope $Kar(\cal{C})$ of a category $\cal{C}$ is an enlargement of
the category $\cal{C}$ in which all idempotents split.  There is a fully faithful
functor $\cal{C} \to Kar(\cal{C})$ that is universal with respect to functors
that split idempotents in $\cal{C}$.  This means that if $F\maps \cal{C} \to
\cal{D}$ is any functor where all idempotents split in $\cal{D}$, then $F$
extends uniquely (up to isomorphism) to a functor $\tilde{F} \maps Kar(\cal{C})
\to \cal{D}$ (see for example \cite{Bor}, Proposition 6.5.9).  Furthermore, for
any other functor $G \maps \cal{C} \to \cal{D}$ and a natural transformation
$\alpha \maps F \To G$, $\alpha$ extends uniquely to a natural transformation
$\tilde{\alpha} \maps \tilde{F}\To\tilde{G}$.

Let $\Ucat$ denote the restriction of $\Ucatq$ to those 2-morphisms that are
degree preserving.

\begin{defn}
Define the 2-category $\UcatD$ to have the same objects as $\Ucat$ and
$\UcatD(\bfit{n},\bfit{m}) = Kar\left(\Ucat(\bfit{n},\bfit{m})\right)$. The
fully-faithful functors $\Ucat(\bfit{n},\bfit{m}) \to \UcatD(\bfit{n},\bfit{m})$
combine to form a 2-functor $\Ucat \to \UcatD$ universal with respect to
splitting idempotents in the hom categories $\UcatD(\bfit{n},\bfit{m})$.  The
composition functor $\UcatD(\bfit{n},\bfit{m}) \times \UcatD(\bfit{m},\bfit{p})
\to \UcatD(\bfit{n},\bfit{p})$ is induced by the universal property of the
Karoubi envelope from the composition functor for $\Ucat$.
\end{defn}

\parpic[r]{$
 \xymatrix{
  \Ucatq \ar[dr]_{\Psi} \ar[r]^{} & \UcatD \ar@{.>}[d]^{\tilde{\Psi}} \\
  & \cal{K}
 }
$}Let $\cal{K}$ be a 2-category in which idempotents split. Any 2-functor
$\Psi\maps\Ucatq \to \cal{K}$ extends uniquely (up to isomorphism) to a
representation $\tilde{\Psi}\maps \UcatD \to \cal{K}$.  The 2-functor
$\tilde{\Psi}$ is obtained from $\Psi$ by restricting to the degree preserving
2-morphisms of $\Ucatq$ and using the universal property of the Karoubi envelope.

\begin{thm}[Theorem 9.1.3 \cite{Lau3}]  \label{thm_Groth}
The split Grothendieck ring $K_0(\UcatD)$ is isomorphic to Lusztig's $\UA$.
\end{thm}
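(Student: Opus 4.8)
\medskip
\noindent The plan is to construct an explicit $\Z[q,q^{-1}]$-algebra homomorphism $\gamma\maps \UA \to K_0(\UcatD)$, show it is surjective by a generation argument, and show it is injective by transporting classes along the flag-variety representations $\Gamma_N$.

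\emph{Constructing $\gamma$.} First I would recall that $\BNC_a$ acts on $\Ucatq(\cal{E}^a\onen,\cal{E}^a\onen)$ and that $\BNC_a$ is a graded matrix algebra of size $a!$ over the symmetric polynomials $\Bbbk[\chi_1,\dots,\chi_a]^{S_a}$; splitting the image of a primitive idempotent (built from the longest word in the $u_i$ and the monomial $\chi_1^{a-1}\chi_2^{a-2}\cdots\chi_{a-1}$) in the Karoubi envelope produces the divided-power 1-morphism $\cal{E}^{(a)}\onen$, and the matrix description gives that $\cal{E}^a\onen$ is a direct sum of $a!$ grading-shifted copies of $\cal{E}^{(a)}\onen$ of total graded rank $[a]!$; likewise for $\cal{F}^{(a)}\onen$. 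I then set $\gamma(1_n)=[\onen]$, $\gamma(E^{(a)}1_n)=[\cal{E}^{(a)}\onen]$, $\gamma(F^{(a)}1_n)=[\cal{F}^{(a)}\onen]$, with the $\Z[q,q^{-1}]$-structure given by $q^s[x]=[x\{s\}]$. To check that $\gamma$ is a well-defined algebra map I would verify that each defining relation of $\UA$ is realized by a direct-sum decomposition of 1-morphisms in $\UcatD$: the idempotent and weight-shift relations hold because $\UcatD$ is graded by its objects and $\cal{E}^{(a)}$ raises the object label by $2a$; the divided-power relation $E^{(a)}E^{(b)}1_n=\binom{a+b}{a}E^{(a+b)}1_n$ and its $\cal{F}$-analogue follow from the nilHecke decompositions $\cal{E}^{(a)}\cal{E}^{(b)}\onen\cong\bigoplus_{\binom{a+b}{a}}\cal{E}^{(a+b)}\onen$; and Lusztig's commutation relation $E^{(a)}F^{(b)}1_n=\sum_t\binom{a-b+n}{t}F^{(b-t)}E^{(a-t)}1_n$ is realized by the decomposition of $\cal{E}^{(a)}\cal{F}^{(b)}\onen$, whose $a=b=1$ case is precisely \eqref{eq_ident_decomp} and already yields $[\cal{E}\onen][\cal{F}\onen]-[\cal{F}\onen][\cal{E}\onen]=[n][\onen]$ once the grading shifts on the bubble terms are read off.

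\emph{Surjectivity.} In a split Grothendieck group $[x]=[z]-[y]$ whenever $z\cong x\oplus y$, so since every 1-morphism of $\UcatD(\bfit{n},\bfit{m})$ is, by construction of the Karoubi envelope, a direct summand of a finite direct sum of grading-shifts of composites $\cal{E}^{\alpha_1}\cal{F}^{\beta_1}\cdots\cal{E}^{\alpha_k}\cal{F}^{\beta_k}\onen$, the classes of these composites span $K_0(\UcatD)$ over $\Z[q,q^{-1}]$. Each such class is a product of quantum factorials times $\gamma$ of the corresponding monomial $E^{\alpha_1}F^{\beta_1}\cdots\onen$, which lies in $\UA$ since $E^a=[a]!E^{(a)}$; hence $\gamma$ is onto.

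\emph{Injectivity.} For each $N\ge 0$ the 2-functor $\Gamma_N\maps\UcatD\to\Gr$ built from the cohomology of iterated flag varieties makes $K_0(\UcatD)$ act, via $\gamma$, on a $\Z[q,q^{-1}]$-module $M_N$ whose $\Q(q)$-span is the irreducible $(N+1)$-dimensional representation $V_N$ of $\Uq$ and on which $\UA$ acts by its standard action on the integral form of $V_N$; that the (co)homology of iterated flag varieties categorifies these irreducibles is \cite{CR,BLM,FKS}. Since the finite-dimensional irreducibles $\{V_N\}_{N\ge 0}$ jointly separate points of $\U$, and $\UA\subset\U$, any $x\in\ker\gamma$ annihilates every $M_N$, hence every $V_N$, hence vanishes. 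So $\gamma$ is injective, and with surjectivity it is the asserted isomorphism $\UA\xrightarrow{\ \sim\ }K_0(\UcatD)$.

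\emph{Expected main obstacle.} The genuine work sits in the well-definedness step: establishing the matrix-algebra structure of $\BNC_a$ and the decompositions $\cal{E}^a\onen\cong\bigoplus_{[a]!}\cal{E}^{(a)}\onen$ and $\cal{E}^{(a)}\cal{E}^{(b)}\onen\cong\bigoplus_{\binom{a+b}{a}}\cal{E}^{(a+b)}\onen$ requires explicit idempotent computations in the nilHecke algebra together with control of their images under horizontal composition, and pinning down the full $\cal{E}^{(a)}\cal{F}^{(b)}\onen$ decomposition is a delicate calculation in the bubble-and-nilHecke calculus in which every grading shift must be tracked. A secondary point: the argument above gives only the ring isomorphism; to upgrade it to the statement that indecomposable 1-morphisms of $\UcatD$ biject with Lusztig's canonical basis --- so that the structure constants of $K_0(\UcatD)$ lie in $\N[q,q^{-1}]$ --- one must additionally compute the graded morphism spaces $\Ucatq(\cal{E}^{(a)}\cal{F}^{(b)}\onen,\cal{E}^{(a')}\cal{F}^{(b')}\onen)$, match their graded ranks with Lusztig's semilinear form, and check that $\End(\cal{E}^{(a)}\cal{F}^{(b)}\onen)$ is local in degree zero so that Krull--Schmidt holds in $\UcatD$.
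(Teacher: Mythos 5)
This paper does not actually prove Theorem~\ref{thm_Groth}; it is imported verbatim from \cite{Lau3} (Theorem 9.1.3 there). Your outline reproduces the strategy of the proof in that reference --- divided powers via splitting nilHecke idempotents in the Karoubi envelope, the defining relations of $\UA$ checked as explicit direct-sum decompositions of 1-morphisms, surjectivity by generation, and injectivity via the flag-variety 2-representations $\Gamma_N$ together with the fact that the representations $V_N$ jointly separate points of $\U$ --- and your closing paragraph correctly identifies where the substantive work lies (the nilHecke matrix-algebra structure, the $\cal{E}^{(a)}\cal{F}^{(b)}\onen$ decomposition with its grading shifts, and the Krull--Schmidt/canonical-basis refinement).
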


% ==============================================================================
%
\section{Iterated flag varieties}
%
% ==============================================================================

For details on the equivariant cohomology of Flag varieties we recommend the
lecture notes of Fulton~\cite{Fulton2}.

Let $X$ be a space equipped with an action on the left by a Lie group $G$. The
equivariant cohomology of $X$ with respect to $G$ is the ordinary cohomology of
$EG \times^G X$:
\begin{eqnarray}
  H^*(EG \times^G X).
\end{eqnarray}
The equivariant cohomology of a point is then
\begin{eqnarray}
  H_G^*(pt) = H^*(BG,\Q).
\end{eqnarray}
The map $X \to pt$ induces on cohomology a map $H_G^*(pt) \to H^*_G(X)$ giving
the equivariant cohomology of $X$ the structure of an algebra over $H_G^*(pt)$.

Here we are interested in $G=GL(N)$ equivariant cohomology of various partial
flag varieties.  For $GL(N)$ the equivariant cohomology ring of a point is
generated by Chern classes $x_i$, $y_i$ of degree $2i$, modulo an ideal
$I_{N,\infty}$,
\begin{eqnarray}
  H_{GL(N)}^*(pt) = H^*\big(Gr(N, \infty)\big) = \Q[x_1,x_2,\ldots,x_N,y_1,y_2,
  \ldots y_j, \ldots]/I_{N,\infty} ,
\end{eqnarray}
where $I_{N,\infty}$ is the ideal generated by the homogeneous terms in
\[
 \left(
 1 + x_1 t + x_2 t^2 + \cdots x_N t^N
 \right)
 \left(
 1 + y_1 t + y_2 t^2 + \cdots y_j t^j + \cdots
 \right)
 =1.
\]
Thus, $H_{GL(N)}^*(pt)$ is isomorphic to the polynomial ring
\begin{eqnarray} \label{eq_EQpoint}
   H_{GL(N)}^*(pt) \cong \Q[x_1, x_2, \ldots, x_{N-1},x_N]
\end{eqnarray}
with $x_i$ in degree $2i$.

Choose a positive integer $N$ and let $n=2k-N$. Fix a complex vector space $W$ of
dimension $N$.  For $0 \leq k \leq N$, let $G_k$ denote the variety of complex
k-planes in $W$. In this notation we suppress the explicit dependence on $N$. If
we wish to make this dependence explicit we use the notation $Gr(k,N)$. The
equivariant cohomology ring of $G_k$ has a natural structure of a $\Z$-graded
algebra,
\[
 H^*_{GL(N)}(G_k,\Q) = \oplus_{i}H^i_{GL(N)}(G_k,\Q) ~.
\]
For simplicity we sometimes write $\HG_k:=H^*_{GL(N)}(G_k,\Q)$.

$GL(N)$ acts transitively on $G_k$, so the equivariant cohomology of $G_k$ is
\begin{eqnarray}
  H^*_{GL(N)}(Gr(k,N)) = H^*_{{\rm Stab}(pt)}\big(pt \in Gr(k,N)\big),
\end{eqnarray}
where the stabilizer of a point $\left(\C^k \subset \C^N\right)$ in $G_k$ is the
group of invertible block $\big(k \times (N-k)\big)$ upper--triangular matrices.
This group is contractible onto its subgroup $GL(k) \times GL(N-k)$.  Hence,
\begin{eqnarray}
 H_k^G & \cong & H^*_{GL(k) \times GL(N-k)}(pt) \cong  H^*_{GL(k)}(pt)
 \otimes H^*_{GL(N-k)}(pt) \nn \\
 & \cong & \Q[x_{1},x_{2}, \cdots, x_{k}] \otimes \Q[y_{1},y_{1}, \cdots, y_{N-k}]
  \nn \\
 & \cong & \Q[x_{1,n},x_{2,n}, \cdots, x_{k,n}; y_{1,n},y_{1,n}, \cdots, y_{N-k,n}]
\end{eqnarray}
where we have introduced a parameter $n=2k-N$ as an extra subscript in the last
equation, with $\deg x_{j,n}=2j$ and $\deg y_{j,n}=2j$.  The convenience of the
additional subscript is that the equivariant cohomology ring of the variety
$G_{k+1}$ of complex $(k+1)$-planes in $W$ is given by
\begin{equation}
\HG_{k+1} = \Q[x_{1,n+2},x_{2,n+2}, \cdots, x_{k+1,n+2}; y_{1,n+2},y_{1,n+2},
\cdots, y_{N-k-1,n+2}],
\end{equation}
since $2(k+1)-N=n+2$.  Thus, these two rings are easily distinguished.

For $0 \leq k < m \leq N$ let $G_{k,m}$ be the variety of partial flags
\[
 \{ (L_k,L_m)| 0 \subset L_k \subset L_m \subset W, \; \dim_{\C} L_k =k, \;
 \dim_{\C}L_m=m\} ~.
\]
We also denote this same variety by $G_{m,k}$.  Let $\HG_{k,m}$ be the
equivariant cohomology algebra of $G_{k,m}$.  Forgetful maps
\[
 \xymatrix{ G_k & G_{k,m} \ar[l]_-{p_1} \ar[r]^-{p_2} & G_m}
\]
induce maps of equivariant cohomology rings
\[
 \xymatrix{ \HG_k \ar[r]^-{p_1^*} & \HG_{k,m}   & \HG_m \ar[l]_-{p_2^*}}
\]
which make the cohomology ring $\HG_{k,m}$ into a $\HG_k\otimes \HG_m$-module.
Since the algebra $\HG_m$ is commutative, we can turn a left $\HG_m$-module into
a right $\HG_m$-module.  Hence, we can make $\HG_{k,m}$ into a
$(\HG_k,\HG_m)$-bimodule.

Let $k_1,\ldots, k_m$ be a sequence of integers with $0 \leq k_i\leq N$ for all
$i$.  Form the $(\HG_{k_1},\HG_{k_m})$-bimodule
\[
 \HG_{k_1, \ldots, k_m} = \HG_{k_1,k_2} \otimes_{\HG_{k_2}} \HG_{k_2,k_3}
 \otimes_{\HG_{k_3}} \cdots \otimes_{\HG_{k_{m-1}}} \HG_{k_{m-1},k_m}~.
\]
Consider the partial flag variety $G_{k_1,\ldots,k_m}$ which consists of
sequences $(W_1,\ldots,W_m)$ of linear subspaces of $W$ such that the dimension
of $W_i$ is $k_i$ and $W_i \subset W_{i+1}$ if $k_i \leq k_{i+1}$ and $W_i
\supset W_{i+1}$ if $k_{i+1} \geq k_i$. The forgetful maps
\[
 \xymatrix{G_{k_1} & G_{k_1,\ldots,k_m} \ar[l]_{p_1} \ar[r]^{p_2} & G_{k_m}}
\]
induce maps of cohomology rings
\[
 \xymatrix{ \HG_{k_1} \ar[r]^-{p_1^*} & \HG(G_{k_1,\ldots,k_m},\Q)   & \HG_{k_m} \ar[l]_-{p_2^*}}
\]
which make the cohomology ring $\HG(G_{k_1,\ldots,k_m},\Q)$ into a graded
$(\HG_{k_1},\HG_{k_m})$-bimodule.  As one might expect, there is an isomorphism
\begin{equation} \label{eq_kh_iso}
H^*_{GL(N)}(G_{k_1,\ldots,k_m},\Q)\cong \HG_{k_1,\ldots,k_m}
\end{equation}
of graded $(\HG_{k_1},\HG_{k_m})$-bimodules.

% ------------------------------------------------------------------------------
%
\subsection{One step iterated flag varieties} \label{subsex_onestep}
%
% ------------------------------------------------------------------------------

A special role is played in our theory by the one step iterated flag varieties
\[
 G_{k,k+1} = \left\{ (W_k,W_{k+1}) |
 \dim_{\C} W_k = k, \; \dim_{\C} W_{k+1} =(k+1), \; 0
 \subset W_k \subset W_{k+1} \subset W  \right\}.
\]
Again, since $GL(N)$ acts transitively on $G_{k,k+1}$ we have
\begin{eqnarray}
  H_{GL(N)}^*(Gr(k,k+1.N)) = H_{{\rm Stab}(pt)}^*\left( pt \in G_{k,k+1}\right).
\end{eqnarray}
The stabilizer of a partial flag $(\C^k \subset \C^{k+1} \subset \C^N) \in
G_{k,k+1}$ is the group of invertible block $\big(k \times 1 \times (N-k-1)\big)$
upper--triangular matrices. This group is contractible onto its subgroup
$GL(k)\times GL(1) \times GL(N-k-1)$.  Therefore, the equivariant cohomology ring
of $G_{k,k+1}$ is
\begin{eqnarray}
 \HG_{k,k+1} &\cong& H_{GL(k)}(pt)\otimes H_{GL(1)}(pt) \otimes H_{GL(N-k-1)}(pt)
 \nn \\
  &\cong& \Q[w_{1},w_{2}, \ldots w_{k};\xi; z_{1},z_2,\ldots,
 z_{N-k-1}].
\end{eqnarray}
Although for our purposes it is useful to identify the generators $w_j$ and $z_j$
with their preimages under the inclusions:
\[
 \xymatrix{ \HG_k \ar[r]^-{p_1^*} & \HG_{k,k+1}   & \HG_{k+1} \ar[l]_-{p_2^*}} .
\]
These inclusions making $\HG_{k,k+1}$ an $(\HG_k,\HG_{k+1})$-bimodule are
explicitly given as follows:
\begin{eqnarray*}
 \HG_{k} & \xymatrix@1{\ar@{^{(}->}[r] & } & \HG_{k,k+1} \\
 x_{j,n} & \mapsto & w_j \qquad {\rm for}\; 1\leq
 j\leq k\\
 y_{1,n} & \mapsto & \xi+z_1 \\
 y_{\ell,n} & \mapsto & \xi \cdot z_{\ell-1}+z_{\ell} \qquad {\rm for}\; 1<
 \ell<N-k \\
 y_{N-k,n} & \mapsto & \xi\cdot z_{N-k-1}
\end{eqnarray*}
and
\begin{eqnarray*}
 \HG_{k+1} & \xymatrix@1{\ar@{^{(}->}[r] & }& \HG_{k,k+1}\\
 x_{1,n+2} & \mapsto & \xi+w_1 \\
 x_{j,n+2} & \mapsto & \xi \cdot w_{j-1}+x_j \qquad {\rm for}\; 1<
 j< k+1 \\
 x_{k+1,n+2} & \mapsto &  \xi \cdot w_{k} \\
 y_{\ell,n+2} & \mapsto & z_{\ell} \qquad \text{for $1 \leq \ell \leq N-k-1$}. \\
\end{eqnarray*}
Using these inclusions we identify certain generators of $\HG_k$ and $\HG_{k+1}$
with their images in the bimodule $\HG_{k,k+1}$, so that
\begin{eqnarray}
 \HG_{k,k+1} &\cong& \Q[x_{1,n},x_{2,n}, \ldots x_{k,n};\xi;y_{1,n+2},y_{2,n+2},\ldots,
 y_{N-k-1,n+2}].
\end{eqnarray}
We will refer to this set of generators as the canonical generators of the ring
$\HG_{k,k+1}$ in what follows.

The generators of $\HG_k$ and $\HG_{k+1}$ that are not mapped to canonical
generators can be expressed in terms of canonical generators as follows:
\begin{eqnarray}
x_{\alpha,n+2}  = x_{\alpha,n}+ x_{\alpha-1,n} \cdot \xi ~, \qquad \quad
y_{\alpha,n} & =& y_{\alpha,n+2}+ y_{\alpha-1,n+2} \cdot \xi ~.
\label{eq_canonical_gen}
\end{eqnarray}
Similarly, the canonical generators can be expressed using non-canonical
generators:
\begin{eqnarray}
x_{\alpha,n} = \sum_{\ell=0}^{\alpha}(-1)^{\ell} x_{\alpha-\ell,n+2} \cdot
\xi^{\ell}~, \qquad \quad y_{\alpha,n+2} =  \sum_{j=0}^{\alpha}(-1)^{j}
y_{\alpha-j,n} \cdot \xi^{j}~. \label{eq_noncanonical_gen}
\end{eqnarray}
In what follows we will often write equations as above, where we define
$x_{0,n}=1$, $y_{0,n}=1$ and
\begin{eqnarray}
 x_{j,n} = 0 &\qquad &\text{for $j$ not in the range $0\leq j \leq k$}, \\
 y_{\ell,n} = 0 &\qquad &\text{for $\ell$ not in the range $0\leq \ell \leq N-k$}.
\end{eqnarray}

% ------------------------------------------------------------------------------
%
\subsubsection{Inclusions of infinite Grassmannians}
%
% ------------------------------------------------------------------------------
We introduce special elements of $\HG_k$ obtained from the inclusions
$H_{GL(k)}(pt)=H^*(Gr(k,\infty)) \to \HG_{k}$ and
$H_{GL(N-k)}(pt)=H^*(Gr(N-k,\infty)) \to \HG_{k}$.

\begin{defn}
For $\alpha, \beta \in \Z$, let $X_{\alpha,n}=Y_{\beta,n}=0$ for $\alpha,\beta<0$
and inductively define the elements $X_{\alpha,n}$, $Y_{\beta,n} \in \HG_{k}$ by
setting $X_{0,n}=Y_{0,n}=1$, and
\begin{eqnarray}
 X_{\alpha,n} := - \sum_{j=1}^{\alpha} y_{j,n} X_{\alpha-j,n}, \qquad \quad
 Y_{\beta,n}  := - \sum_{j=1}^{\beta} x_{j,n} Y_{\beta-j,n}.
\end{eqnarray}
\end{defn}

For example,
\begin{eqnarray}
 Y_{0,n} = 1 , \quad
  Y_{1,n} = -x_{1,n} , \quad
  Y_{2,n} = x_{1,n}^2-x_{2,n} , \quad
  Y_{3,n} = -x_{3,2}+2x_{1,n}x_{2,n}-x_{1,n}^3.
\end{eqnarray}
For a given choice of $N$, with $n=2k-N$, we have by convention that $x_{j,n}=0$
for $j>k$. However, each element $Y_{\alpha,n}$, for $\alpha \in Z_+$, contains
some power of $x_{1,n}$ which for $N>0$ is never zero since there are no
relations on the generators of $\HG_{k}$.

The collection of elements $X_{j,n}$ and $Y_{j,n}$ also satisfy the equations
\begin{eqnarray}
   \left(
 1+x_{1,n} t + x_{2,n} t^2 +\cdots + x_{k,n} t^k
 \right)\left(1+Y_{1,n} t + Y_{2,n} t^2  + \cdots + Y_{j,n} t^j + \cdots \right)
  &=& 1 ,\nn\\
 \left(1+X_{1,n} t + X_{2,n} t^2  + \cdots + X_{j,n} t^j + \cdots \right)
 \left(
 1+y_{1,n} t + y_{2,n} t^2 +\cdots + y_{N-k,n} t^{N-k}
 \right) &=& 1 .\nn \\
\end{eqnarray}
This fact follows immediately from the definition since the homogeneous terms in
the above equations are given by:
\begin{eqnarray}
  \sum_{j=0}^{\beta} x_{j,n} Y_{\beta-j,n} = \delta_{\beta,0} ~,  \qquad \quad
  \sum_{j=0}^{\alpha} y_{j,n} X_{\alpha-j,n} = \delta_{\alpha,0}\label{eq_Y}\label{eq_X}
  .
\end{eqnarray}
The homogeneous terms of these expressions give the defining relations for the
cohomology rings $H^*(k,\infty)$ and $H^*(N-k,\infty)$, respectively. The ring $
H^*(k,\infty)$ is given by
\begin{eqnarray}
 H^*(k,\infty) &=& \Q[x_1,x_2,\cdots x_k; y_1, y_2, \cdots, y_{j}, \cdots]/I_{k,\infty}
\end{eqnarray}
where $I_{k,\infty}$ is the ideal generated by the homogeneous elements in
\begin{eqnarray}
\left(1+x_{1} t + x_{2} t^2  + \cdots + x_{k} t^k \right)
 \left(
 1+y_{1} t + y_{2} t^2 +\cdots + y_{j} t^{j} + \cdots
 \right) &=& 1.
\end{eqnarray}
Hence, we have an inclusion of $H^*(Gr(k,\infty))$ into the equivariant
cohomology ring $\HG_{k}$ given by
\begin{eqnarray}
H^*(Gr(k,\infty)) & \longrightarrow & \HG_{k} \\
  x_j & \mapsto & x_{j,n} \\
  y_{\ell} &\mapsto & Y_{\ell,n}.
\end{eqnarray}

Similarly, the cohomology ring $H^*(Gr(N-k,\infty))$, also generated by Chern
classes, is
\begin{eqnarray}
 H^*(k,\infty) &=& \Q[x_1,x_2,\cdots x_j, \cdots ; y_1, y_2, \cdots, y_{N-k}]/I_{N-k,\infty}
\end{eqnarray}
where $I_{N-k, \infty}$ is the ideal generated by the homogeneous terms in
\begin{eqnarray}
\left(1+x_{1} t + x_{2} t^2  + \cdots + x_{j} t^j + \cdots \right)
 \left(
 1+y_{1} t + y_{2} t^2 +\cdots + y_{N-k} t^{N-k}
 \right) &=& 1.
\end{eqnarray}
Thus, we have an inclusion
\begin{eqnarray}
 H^*(Gr(N-k,\infty)) & \longrightarrow & \HG_{k} \\
  x_j &\mapsto & X_{j,n} \\
  y_{\ell} &\mapsto& y_{j,n}.
\end{eqnarray}

Just as we have identified the canonical generators of $\HG_{k}$ and $\HG_{k+1}$
with generators or sums of generators in $\HG_{k,k+1}$, we can do the same with
the elements $X_{j,n}$ and $Y_{k,n}$.  Here we collect some identities involving
these elements in $\HG_{k,k+1}$.  The reader is encouraged to convert these
identities into a graphical form like the one used for the ordinary cohomology
rings in \cite{Lau3}.

\begin{prop} \label{prop_relation}For all $\alpha \in \Z_+$, the equations
\begin{eqnarray}
  X_{\alpha,n} &=& \sum_{\ell=0}^{\alpha} (-1)^{\ell} X_{\alpha-\ell,n+2} \cdot
  \xi^{\ell} \label{eq_Xslide}\\
  Y_{\alpha,n+2} &=& \sum_{\ell=0}^{\alpha} (-1)^{\ell} Y_{\alpha-\ell,n} \cdot
  \xi^{\ell} \label{eq_Yslide}\\
  \xi^{\alpha} &=& (-1)^{\alpha} \sum_{j=0}^{\alpha} x_{\alpha-j,n}Y_{j,n+2} \label{eq_dY} \\
  \xi^{\alpha} &=& (-1)^{\alpha} \sum_{j=0}^{\alpha} X_{\alpha-j,n}y_{j,n+2}
  \label{eq_dX}
\end{eqnarray}
hold in $\HG_{k,k+1}$.
\end{prop}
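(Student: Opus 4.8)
The plan is to repackage all four families of identities as coefficient-by-coefficient expansions of single equations between formal power series in an auxiliary variable $t$ with coefficients in $\HG_{k,k+1}$. For a sequence $(a_{j,n})_{j\ge 0}$ with $a_{0,n}=1$, set $a_n(t):=\sum_{j\ge 0}a_{j,n}t^j$; thus $x_n(t)$ and $y_n(t)$ are polynomials (of degrees $k$ and $N-k$ in $\HG_{k,k+1}$, by the vanishing conventions), while $X_n(t)$ and $Y_n(t)$ are genuine power series. First I would record two translations of the data already in hand: (1) the defining relations \eqref{eq_X} say exactly that $x_n(t)Y_n(t)=1$ and $y_n(t)X_n(t)=1$ in $\HG_{k,k+1}[[t]]$, so that $Y_n(t)=x_n(t)^{-1}$ and $X_n(t)=y_n(t)^{-1}$ (the inverses exist since the constant terms are $1$); (2) the relations \eqref{eq_canonical_gen} are, after multiplying out and comparing coefficients of $t^\alpha$, equivalent to
\[
 x_{n+2}(t)=(1+\xi t)\,x_n(t),\qquad y_n(t)=(1+\xi t)\,y_{n+2}(t).
\]

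Given this dictionary, each identity drops out by a one-line manipulation, using only $(1+\xi t)^{-1}=\sum_{\ell\ge 0}(-1)^{\ell}\xi^{\ell}t^{\ell}$. From $X_n(t)=y_n(t)^{-1}=(1+\xi t)^{-1}y_{n+2}(t)^{-1}=(1+\xi t)^{-1}X_{n+2}(t)$ one reads off \eqref{eq_Xslide} on comparing coefficients of $t^\alpha$; dually $Y_{n+2}(t)=x_{n+2}(t)^{-1}=(1+\xi t)^{-1}x_n(t)^{-1}=(1+\xi t)^{-1}Y_n(t)$ gives \eqref{eq_Yslide}. For the remaining two I would compute
\[
 x_n(t)\,Y_{n+2}(t)=x_n(t)\bigl[(1+\xi t)x_n(t)\bigr]^{-1}=(1+\xi t)^{-1},\qquad
 X_n(t)\,y_{n+2}(t)=\bigl[(1+\xi t)y_{n+2}(t)\bigr]^{-1}y_{n+2}(t)=(1+\xi t)^{-1},
\]
and then extract the coefficient of $t^\alpha$ from each side and multiply by $(-1)^\alpha$ to obtain \eqref{eq_dY} and \eqref{eq_dX}.

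I do not anticipate a genuine obstacle here; the proof is bookkeeping. The two points that merit a word of care are that every manipulation lives in the formal power series ring $\HG_{k,k+1}[[t]]$ (where invertibility of $x_n(t)$ and $y_n(t)$ is guaranteed by their constant term being $1$), and that the coefficient extraction is consistent with the standing conventions $x_{j,n}=0$ for $j\notin\{0,\dots,k\}$ and $y_{j,n}=0$ for $j\notin\{0,\dots,N-k\}$, so that the finite sums displayed in the proposition are indeed the full coefficients of $t^\alpha$. If one preferred to avoid power series, each identity can instead be proved by induction on $\alpha$ directly from the recursions defining $X_{\alpha,n},Y_{\alpha,n}$ and from \eqref{eq_canonical_gen}, but the generating-function route is shorter and exhibits the symmetry among the four statements.
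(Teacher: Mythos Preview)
Your proof is correct and rests on exactly the ingredients the paper cites---the relations \eqref{eq_canonical_gen}/\eqref{eq_noncanonical_gen} and the defining recursions for $X_{\alpha,n}$, $Y_{\alpha,n}$---so it is essentially the same approach. The paper's own proof is a single sentence pointing to those identities; your generating-function packaging (rewriting \eqref{eq_canonical_gen} as $x_{n+2}(t)=(1+\xi t)x_n(t)$, $y_n(t)=(1+\xi t)y_{n+2}(t)$ and \eqref{eq_X} as $Y_n(t)=x_n(t)^{-1}$, $X_n(t)=y_n(t)^{-1}$) is just a clean way to carry out what the paper leaves to the reader, and has the virtue of making the symmetry among the four identities transparent.
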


\begin{proof}
These relations follow from \eqref{eq_canonical_gen} and
\eqref{eq_noncanonical_gen} together with the definitions of $X_{\alpha,n}$ and
$Y_{\alpha,n}$.
\end{proof}

% ------------------------------------------------------------------------------
%
\subsection{Iterated flag varieties}
%
% ------------------------------------------------------------------------------

The $a$-step iterated flag variety $G_{k,k+1,\ldots,k+a}$ consists of
$a+1$-tuples
\[
  \left\{ (W_k,\ldots, W_{k+a}) |
 \dim_{\C} W_{k+j} = (k+j), \; 0 \subset W_k \subset W_{k+1} \subset \cdots
 \subset W_{k+a} \subset W\right\},
\]
where $0 \leq k \leq k+a \leq N$.  Using that $GL(N)$ acts transitively on this
variety, the cohomology ring of this variety is given by
\begin{equation}
\HG_{k,k+1,\ldots,k+a} \cong \Q[x_{1,n},\ldots, x_{k,n};\xi_{1},\xi_{2}
\ldots,\xi_{a}; y_{1,n+2a}, \ldots , y_{N-k-a,n+2a}] .
\end{equation}
However, this ring can also be equivalently described by
\begin{equation}
  \HG_{k,k+1,\ldots,k+a} \cong \HG_{k,k+1} \otimes_{\HG_{k+1}} H_{k+1,k+2}
  \otimes_{\HG_{k+2}} \cdots \otimes_{\HG_{k+a-1}} \HG_{k+a-1,k+a}.
\end{equation}

For example, the cohomology ring $\HG_{k,k+1,k+2} \cong \HG_{k,k+1}
\otimes_{\HG_{k+1}} H_{k+1,k+2}$.  The ring $\HG_{k,k+1,k+2}$ has generators
\begin{equation}
\HG_{k,k+1,k+2} \cong \Q[x_{1,n},\ldots, x_{k,n};\xi_{1},\xi_{2}; y_{1,n+4},
\ldots , y_{N-k-2,n+4}],
\end{equation}
but using the isomorphism $\HG_{k,k+1,k+2} \cong \HG_{k,k+1} \otimes_{\HG_{k+1}}
H_{k+1,k+2}$ we will write
\begin{eqnarray}
\HG_{k,k+1,k+2} &\longrightarrow & \HG_{k,k+1} \otimes_{\HG_{k+1}} H_{k+1,k+2} \\
 x_{j,n} & \mapsto& x_{j,n} \otimes 1 \nn \\
 y_{\ell,n+4} &\mapsto& 1 \otimes
y_{\ell,n+4} \nn \\
 \xi_1 &\mapsto & \xi \otimes 1 \nn \\
 \xi_2 & \mapsto& 1 \otimes \xi \nn
\end{eqnarray}
Notice that in the tensor product anything with the second label equal to $n+2$
can be moved from one tensor factor to the other, so that $x_{j,n}x_{s,n+2}
\otimes y_{\ell,n+4}=x_{j,n} \otimes x_{s,n+2}y_{\ell,n+4}$ and $x_{j,n}y_{s,n+2}
\otimes y_{\ell,n+4}=x_{j,n} \otimes y_{s,n+2}y_{\ell,n+4}$.

This convention extends to other iterated flag varieties as well.  The generators
of the equivariant cohomology ring $\HG_{k,k+1,k} \cong \HG_{k,k+1}
\otimes_{\HG_{k+1}} \HG_{k+1,k}$ of the variety $G_{k,k+1,k}$ can be written as
$x_{j,n}\otimes 1$, $1 \otimes x_{j,n}$, $\xi \otimes 1$, $1 \otimes \xi$, and
$y_{s,n+2} \otimes 1 = 1 \otimes y_{s,n+2}$.  However, the tensor product over
the action of $H_{k+1}$ induces some relations on these generators.  From
\eqref{eq_canonical_gen} we have that
\begin{eqnarray}
  x_{\alpha,n} \otimes 1 + x_{\alpha-1,n}\cdot \xi \otimes 1 \;= x_{\alpha,n+2}\otimes 1
  =\; 1 \otimes x_{\alpha,n+2} = 1 \otimes x_{\alpha,n} + 1\otimes x_{\alpha-1,n}\cdot
  \xi.
\end{eqnarray}
Relations among other elements of $\HG_{k,k+1} \otimes_{\HG_{k+1}} \HG_{k+1,k}$
can also be derived using \eqref{eq_noncanonical_gen}.

\begin{prop}\label{prop_two_def}
 In the ring $\HG_{k,k+1} \otimes_{\HG_{k+1}} \HG_{k+1,k}$ we have
\begin{eqnarray}
  \sum_{j=0}^{\alpha}(-1)^{j} x_{j,n} \otimes \xi^{\alpha-j} &=&
  \sum_{\ell=0}^{\alpha}(-1)^{\ell} \xi^{\alpha-\ell} \otimes x_{\ell,n}
\end{eqnarray}
for all $\alpha \in \Z_+$.  Similarly, in $\HG_{k,k-1} \otimes_{\HG_{k-1}}
\HG_{k-1,k}$ we have
\begin{eqnarray}
  \sum_{j=0}^{\alpha}(-1)^{j} y_{j,n} \otimes \xi^{\alpha-j} &=&
  \sum_{\ell=0}^{\alpha}(-1)^{\ell} \xi^{\alpha-\ell} \otimes y_{\ell,n}
\end{eqnarray}
for all $\alpha \in \Z^+$.
\end{prop}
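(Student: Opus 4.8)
The plan is to prove the first identity directly by unwinding the definitions of $X_{\alpha,n}$, $Y_{\alpha,n}$ and the tensor-product relations, and then to obtain the second identity by an ``upside-down'' symmetry argument that swaps the roles of the $x$'s and $y$'s (equivalently, replaces $k$ by $N-k$). Both sides of the claimed equation live in $\HG_{k,k+1}\otimes_{\HG_{k+1}}\HG_{k+1,k}$, and the key structural fact we have available is the relation in $\HG_{k,k+1}$ expressing non-canonical generators in terms of canonical ones, namely \eqref{eq_canonical_gen}, together with its inverse \eqref{eq_noncanonical_gen}. The essential subtlety is that in the first tensor factor $\HG_{k,k+1}$ the symbol $\xi$ plays a canonical role, while $x_{\ell,n}$ is canonical there but $x_{\ell,n+2}$ is \emph{not}; in the second factor $\HG_{k+1,k}$ the situation for $\xi$ is reversed. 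Anything carrying the middle label $n+2$ can be slid across the tensor sign, and that is the mechanism that forces the identity.

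First I would observe that, by \eqref{eq_noncanonical_gen} applied inside the first tensor factor, the element $\sum_{j=0}^{\alpha}(-1)^j x_{j,n}\,\xi^{\alpha-j}$ of $\HG_{k,k+1}$ can be rewritten. Concretely, I would compute the generating function: set $x(t)=\sum_j x_{j,n} t^j$ and $x'(t)=\sum_j x_{j,n+2}t^j$, so that \eqref{eq_canonical_gen} says $x'(t)=(1+\xi t)\,x(t)$ in $\HG_{k,k+1}$. Then $\sum_{\alpha}\Big(\sum_{j=0}^{\alpha}(-1)^j x_{j,n}\xi^{\alpha-j}\Big)t^\alpha = x(-?)\cdots$ — more precisely the left-hand side packaged over all $\alpha$ equals $\dfrac{x(t)}{1+\xi t}$ after the substitution tracking the alternating sign, which by the relation is exactly $\dfrac{x'(t)}{(1+\xi t)^2}$; I would instead work with the cleaner packaging $\sum_\alpha\big(\sum_{j}(-1)^j x_{j,n}\xi^{\alpha-j}\big)t^\alpha = \dfrac{x(-t)}{1-\xi t}$ evaluated appropriately, but the point is only that this is a rational expression in $t$, $\xi$, and the $x_{\cdot,n}$. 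The genuinely useful move is: since $x_{\ell,n+2}\otimes 1 = 1\otimes x_{\ell,n+2}$ in the tensor product (this element carries the middle label $n+2$), and since both $\sum_j(-1)^j x_{j,n}\xi^{\alpha-j}$ on the left and $\sum_\ell(-1)^\ell\xi^{\alpha-\ell}x_{\ell,n}$ on the right are, by \eqref{eq_noncanonical_gen}, \emph{equal as operators on the $x_{\cdot,n+2}$} — they are two names for the ``coefficient extraction'' expressing $\xi^\alpha$ relative to the $n+2$ basis on each side — the two sides of the proposition become literally the same element of the tensor product.

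So the clean argument I would write is the following chain of rewrites, done at the level of generating functions to avoid index bookkeeping. In $\HG_{k,k+1}\otimes_{\HG_{k+1}}\HG_{k+1,k}$ introduce $u(t)=\sum_\alpha\big(\sum_{j=0}^\alpha(-1)^j x_{j,n}\xi^{\alpha-j}\big)t^\alpha$ computed in the first factor, and $v(t)=\sum_\alpha\big(\sum_{\ell=0}^\alpha(-1)^\ell\xi^{\alpha-\ell}x_{\ell,n}\big)t^\alpha$ computed in the second. Using $x'(t)=(1+\xi t)x(t)$ in the first factor gives $u(t)=x(-t)/(1-\xi t)$; using the analogous relation $1\otimes x'(t)=1\otimes(1+\xi t)x(t)$ in the second factor gives $v(t)=x(-t)/(1-\xi t)$ computed there; but $x(-t)$ packages the $x_{j,n+2}$ via \eqref{eq_noncanonical_gen} — i.e.\ $x(-t)=x'(-t)\cdot$(inverse of $(1-\xi t)$ factors appropriately) — and every coefficient of $x'(t)$ slides across the tensor product. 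Hence $u(t)\otimes 1 = 1\otimes v(t)$ coefficient by coefficient, which is exactly \eqref{eq_dX}'s companion statement in the proposition. The main obstacle is getting the signs and the direction of the substitution right so that the generating-function identity closes; once the correct formal identity $x'(t)=(1+\xi t)x(t)$ and its companion for the $y$'s are in hand, the proof is a one-line consequence of ``middle-label elements slide across $\otimes_{\HG_{k+1}}$''. For the second assertion of the proposition one repeats the identical computation with $x$ replaced by $y$, using the $y$-half of \eqref{eq_canonical_gen} and working over $\HG_{k-1}$ rather than $\HG_{k+1}$; no new ideas are needed.
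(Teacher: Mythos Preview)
Your core idea is right and is essentially the same mechanism the paper uses: elements carrying the middle label $n+2$ slide across $\otimes_{\HG_{k+1}}$, and the identity follows by rewriting each side in terms of such elements. The paper does this in coordinates by applying \eqref{eq_dY} in the right factor to expand $1\otimes\xi^{\alpha-j}$, sliding the resulting $Y_{\cdot,n+2}$ terms to the left, reindexing, and collapsing again with \eqref{eq_dY}. Your generating-function packaging is a legitimate and arguably cleaner alternative.

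However, your write-up blurs the one point that actually carries the content: the two $\xi$'s are \emph{different}. In the statement, the left-hand side has $x_{j,n}$ in the first tensor factor and $\xi^{\alpha-j}$ in the second, while the right-hand side has $\xi^{\alpha-\ell}$ in the first and $x_{\ell,n}$ in the second. So your $u(t)$ is not ``computed in the first factor''; it lives in the tensor product and mixes $\xi_1:=\xi\otimes 1$ with $\xi_2:=1\otimes\xi$. Once you track this, the argument closes in one line: with $x_L(t)=\sum_j (x_{j,n}\otimes 1)t^j$, $x_R(t)=\sum_j(1\otimes x_{j,n})t^j$, and $x'(t)=\sum_j x_{j,n+2}t^j$ (which is the same on both sides), the relation \eqref{eq_canonical_gen} gives $x'(-t)=(1-\xi_1 t)x_L(-t)=(1-\xi_2 t)x_R(-t)$, hence
\[
\frac{x_L(-t)}{1-\xi_2 t}\;=\;\frac{x'(-t)}{(1-\xi_1 t)(1-\xi_2 t)}\;=\;\frac{x_R(-t)}{1-\xi_1 t},
\]
and extracting the coefficient of $t^{\alpha}$ gives the proposition. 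Your sentences about ``equal as operators on the $x_{\cdot,n+2}$'' and ``coefficient extraction relative to the $n+2$ basis'' should be replaced by this explicit identity; as written they do not parse. The $y$-version is indeed symmetric.
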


\begin{proof} The first identity is proven as follows:
\begin{eqnarray}
  \sum_{j=0}^{\alpha}(-1)^{j} x_{j,n} \otimes \xi^{\alpha-j}
  &\refequal{\eqref{eq_dY}}&
  \sum_{j=0}^{\alpha}\sum_{\ell=0}^{\alpha-j}
  (-1)^{j} x_{j,n} \otimes (-1)^{\alpha-j}
   Y_{\alpha-j-\ell,n+2}x_{\ell,n} \\
   &=& \sum_{j=0}^{\alpha}\sum_{\ell=0}^{\alpha-j}
  (-1)^{\alpha} x_{j,n}Y_{\alpha-j-\ell,n+2} \otimes
   x_{\ell,n} \\
   &=&
   \sum_{\ell=0}^{\alpha}
  (-1)^{\alpha} \sum_{j=0}^{\alpha-\ell}x_{j,n}Y_{(\alpha-\ell)-j,n+2} \otimes
   x_{\ell,n}.
\end{eqnarray}
Using \eqref{eq_dY} once more completes the proof.  The second identity is proven
similarly.
\end{proof}

\begin{prop}[Dot slide] \label{prop_dot_slide}The equations
 \begin{eqnarray}
   \sum_{\ell=0}^{k}(-1)^{\ell} \xi^{k-\ell+1} \otimes x_{\ell,n} &=&
   \sum_{\ell=0}^{k}(-1)^{\ell} \xi^{k-\ell} \otimes x_{\ell,n} \cdot \xi\\
   \sum_{j=0}^{N-k}(-1)^{j} y_{j,n}\cdot \xi \otimes \xi^{k-j}
   &=&
   \sum_{j=0}^{N-k}(-1)^{j} y_{j,n} \otimes \xi^{k-j+1}
 \end{eqnarray}
hold in the rings $\HG_{k,k+1} \otimes_{\HG_{k+1}} \HG_{k+1,k}$ and $\HG_{k,k-1}
\otimes_{\HG_{k-1}} \HG_{k-1,k}$, respectively.
\end{prop}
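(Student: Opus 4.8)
The plan is to deduce both identities formally from Proposition~\ref{prop_two_def}, using only the conventions $x_{j,n}=0$ unless $0\le j\le k$ and $y_{j,n}=0$ unless $0\le j\le N-k$, together with the observation that multiplying a single tensor factor of the iterated ring by a power of $\xi$ is a legitimate ring operation. No further geometric or combinatorial input should be needed.

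For the first equation, working in $\HG_{k,k+1}\otimes_{\HG_{k+1}}\HG_{k+1,k}$, I would rewrite both sides to the common element $\sum_{j=0}^{k}(-1)^{j}x_{j,n}\otimes\xi^{k+1-j}$. The left-hand side is the right-hand member of the first identity of Proposition~\ref{prop_two_def} at $\alpha=k+1$ with its $\ell=k+1$ term deleted (that term vanishes since $x_{k+1,n}=0$); since the $j=k+1$ term on the other side of the same identity vanishes too, the left-hand side equals the displayed element. For the right-hand side, take the identity of Proposition~\ref{prop_two_def} at $\alpha=k$, multiply through by $1\otimes\xi$ (here $\xi$ is the canonical generator of the factor $\HG_{k+1,k}$), and note $k-j+1=k+1-j$ to reach the same element. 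This proves the first equation.

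For the second equation, in $\HG_{k,k-1}\otimes_{\HG_{k-1}}\HG_{k-1,k}$, the mirror argument uses the second identity of Proposition~\ref{prop_two_def}, but now one must carry a power of $\xi$ across the tensor symbol rather than read it off directly. Writing the left-hand side as $(\xi\otimes 1)\cdot\bigl(\sum_{j}(-1)^{j}y_{j,n}\otimes\xi^{k-j}\bigr)$ and applying Proposition~\ref{prop_two_def} at $\alpha=k$ turns it into $\sum_{\ell}(-1)^{\ell}\xi^{k-\ell+1}\otimes y_{\ell,n}$; recognising this as the right-hand member of Proposition~\ref{prop_two_def} at $\alpha=k+1$ with its vanishing top term removed, and using $y_{k+1,n}=0$ on the other side as well, rewrites it as $\sum_{j}(-1)^{j}y_{j,n}\otimes\xi^{k+1-j}$, which is the right-hand side of the equation. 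The step I expect to require the most care is the index-and-factor bookkeeping: keeping straight which of the two copies of $\xi$ (the one in the left tensor factor and the one in the right, as fixed by the identification of canonical generators of the one-step rings made earlier) is being multiplied, and checking that the vanishing conventions genuinely truncate each displayed sum to the range written — an issue that for the second equation also involves the sign of $n=2k-N$.
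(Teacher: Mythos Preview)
Your argument for the first equation is correct and is essentially the paper's proof, just cited more efficiently: the paper expands $\xi^{k-\ell+1}$ in the left factor via \eqref{eq_dY}, moves the $Y_{*,n+2}$ across the tensor, reassembles via \eqref{eq_dY} to reach $\sum_{j}(-1)^{j}x_{j,n}\otimes\xi^{k+1-j}$, and then invokes Proposition~\ref{prop_two_def} to finish. You skip the \eqref{eq_dY} detour by citing Proposition~\ref{prop_two_def} directly at $\alpha=k+1$ (legitimate, since that proposition is stated for all $\alpha\in\Z_+$); the remainder is identical.

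For the second equation, the difficulty you flag with the sign of $n$ is real, but it is a typo in the displayed statement rather than a subtlety of the argument: the exponent $k-j$ should read $N-k-j$ (compare Corollary~\ref{cor_bimodule_maps}, where exactly this element appears as the image of $1$, and note that otherwise the exponent would be negative for $j>k$). With the corrected exponent your mirror proof goes through verbatim with $\alpha=N-k$ and $\alpha=N-k+1$ in place of $\alpha=k$ and $\alpha=k+1$, and the vanishing you need is $y_{N-k+1,n}=0$, which holds unconditionally by the conventions in Section~\ref{subsex_onestep}. As you have written it, the choice $\alpha=k$ and the claim $y_{k+1,n}=0$ only cover the range $n\ge 0$; this is the source of your unease, not any genuine obstruction.
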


This Proposition says that multiplication by $\xi$ on one of the tensor factors
in the above sums is equivalent to multiplication by $\xi$ on the other tensor
factor. We will see in the next section that this proposition is used to show
that a dot can be slid from one side of a cup to the other
(Lemma~\ref{lem_dot_slide}).

\begin{proof}  We have
\begin{eqnarray}
 \sum_{\ell=0}^{k}(-1)^{\ell} \xi^{k-\ell+1} \otimes x_{\ell,n}
  &\refequal{\eqref{eq_dY}}&
 \sum_{\ell=0}^{k}\sum_{j=0}^{k+1 -\ell}(-1)^{k+1} \cdot x_{j,n}Y_{k+1-\ell-j,n+2}
  \otimes x_{\ell,n} \\
  &=&
 \sum_{j=0}^{k+1}(-1)^{k+1}  x_{j,n}
  \otimes \sum_{\ell=0}^{k+1-j}Y_{(k+1-j)-\ell,n+2}x_{\ell,n} \\
  &\refequal{\eqref{eq_dY}}&
  \sum_{j=0}^{k+1}(-1)^{j}  x_{j,n}
  \otimes \xi^{k-\ell+1}
\end{eqnarray}
were in the second equality we have used that $x_{k+1,n}=0$ to increase the
$\ell$-summation index and then switched the summation order.  The first claim
now follows from \eqref{prop_two_def}.  The second claim is proven similarly.
\end{proof}

\begin{cor} \label{cor_bimodule_maps}
The assignments
\[
\begin{array}{ccl}
    \HG_k \ & \longrightarrow &
    \left(\HG_{k,k+1} \otimes_{\HG_{k+1}} \HG_{k+1,k}\right)
    \\
     1 &\mapsto&
     \xsum{j=0}{k}(-1)^{j} x_{j,n} \otimes \xi^{k-j} =
     \xsum{\ell=0}{k}(-1)^{\ell} \xi^{k-\ell} \otimes x_{\ell,n}
 \end{array}
 \]
 and
\[
 \begin{array}{ccl}
    \HG_k \ & \longrightarrow &
    \left(\HG_{k,k-1} \otimes_{\HG_{k-1}} \HG_{k-1,k}\right)
    \\
     1 &\mapsto&
    \xsum{j=0}{N-k}(-1)^{j} y_{j,n} \otimes \xi^{N-k-j} =
     \xsum{\ell=0}{N-k}(-1)^{\ell} \xi^{N-k-\ell} \otimes y_{\ell,n}
 \end{array}
 \]
define morphisms of graded bimodules of degree $2k$ and $2(N-k)$, respectively.
\end{cor}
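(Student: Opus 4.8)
The plan is to handle both assignments by the same mechanism; I describe the first one, writing $M:=\HG_{k,k+1}\otimes_{\HG_{k+1}}\HG_{k+1,k}$ for its target and $\omega\in M$ for the proposed image of $1$. First I would check that $\omega$ is well defined: the two displayed formulas for it agree by Proposition~\ref{prop_two_def} with $\alpha=k$ (and with $\alpha=N-k$ for the second assignment). Each summand $x_{j,n}\otimes\xi^{k-j}$ is homogeneous of degree $2j+2(k-j)=2k$, so $\omega$ lies in degree $2k$; since $\HG_k\to\HG_{k,k+1}$ is degree preserving and $1\in\HG_k$ has degree $0$, the left-$\HG_k$-module map $c\mapsto c\cdot\omega$ raises degree by exactly $2k$ (resp.\ $2(N-k)$). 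Because $\HG_k$ is commutative and free of rank one over itself, $c\mapsto c\cdot\omega$ is the unique left-module extension of $1\mapsto\omega$, and it is a morphism of $(\HG_k,\HG_k)$-bimodules if and only if $c\cdot\omega=\omega\cdot c$ for all $c\in\HG_k$. Writing $\phi_1,\phi_2\maps\HG_k\to M$ for the ring homomorphisms giving the left and right actions (via $\HG_k\to\HG_{k,k+1}$ and $\HG_k\to\HG_{k+1,k}$) and using that $M$ is commutative, the bimodule condition becomes the single statement $\bigl(\phi_1(c)-\phi_2(c)\bigr)\,\omega=0$ for every $c\in\HG_k$.

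The key input is Proposition~\ref{prop_dot_slide}. Rewriting $\omega$ by Proposition~\ref{prop_two_def}, the first identity of Proposition~\ref{prop_dot_slide} says precisely $(\xi\otimes 1)\cdot\omega=\omega\cdot(1\otimes\xi)$ in $M$; since $M$ is commutative this is $(\xi_1-\xi_2)\,\omega=0$, where $\xi_1:=\xi\otimes1$ and $\xi_2:=1\otimes\xi$. It therefore suffices to show $\phi_1(c)-\phi_2(c)\in(\xi_1-\xi_2)M$ for every $c\in\HG_k$. The set of such $c$ is a subring of $\HG_k$ — closed under sums, and under products because $\phi_1(cd)-\phi_2(cd)=\phi_1(c)\bigl(\phi_1(d)-\phi_2(d)\bigr)+\bigl(\phi_1(c)-\phi_2(c)\bigr)\phi_2(d)$ — so it is enough to verify it for the algebra generators $x_{\alpha,n}$ ($1\le\alpha\le k$) and $y_{\alpha,n}$ ($1\le\alpha\le N-k$) of $\HG_k$.

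For each such generator, \eqref{eq_canonical_gen}--\eqref{eq_noncanonical_gen} express its canonical image in $\HG_{k,k+1}$ (equivalently in $\HG_{k+1,k}$) as a polynomial in elements carrying the second label $n+2$ and in $\xi$; for instance $x_{\alpha,n}=\sum_{\ell}(-1)^\ell x_{\alpha-\ell,n+2}\,\xi^\ell$ and $y_{\alpha,n}=y_{\alpha,n+2}+y_{\alpha-1,n+2}\,\xi$. Elements with second label $n+2$ lie in $\HG_{k+1}$ and hence slide across $\otimes_{\HG_{k+1}}$, so $\phi_1(c)-\phi_2(c)$ collapses to a combination of terms $g\cdot(\xi_1^{\ell}-\xi_2^{\ell})$ with $g$ in the (central) image of $\HG_{k+1}$ in $M$; each such term lies in $(\xi_1-\xi_2)M$ because $\xi_1^{\ell}-\xi_2^{\ell}=(\xi_1-\xi_2)\sum_{i+j=\ell-1}\xi_1^i\xi_2^j$.

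Combining these points, $\bigl(\phi_1(c)-\phi_2(c)\bigr)\omega\in M\cdot(\xi_1-\xi_2)\omega=0$ for all $c$, so $c\mapsto c\cdot\omega$ is a graded bimodule morphism of degree $2k$. The second assignment is proved by the identical argument inside $\HG_{k,k-1}\otimes_{\HG_{k-1}}\HG_{k-1,k}$, using the second identity of Proposition~\ref{prop_dot_slide} to get $(\xi_1-\xi_2)\omega=0$ and \eqref{eq_canonical_gen}--\eqref{eq_noncanonical_gen} with the label $n-2$ now playing the role of $n+2$. I expect the only genuine work — hence the main obstacle — to be this last step's bookkeeping: confirming that every generator of $\HG_k$ can indeed be rewritten using only "label $n\pm2$" elements together with powers of $\xi$, and that those generators really do generate all of $\HG_k$ so that the subring reduction applies.
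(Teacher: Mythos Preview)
Your proposal is correct and follows essentially the same route as the paper: both arguments reduce to checking that the left and right $\HG_k$-actions on $\omega$ agree, and both do this by combining Proposition~\ref{prop_dot_slide} (your $(\xi_1-\xi_2)\omega=0$) with the relations \eqref{eq_canonical_gen}--\eqref{eq_noncanonical_gen} to pass generators of $\HG_k$ across the tensor product. The paper's version is a two-sentence sketch (``$x_{j,n}$ can be moved across \dots\ at the cost of introducing powers of $\xi$ \dots\ factors of $\xi$ can be slid across''); your reformulation via divisibility by $\xi_1-\xi_2$ in the commutative ring $M$ is a cleaner and more explicit packaging of exactly the same idea, and your explicit treatment of both the $x_{\alpha,n}$ and $y_{\alpha,n}$ generators fills in a detail the paper leaves implicit.
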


\begin{proof}
We prove the first claim, the second is left to the reader. It suffices to check
that the left action of $\HG_k$ on the image of $1 \in \HG_k$ in
$\left(\HG_{k,k+1} \otimes_{\HG_{k+1}} \HG_{k+1,k}\right)$ is equal to right
action of $\HG_k$.  The Corollary follows since $x_{j,n}$ can be moved across
tensor products by \eqref{eq_canonical_gen} and \eqref{eq_noncanonical_gen} at
the cost of introducing powers of $\xi$ on one of the tensor factors. By
Proposition~\ref{prop_dot_slide}, factors of $\xi$ can be slid across tensor
factors in the above sums.
\end{proof}

% ------------------------------------------------------------------------------
%
\subsection{Defining the 2-category $\GrGs$}
%
% ------------------------------------------------------------------------------

Recall the additive 2-category $\cat{Bim}$ whose objects are graded rings,
morphisms are graded bimodules, and the 2-morphisms are degree-preserving
bimodule maps. Idempotent bimodule homomorphisms split in $\cat{Bim}$. Denote by
$\cat{Bim}^*$ the graded additive 2-category with the same objects and
1-morphisms as $\cat{Bim}$, and whose 2-morphisms consist of all bimodule maps,
each of which is the sum of its homogeneous components.

Fix a positive integer $N$ and let $n=2k-N$ for $0 \leq k \leq N$.

\begin{defn}
The 2-category $\GrG$ is the idempotent completion inside of $\cat{Bim}$ (see
\cite[Section~6.1.5]{Lau3}) of the 2-category consisting of
\begin{itemize}
  \item objects: the graded equivariant cohomology rings $\HG_k$ for each $0 \leq k \leq N$,
  \item morphisms: generated by the graded ($\HG_k$,$\HG_k$)-bimodule $\HG_k$, the
  graded ($\HG_k$,$\HG_{k+1}$)-bimodule $\HG_{k,k+1}$ and the graded ($\HG_{k+1}$,$\HG_{k}$)-bimodule
  $\HG_{k+1,k}$, together with their shifts $\HG_k\{s\}$, $\HG_{k,k+1}\{s\}$, and
  $\HG_{k,k+1}\{s\}$ for $s \in \Z$.  The bimodules $\HG_k=\HG_k\{0\}$ are the
  identity 1-morphisms.  Thus, a generic morphism from $\HG_{k_1}$ to $\HG_{k_m}$ is a
  direct sum of graded $(\HG_{k_1},\HG_{k_m})$-bimodules of the form
\begin{equation} \label{eq_long_bimodule}
   \HG_{k_1,k_2} \otimes_{\HG_{k_2}} \otimes_{\HG_{k_2,k_3}} \otimes_{\HG_{k_3}}
   \cdots \otimes_{\HG_{k_{m-1}}} \HG_{k_{m-1},k_m} \{s\}
\end{equation}
 where $k_{i+1} = k_{i} \pm 1$ for $1<i\leq m$.
  \item 2-morphisms: degree-preserving bimodule maps
\end{itemize}
\end{defn}
There is a 2-subcategory $\GrGs$ of $\cat{Bim}^*$ with the same objects and
morphisms as $\GrG$, and with 2-morphisms
\begin{equation}
  \GrGs(x,y) := \bigoplus_{s \in \Z} \GrG(x\{s\},y).
\end{equation}

Given two $(\HG_{k_1},\HG_{k_m})$-bimodules $M_1$ and $M_2$ of the form
\eqref{eq_long_bimodule} there are canonical inclusions of $\HG_{k_1}$ into the
first factor of the tensor product and canonical inclusion of $\HG_{k_m}$ into
the last factor.  Let $n_1=2k_1-N$ and $n_m=2k_m-N$. Under these inclusions, the
left action of elements $x_{j,n_1}, y_{\ell,n_1} \in \HG_{k_1}$ can be written as
\[
x_{j,n_1} \otimes 1 \otimes \dots \otimes 1, \quad y_{\ell,n_1} \otimes 1 \otimes
\dots \otimes 1
\]
in either bimodule $M_1$ or $M_2$.  Likewise, the right action of $x_{j,n_m},
y_{\ell,n_m} \in \HG_{k_m}$ can be written as
\[
1 \otimes  \dots \otimes  1 \otimes x_{j,n_m}, \quad 1 \otimes  \dots \otimes 1
\otimes y_{\ell,n_m},
\]
in $M_1$ or $M_2$. As in \cite{Lau3}, to define an
$(\HG_{k_1},\HG_{k_m})$-bimodule homomorphism from $M_1$ to $M_2$ we specify the
map on elements of the form $\xi^{\alpha_1} \otimes \xi^{\alpha_2} \otimes \dots
\otimes \xi^{\alpha_m}$ with it understood that preservation of left action of
$H_{k_1}$, and the right action of $H_{k_m}$, require that elements
\[
 x_{j,n_1} \otimes 1 \otimes \dots \otimes 1, \quad y_{\ell,n_1} \otimes 1 \otimes \dots \otimes
 1, \quad
 1 \otimes  \dots \otimes  1 \otimes x_{j,n_m}, \quad 1 \otimes  \dots \otimes 1 \otimes
 y_{\ell,n_m},
\]
of $M_1$ are mapped to the corresponding elements of $M_2$.

% ==============================================================================
%
\section{Equivariant representations of $\UcatD$}
%
% ==============================================================================

% ------------------------------------------------------------------------------
%
\subsection{Defining the 2-functor $\Gamma_{N}^G$} \label{subsec_define_gamma}
%
% ------------------------------------------------------------------------------

On objects the 2-functor $\Gamma_{N}^{G}\maps \Ucatq \to \GrGs$ sends $\bfit{n}$
to the ring $\HG_k$ whenever $n$ and $k$ are compatible:
\begin{eqnarray}
 \Gamma_{N}^{G} \maps \Ucatq & \to & \GrGs \nn \\
  \bfit{n} & \mapsto &
  \left\{\begin{array}{ccl}
    \HG_k & & \text{with $n=2k-N$ and $0\leq k \leq N$} \\
    0  & & \text{otherwise.}
  \end{array} \right.
\end{eqnarray}
Morphisms of $\Ucatq$ get mapped by $\Gamma^{G}_N$ to graded bimodules:
\begin{eqnarray}
 \Gamma_{N}^{G} \maps \Ucatq & \to & \GrGs \nn \\
  \onen\{s\} & \mapsto &
  \left\{\begin{array}{ccl}
    \HG_k\{s\} & & \text{with $n=2k-N$ and $0\leq k \leq N$} \\
    0  & & \text{otherwise.}
  \end{array} \right. \\
  \cal{E}\onen\{s\} & \mapsto &
  \left\{\begin{array}{ccl}
    \HG_{k,k+1}\{s+1-N+k\} & & \text{with $n=2k-N$ and $0\leq k < N$} \\
    0  & & \text{otherwise.}
  \end{array} \right. \\
  \cal{F}\onen\{s\} & \mapsto &
  \left\{\begin{array}{ccl}
    \HG_{k+1,k}\{s+1-k\} & & \text{with $n=2k-N$ and $0\leq k < N$} \\
    0  & & \text{otherwise.}
  \end{array} \right.
\end{eqnarray}

Here $\HG_{k,k+1}\{s+1-k\}$ is the bimodule $\HG_{k,k+1}$ with the grading
shifted by $s+1-k$ so that $$(\HG_{k,k+1}\{s+1-k\})_j =
(\HG_{k,k+1})_{j+s+1-k}.$$ More generally, we have
\begin{eqnarray}
  \cal{E}^{\alpha}\onen\{s\} &\mapsto &
  \HG_{k,k+1,k+2,\cdots,k+(\alpha-1),k+\alpha}\{s+\alpha(-N+k) +\alpha\} \nn\\
   \cal{F}^{\beta}\onen\{s\} &\mapsto &
    \HG_{k,k-1,k-2,\cdots,k-(\beta-1),k-\beta}\{s-\beta k+2-\beta\} \nn
\end{eqnarray}
so that \begin{equation} \label{eq_long_composite}\cal{E}^{\alpha_1}
\cal{F}^{\beta_1}\cal{E}^{\alpha_2} \cdots
 \cal{F}^{\beta_{k-1}}\cal{E}^{\alpha_k}\cal{F}^{\beta_k}\onen\{s\}
 \cong \cal{E}^{\alpha_1}\mathbf{1}_{n-\sum (\beta_j-\alpha_j)} \circ \dots \cal{E}^{\alpha_k}\mathbf{1}_{n-2\beta_k} \circ \cal{F}^{\beta_k}\onen\{s\}
\end{equation}
is mapped to the graded bimodule
\begin{eqnarray}
\HG_{k,k-1,\cdots,k-\beta_k,k-\beta_k+1,\cdots,k-\beta_k+\alpha_k,
k-\beta_k+\alpha_k+1, \cdots, k-\sum_j(\beta_j+\alpha_j)}\nn
\end{eqnarray}
with grading shift $\{s+s'\}$, where $s'$ is the sum of the grading shifts for
each terms of the composition in \eqref{eq_long_composite}.  Formal direct sums
of morphisms of the above form are mapped to direct sums of the corresponding
bimodules.

It follows from \eqref{eq_kh_iso} that $\Gamma_{N}^{G}$ preserves composites up
to isomorphism. Hence, the 2-functor $\Gamma_N^{G}$ is a weak 2-functor or
bifunctor. In what follows we will often simplify our notation and write
$\Gamma^{G}$ instead of $\Gamma_{N}^{G}$. We now proceed to define $\Gamma^{G}$
on 2-morphisms.

% ------------------------------------------------------------------------------
%
\subsubsection{Biadjointness}
%
% ------------------------------------------------------------------------------

\begin{defn} \label{def_biadjoint}
The 2-morphisms generating biadjointness in $\Ucatq$ are mapped by $\Gamma^G$ to
the following bimodule maps.
\begin{eqnarray}
    \Gamma^G\left(\;\xy
    (0,-3)*{\bbpef{}};
    (8,-5)*{ \bfit{n}};
    (-4,2)*{\scs \cal{F}};
    (4,2)*{\scs \cal{E}};
    \endxy \; \right)
    & : &
 \left\{
  \begin{array}{ccl}
    \HG_k \ & \longrightarrow &
    \left(\HG_{k,k+1} \otimes_{\HG_{k+1}} \HG_{k+1,k}\right)\{1-N\}
    \\
     1 &\mapsto&
     \xsum{j=0}{k}(-1)^{j} x_{j,n} \otimes \xi^{k-j} =
     \xsum{\ell=0}{k}(-1)^{\ell}  \xi^{k-\ell} \otimes x_{\ell,n}
 \end{array}
 \right.
     \label{def_eq_FE_G}\\
    \Gamma^G\left(\;\xy
    (0,-3)*{\bbpfe{}};
    (8,-5)*{ \bfit{n}};
    (-4,2)*{\scs \cal{E}};
    (4,2)*{\scs \cal{F}};
    \endxy\;\right)
     & : &
     \left\{
  \begin{array}{ccl}
    \HG_k \ & \longrightarrow &
    \left(\HG_{k,k-1} \otimes_{\HG_{k-1}} \HG_{k-1,k}\right)\{1-N\}
    \\
     1 &\mapsto&
    \xsum{j=0}{N-k}(-1)^{j} y_{j,n} \otimes \xi^{N-k-j} =
     \xsum{\ell=0}{N-k} (-1)^{\ell} \xi^{N-k-\ell} \otimes y_{\ell,n}
 \end{array}
 \right.
     \label{def_eq_EF_G}
\end{eqnarray}
\begin{equation}
   \Gamma^G\left(\;\xy
    (0,0)*{\bbcef{}};
    (8,2)*{ \bfit{n}};
    (-4,-5.5)*{\scs \cal{F}};
    (4,-5.5)*{\scs \cal{E}};
    \endxy\;\right)
 :
 \left\{
  \begin{array}{ccl}
     \left(\HG_{k,k+1} \otimes_{\HG_{k+1}} \HG_{k+1,k}\right)\{1-N\}
     & \rightarrow &
     \HG_k
    \\ \\
    \xi_1^{m_1} \otimes \xi_1^{m_2}
    &\mapsto &
    (-1)^{m_1+m_2+k-N+1}\; X_{m_1+m_2+1+k-N,n}
 \end{array}
 \right.
   \label{def_FE_cap} \end{equation}\begin{equation}
  \Gamma^G\left(\;\xy
    (0,0)*{\bbcfe{}};
    (8,2)*{ \bfit{n}};
    (-4,-5.5)*{\scs \cal{E}};
    (4,-5.5)*{\scs \cal{F}};
    \endxy \right)
 :
 \left\{
  \begin{array}{ccl}
     \left(\HG_{k,k-1} \otimes_{\HG_{k-1}} \HG_{k-1,k}\right)\{1-N\}
     & \rightarrow &
     \HG_k
    \\ \\
    \xi_1^{m_1} \otimes \xi_1^{m_2}
    &\mapsto &
    (-1)^{m_1+m_2+1-k}\;Y_{m_1+m_2+1-k,n }
 \end{array}
 \right.
\label{def_EF_cap}
\end{equation}
\end{defn}
Corollary~\ref{cor_bimodule_maps} shows that the cups above are well-defined
bimodule maps. It is clear that the caps are bimodule maps since the image of
$\xi_1^{m_1} \otimes \xi_2^{m_2}$ only depends on the sum $m_1+m_2$.

These definitions preserve the degree of the 2-morphisms of $\Ucatq$ defined in
Section~\ref{subsec_Ucat}.  In \eqref{def_eq_FE_G} the element $1$ is in degree
zero and is mapped to a sum of elements in degree $2k$ that have been shifted by
$\{1-N\}$ for a total degree $2k+1-N= 1 +n$.  The degree in \eqref{def_eq_EF_G}
is $2(N-k)+(1-N)=1-n$.  Similarly, in \eqref{def_FE_cap} a degree $2(m_1+m_2)$
element shifted by $1-N$ is mapped to a degree $2(m_1+m_2+k-N-1)$ element, so the
bimodule map has total degree $1+n$.  One can easily check that the bimodule map
defined in \eqref{def_EF_cap} is of degree $1-n$.

Notice the similarity between the definitions
\eqref{def_eq_FE_G}--\eqref{def_EF_cap} defining the 2-functor $\Gamma_N^G$ and
the corresponding definitions for the 2-functor $\Gamma_N$ defined in
\cite{Lau3}.  In fact, in $\Grs$ the definitions agree.  That is, in the presence
of the extra relations imposed on the ordinary cohomology rings, the above
assignments for $\Gamma_N^G$ are equal to the assignments made by $\Gamma_N$.

% ------------------------------------------------------------------------------
%
\subsubsection{NilHecke generators}
%
% ------------------------------------------------------------------------------

We show that the nilHecke algebra $\BNC_a$ acts on $\End(\HG_{k,k+1,\ldots,
k+a})$ with $\chi_i$ acting by multiplication by $\xi_i$ and $u_i$ acting by the
divided differences $\partial_i$ operator on the variables $\xi_i$.

\begin{defn}
The 2-morphisms $z_n$ and $\hat{z}_n$ in $\Ucatq$ are mapped by $\Gamma_N^G$ to
the graded bimodule maps:
\begin{eqnarray}
  \Gamma^G\left(\xy
 (0,8);(0,-8); **\dir{-} ?(.75)*\dir{>}+(2.3,0)*{\scriptstyle{}};
 (0,0)*{\txt\large{$\bullet$}};
 (4,-3)*{ \bfit{n}};
 (-6,-3)*{ \bfit{n+2}};
 (-10,0)*{};(10,0)*{};
 \endxy\right)
\quad &\maps&
 \left\{
\begin{array}{ccc}
  \HG_{k,k+1}\{1-N+k\} & \to  & \HG_{k,k+1}\{1-N+k\} \\ \\
  \xi^m & \mapsto &  \quad \xi^{m+1}
\end{array}
\right. \\
  \Gamma^G\left(\xy
 (0,8);(0,-8); **\dir{-} ?(.75)*\dir{<}+(2.3,0)*{\scriptstyle{}};
 (0,0)*{\txt\large{$\bullet$}};
 (6,-3)*{ \bfit{n+2}};
 (-4,-3)*{ \bfit{n}};
 (-10,0)*{};(10,0)*{};
 \endxy\right)
\quad &\maps&
 \left\{
\begin{array}{ccc}
  \HG_{k+1,k}\{1-k\} & \to  & \HG_{k+1,k}\{1-k\} \\ \\
  \xi^m & \mapsto &  \quad \xi^{m+1}
\end{array}
\right.
\end{eqnarray}
Note that these assignment are degree preserving since these bimodule maps are
degree 2.
\end{defn}

The nilCoxeter generator $U_n$ is mapped to the bimodule map which acts as the
divided difference operator in the variables $\xi_j$ for $1 \leq j \leq a$.
\begin{defn} \label{def_nil_generator}
Define bimodule maps by
\begin{eqnarray}
  \Gamma^G\left(   \xy
    (0,0)*{\twoIu};
    (6,0)*{\bfit{n}};
 \endxy\right)
 & : &
 \left\{
 \begin{array}{ccl}
   \HG_{k,k+1,k+2}\{1-N\} & \to & \HG_{k,k+1,k+2}\{1-N\} \\ & &\\
 \xi_1^{m_1} \otimes \xi_2^{m_2}
  & \mapsto &
\xsum{j=0}{m_1-1} \xi_1^{m_1+m_2-1-j} \otimes \xi_2^{j}-
        \xsum{j=0}{m_2-1}
        \xi_1^{m_1+m_2-1-j} \otimes \xi_2^{j}
 \end{array}
    \right.
 \nn \\
  \Gamma^G\left(   \xy
    (0,0)*{\twoId};
    (6,0)*{\bfit{n+4}};
 \endxy\right)
 & : &
 \left\{
 \begin{array}{ccl}
   \HG_{k+2,k+1,k}\{1-N\} & \to & \HG_{k+2,k+1,k}\{1-N\} \\ & & \\
 \xi_1^{m_1} \otimes \xi_2^{m_2}
  & \mapsto &
\xsum{j=0}{m_2-1} \xi_1^{m_1+m_2-1-j} \otimes \xi_2^{j}
           -
        \xsum{j=0}{m_1-1}
        \xi_1^{m_1+m_2-1-j} \otimes \xi_2^{j}
 \end{array}
  \right.
 \nn \\
 & & \label{eq_def_gamma_Un}
\end{eqnarray}
The value of these maps on any other generator is determined from the rules above
together with the requirement that the maps preserve the actions of $\HG_k$ and
$\HG_{k+2}$. The maps $\Gamma^{G}(U_n)$ and $\Gamma^{G}(\hat{U}_n)$, as defined
above, have degree $-2$ so that $\Gamma^{G}$ preserves the degree of $U_n$ and
$\hat{U}_n$.
\end{defn}

\begin{rem} \label{rem_nil_zero}
$\Gamma^{G}(U_n)(\xi_1^{m_1}\otimes \xi_2^{m_2})$ is zero when $m_1=m_2$.  This
is clear from the definition.  When the number of dots the upward oriented lines
is equal the two sums above cancel.
\end{rem}

% ------------------------------------------------------------------------------
%
\subsection{Checking the relations of $\UcatD$}
%
% ------------------------------------------------------------------------------

In this section we show that the relations of Section~\ref{subsec_Ucat} for
$\Ucatq$ are satisfied in $\GrGs$, thus establishing that $\Gamma_N^{G}$ is a
2-functor.  The proof is analogous to the proof given in \cite{Lau3} that
$\Gamma_N \maps \Ucatq \to \Grs$ is a 2-functor .  Here we must be careful that
we never use the additional relations from the ordinary cohomology rings.  From
the definitions in the previous section it is clear that $\Gamma_N^{G}$ preserves
the degree associated to generators. For this reason, we often simplify our
notation in this section by omitting the grading shifts when no confusion is
likely to arise.

\begin{lem}[Biadjointness] \label{lem_biadjoint}
In $\GrGs$ the following identities are satisfied
\[
  \Gamma^{G}\left(\;  \xy   0;/r.18pc/:
    (-8,0)*{}="1";
    (0,0)*{}="2";
    (8,0)*{}="3";
    (-8,-10);"1" **\dir{-};
    "1";"2" **\crv{(-8,8) & (0,8)} ?(0)*\dir{>} ?(1)*\dir{>};
    "2";"3" **\crv{(0,-8) & (8,-8)}?(1)*\dir{>};
    "3"; (8,10) **\dir{-};
    (14,10)*{ \bfit{n}};
    (-6,10)*{ \bfit{n+2}};
    \endxy \;\right)
    \; =
    \;
       \Gamma^{G}\left(\;\xy   0;/r.18pc/:
    (-8,0)*{}="1";
    (0,0)*{}="2";
    (8,0)*{}="3";
    (0,-10);(0,10)**\dir{-} ?(.5)*\dir{>};
    (5,10)*{ \bfit{n}};
    (-8,10)*{ \bfit{n+2}};
    \endxy\;\right)
\qquad
    \Gamma^{G}\left(\;\xy   0;/r.18pc/:
    (-8,0)*{}="1";
    (0,0)*{}="2";
    (8,0)*{}="3";
    (-8,-10);"1" **\dir{-};
    "1";"2" **\crv{(-8,8) & (0,8)} ?(0)*\dir{<} ?(1)*\dir{<};
    "2";"3" **\crv{(0,-8) & (8,-8)}?(1)*\dir{<};
    "3"; (8,10) **\dir{-};
    (12,10)*{ \bfit{n}};
    (-6,10)*{ \bfit{n-2}};
    \endxy\;\right)
    \; =
    \;
       \Gamma^{G}\left(\;\xy   0;/r.18pc/:
    (-8,0)*{}="1";
    (0,0)*{}="2";
    (8,0)*{}="3";
    (0,-10);(0,10)**\dir{-} ?(.5)*\dir{<};
    (6,10)*{ \bfit{n}};
    (-7,10)*{ \bfit{n-2}};
    \endxy\;\right)
\]
\[
  \Gamma^{G}\left(\;  \xy   0;/r.18pc/:
    (8,0)*{}="1";
    (0,0)*{}="2";
    (-8,0)*{}="3";
    (8,-10);"1" **\dir{-};
    "1";"2" **\crv{(8,8) & (0,8)} ?(0)*\dir{>} ?(1)*\dir{>};
    "2";"3" **\crv{(0,-8) & (-8,-8)}?(1)*\dir{>};
    "3"; (-8,10) **\dir{-};
    (14,-10)*{ \bfit{n}};
    (-5,-10)*{ \bfit{n+2}};
    \endxy \;\right)
    \; =
    \;
       \Gamma^{G}\left(\;\xy 0;/r.18pc/:
    (8,0)*{}="1";
    (0,0)*{}="2";
    (-8,0)*{}="3";
    (0,-10);(0,10)**\dir{-} ?(.5)*\dir{>};
    (5,-10)*{ \bfit{n}};
    (-8,-10)*{ \bfit{n+2}};
    \endxy\;\right)
\qquad
    \Gamma^{G}\left(\;\xy  0;/r.18pc/:
    (8,0)*{}="1";
    (0,0)*{}="2";
    (-8,0)*{}="3";
    (8,-10);"1" **\dir{-};
    "1";"2" **\crv{(8,8) & (0,8)} ?(0)*\dir{<} ?(1)*\dir{<};
    "2";"3" **\crv{(0,-8) & (-8,-8)}?(1)*\dir{<};
    "3"; (-8,10) **\dir{-};
    (12,-10)*{ \bfit{n}};
    (-6,-10)*{ \bfit{n-2}};
    \endxy\;\right)
    \; =
    \;
       \Gamma^{G}\left(\;\xy  0;/r.18pc/:
    (8,0)*{}="1";
    (0,0)*{}="2";
    (-8,0)*{}="3";
    (0,-10);(0,10)**\dir{-} ?(.5)*\dir{<};
    (6,-10)*{ \bfit{n}};
    (-7,-10)*{ \bfit{n-2}};
    \endxy\;\right)
\]
for all $n \in \Z$.
\end{lem}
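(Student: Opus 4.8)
The plan is to verify each of the four snake identities by writing out, as a composite of two bimodule maps, the image under $\Gamma^{G}$ of the corresponding cup followed by the corresponding cap, and checking that this composite is the identity map of the bimodule $\HG_{k,k+1}$ or $\HG_{k+1,k}$ attached to the straight strand. Since every map in Definition~\ref{def_biadjoint} is homogeneous of exactly the degree of the 2-morphism of $\Ucatq$ it realises, the grading shifts automatically match and I will suppress them, in keeping with the rest of this section.

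Take first the snake for $\cal{E}$ coming from the biadjunction $(\eta_n,\varepsilon_{n+2})$, namely the composite $\cal{E}\onen\To\cal{E}\cal{F}\cal{E}\onen\To\cal{E}\onen$ with first arrow $\cal{E}\eta_n$ and second arrow $\varepsilon_{n+2}\cal{E}\onen$. Under $\Gamma^{G}$ this is a map $\HG_{k,k+1}\to\HG_{k,k+1}$, which I evaluate on the generator $\xi^{m}$. Whiskering $\eta_{n}$ carries $\xi^{m}$ to $\bigl(\sum_{j=0}^{k}(-1)^{j}x_{j,n}\otimes\xi^{k-j}\bigr)\otimes\xi^{m}$ inside $\HG_{k,k+1}\otimes_{\HG_{k+1}}\HG_{k+1,k}\otimes_{\HG_{k}}\HG_{k,k+1}$; the two rightmost tensor factors, which the cap $\varepsilon_{n+2}$ collapses, already carry pure powers of $\xi$, while $x_{j,n}$ sits in the left $\HG_{k}$-action of the first factor. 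Applying \eqref{def_EF_cap} (with $n,k$ replaced by $n+2,k+1$), the factor $\xi^{k-j}\otimes\xi^{m}$ becomes $(-1)^{m-j}Y_{m-j,n+2}$, and collecting signs leaves $(-1)^{m}\sum_{j}x_{j,n}Y_{m-j,n+2}$ in $\HG_{k,k+1}$, with $x_{j,n}$ coming through $p_1^{*}$ and $Y_{m-j,n+2}$ through $p_2^{*}$. By equation \eqref{eq_dY} of Proposition~\ref{prop_relation} this is precisely $\xi^{m}$, so the composite is the identity.

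The remaining three diagrams go the same way. For the snake of $\cal{F}$ built from $(\eta_n,\varepsilon_{n+2})$ one uses the second of the two equal expressions for $\eta_{n}$ in \eqref{def_eq_FE_G}, so that the tensor factors fed to the cap again carry pure powers of $\xi$ while the unwanted $x_{\ell,n}$ ends up on an outer factor; the computation then closes using \eqref{eq_dY} together with commutativity of the cohomology rings. The two snakes built from the biadjunction $(\hat{\varepsilon}_{n},\hat{\eta}_{n-2})$ are entirely parallel with the roles of $\cal{E}$ and $\cal{F}$, of $x$ and $y$, and of $X$ and $Y$ interchanged: here the cup \eqref{def_eq_EF_G} contributes a sum of the $y_{j,n}$, the cap \eqref{def_FE_cap} contributes the $X$'s, and one finishes with equation \eqref{eq_dX} of Proposition~\ref{prop_relation} in place of \eqref{eq_dY}. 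Throughout, the only relations invoked are \eqref{eq_canonical_gen}, \eqref{eq_noncanonical_gen}, and Proposition~\ref{prop_relation}, all of which hold in the equivariant rings; the extra relations special to ordinary cohomology are never used, which is exactly the point of redoing the argument here.

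The main obstacle is organisational rather than mathematical: translating each diagram correctly into a composite of whiskered cups and caps — keeping straight which functor whiskers each one and on which side, hence which tensor factors a cup inserts and which a cap collapses — and then, by choosing between the two presentations of each cup in Definition~\ref{def_biadjoint} and by sliding left- and right-module elements across the tensor products, arranging that the input to each cap is a monomial $\xi_{1}^{m_{1}}\otimes\xi_{2}^{m_{2}}$ of the form its defining formula expects. Once the element has been put into that shape, the collapse back to $\xi^{m}$ (respectively $1$) is an immediate application of Proposition~\ref{prop_relation}.
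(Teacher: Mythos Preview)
Your proof is correct and follows essentially the same approach as the paper: evaluate the snake composite on a generator $\xi^{m}$, unwind the cup and cap from Definition~\ref{def_biadjoint}, and collapse the resulting sum $(-1)^{m}\sum_{j}x_{j,n}Y_{m-j,n+2}$ back to $\xi^{m}$ using Proposition~\ref{prop_relation}. Your citation of \eqref{eq_dY} (and \eqref{eq_dX} for the dual pair) is in fact the right one; the paper's reference to \eqref{eq_Y} at this step appears to be a typo, since \eqref{eq_Y} is the Grothendieck-ring relation $\sum x_{j,n}Y_{\beta-j,n}=\delta_{\beta,0}$ rather than the identity expressing $\xi^{\alpha}$ in $\HG_{k,k+1}$.
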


\begin{proof}
Consider the bimodule map
\begin{equation}
\Gamma^{G}\left(\;  \xy 0;/r.16pc/:
    (-8,0)*{}="1";
    (0,0)*{}="2";
    (8,0)*{}="3";
    (-8,-10);"1" **\dir{-};
    "1";"2" **\crv{(-8,8) & (0,8)} ?(0)*\dir{>} ?(1)*\dir{>};
    "2";"3" **\crv{(0,-8) & (8,-8)}?(1)*\dir{>};
    "3"; (8,10) **\dir{-};
    (14,10)*{ \bfit{n}};
    (-6,10)*{ \bfit{n+2}};
    \endxy \;\right) \maps \xi^{\alpha} \to \sum_{j=0}^{k}(-1)^{j}(-1)^{\alpha+k-j-k}
    Y_{\alpha-j,n+2} \cdot x_{j,n}
\end{equation}
After simplifying, the image of $\xi^{\alpha}$ is
\begin{eqnarray}
    \sum_{j=0}^{k}(-1)^{\alpha}
    Y_{\alpha-j,n+2} \cdot x_{j,n} \quad \refequal{\eqref{eq_Y}} \quad \xi^{\alpha}
\end{eqnarray}
which, depending on whether $\alpha \leq k$ or $\alpha>k$, uses that
$Y_{\ell,n+2}=0$ for $\ell<0$, or $x_{\ell,n}=0$ for $\ell>k$.  Since
$\xi^{\alpha}$ is fixed by $\Gamma^{G}\left(\;\xy   0;/r.14pc/:
    (-8,0)*{}="1";
    (0,0)*{}="2";
    (8,0)*{}="3";
    (0,-10);(0,10)**\dir{-} ?(.5)*\dir{>};
    (5,10)*{ \bfit{n}};
    (-8,10)*{ \bfit{n+2}};
    \endxy\;\right)$ we have established the first identity.  The others are
    proven similarly using \eqref{eq_Y}.
\end{proof}

\begin{lem}[Duality for $z_n$] \label{lem_dot_slide}
The equations
\begin{equation} \label{prop_eq_zdual}
 \Gamma^{G}\left(    \xy 0;/r.18pc/:
    (-8,5)*{}="1";
    (0,5)*{}="2";
    (0,-5)*{}="2'";
    (8,-5)*{}="3";
    (-8,-10);"1" **\dir{-};
    "2";"2'" **\dir{-} ?(.5)*\dir{<};
    "1";"2" **\crv{(-8,12) & (0,12)} ?(0)*\dir{<};
    "2'";"3" **\crv{(0,-12) & (8,-12)}?(1)*\dir{<};
    "3"; (8,10) **\dir{-};
    (15,-9)*{ \bfit{n+2}};
    (-12,9)*{ \bfit{n}};
    (0,4)*{\txt\large{$\bullet$}};
    \endxy \right)
    \quad =
    \quad
      \Gamma^{G}\left( \xy 0;/r.18pc/:
    (-8,0)*{}="1";
    (0,0)*{}="2";
    (8,0)*{}="3";
    (0,-10);(0,10)**\dir{-} ?(.5)*\dir{<};
    (10,5)*{ \bfit{n+2}};
    (-8,5)*{ \bfit{n}};
    (0,4)*{\txt\large{$\bullet$}};
    \endxy \right)
    \quad =
    \quad
    \Gamma^{G}\left( \xy 0;/r.18pc/:
    (8,5)*{}="1";
    (0,5)*{}="2";
    (0,-5)*{}="2'";
    (-8,-5)*{}="3";
    (8,-10);"1" **\dir{-};
    "2";"2'" **\dir{-} ?(.5)*\dir{<};
    "1";"2" **\crv{(8,12) & (0,12)} ?(0)*\dir{<};
    "2'";"3" **\crv{(0,-12) & (-8,-12)}?(1)*\dir{<};
    "3"; (-8,10) **\dir{-};
    (15,-9)*{ \bfit{n+2}};
    (-12,9)*{ \bfit{n}};
    (0,4)*{\txt\large{$\bullet$}};
    \endxy \right)
\end{equation}
of bimodule maps hold in $\GrGs$ for all $n \in \Z$.
\end{lem}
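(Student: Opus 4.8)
The plan is to verify that all three bimodule maps in \eqref{prop_eq_zdual} agree with multiplication by $\xi$ on the bimodule $\Gamma^G(\cal F\mathbf 1_{n+2})$ (which is $\HG_{k+1,k}$ up to grading shift, with $n=2k-N$). Since $\Gamma^G$ sends the dot on $\cal F\mathbf 1_{n+2}$ to precisely this multiplication map, the middle diagram \emph{is} this map by definition, so it remains only to identify the two zigzag diagrams with it. All three are degree-$2$ endomorphisms of the same bimodule, hence it suffices to compare them on the elements $\xi^m$, and, as elsewhere in this section, grading shifts can be carried along silently.

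First I would unwind the leftmost diagram into a composite of generating $2$-morphisms: up to whiskering by identities it is $\Gamma^G(\hat\eta_{n})$ applied after the dot $\Gamma^G(z_n)$ on an $\cal E$-strand, applied after the unit $\Gamma^G(\hat\varepsilon_{n+2})$ that creates the corresponding $\cal E\cal F$ pair. Inserting the formulas \eqref{def_eq_EF_G} and \eqref{def_FE_cap} together with the rule that $\Gamma^G(z_n)$ is multiplication by $\xi$, the image of $\xi^m$ becomes an explicit finite sum in $\HG_{k+1,k}$: an $X$-class multiplied by monomials in the $y_{j,n}$ and in $\xi$, carrying signs $(-1)^{\cdots}$ contributed by the cup and the cap.

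The conceptual heart of the argument is that a dot may be moved freely across the cup: sliding the dot down the $\cal E$-strand until it is adjacent to the bottom cup and then across it onto the outer $\cal F$-strand is exactly the content of Proposition~\ref{prop_dot_slide} (equivalently Corollary~\ref{cor_bimodule_maps}, which says that $\xi$ acting on one tensor factor of the cup's image equals $\xi$ acting on the other). After this move the zigzag carries no dot and straightens to the identity bimodule map by biadjointness (Lemma~\ref{lem_biadjoint}, already established), leaving precisely multiplication by $\xi$ on $\HG_{k+1,k}$, i.e. the middle map. Alternatively one can avoid the pictorial step and simplify the explicit sum directly, using the defining recursions \eqref{eq_X} for the $X_{\alpha,n}$ (and \eqref{eq_Y} for the $Y_{\alpha,n}$) exactly as in the proof of Lemma~\ref{lem_biadjoint}, and check that it telescopes to $\xi^{m+1}$ — here one distinguishes the cases $\alpha\le k$ and $\alpha>k$ so as to invoke the vanishing of $x_{\ell,n}$ for $\ell>k$ or of the $X$- and $Y$-classes in negative degree. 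The rightmost diagram is the mirror image and is treated identically, now sliding the dot across the cap (or, symmetrically, the cup on the other side) before straightening; Proposition~\ref{prop_dot_slide} again supplies the required compatibility. At every step one uses only the relation-free identities of Proposition~\ref{prop_relation} and the definitions of $X$ and $Y$, never the extra relations that hold in ordinary cohomology.

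I expect the only genuine obstacle to be bookkeeping rather than anything conceptual: pinning down which (co)units appear and into which tensor factor the inserted dot is multiplied, keeping the grading shifts and the numerous signs straight, and turning ``a dot slides across the cup'' into an honest identity of bimodule maps by invoking Proposition~\ref{prop_dot_slide}/Corollary~\ref{cor_bimodule_maps} rather than by appealing to a picture.
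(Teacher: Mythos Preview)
Your proposal is correct and follows essentially the same approach as the paper: use biadjointness (Lemma~\ref{lem_biadjoint}) to reduce the zigzag identities to the assertion that a dot slides across a cup, and then observe that this is exactly Proposition~\ref{prop_dot_slide}. The paper's proof is just the terse two-line version of what you wrote, first noting that \eqref{prop_eq_zdual} is equivalent (via the already-established biadjointness) to the pair of ``dot on one side of a cup equals dot on the other side'' identities, and then pointing out that these are, up to grading shift, precisely Proposition~\ref{prop_dot_slide}.
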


\begin{proof}
Since we have already established that $\Gamma^{G}$ preserves the biadjoint
structure of $\Ucatq$ in Lemma~\ref{lem_biadjoint}, the above
\eqref{prop_eq_zdual} is equivalent to proving
\begin{eqnarray}
    \text{ $\Gamma^G\left(\vcenter{\xy 0;/r.18pc/:
      (-3,8)*{};(0,5)*{}="2";(0,-5)*{}="2'"; (8,-5)*{}="3";
    "2";"2'" **\dir{-};
    "2'";"3" **\crv{(0,-12) & (8,-12)}?(0)*\dir{>}?(.97)*\dir{>};
    "3"; (8,5) **\dir{-};
    (15,-9)*{ \bfit{n}};
    (0,0)*{\txt\large{$\bullet$}};
    \endxy}\right)$}
   \;  =  \;\;
     \text{$ \Gamma^G\left(\vcenter{\xy 0;/r.18pc/:
    (0,5)*{}="2";(-3,8)*{}; (0,-5)*{}="2'"; (8,-5)*{}="3";
    "2";"2'" **\dir{-};
    "2'";"3" **\crv{(0,-12) & (8,-12)}?(0)*\dir{>}?(.97)*\dir{>};
    "3"; (8,5) **\dir{-};
    (15,-9)*{ \bfit{n}};
    (8,0)*{\txt\large{$\bullet$}};
    \endxy}\right)$} \qquad \quad
       \text{ $\Gamma^G\left(\vcenter{\xy 0;/r.18pc/:
    (0,5)*{}="2"; (0,-5)*{}="2'"; (8,-5)*{}="3"; (-3,8)*{};
    "2";"2'" **\dir{-};
    "2'";"3" **\crv{(0,-12) & (8,-12)}?(0)*\dir{<}?(.97)*\dir{<};
    "3"; (8,5) **\dir{-};
    (14,-9)*{ \bfit{n}};
    (0,0)*{\txt\large{$\bullet$}};
    \endxy}\right)$}
   \;   =  \;\;
     \text{$\Gamma^G\left(\vcenter{ \xy 0;/r.18pc/:
    (0,5)*{}="2"; (0,-5)*{}="2'"; (8,-5)*{}="3"; (-3,8)*{};
    "2";"2'" **\dir{-};
    "2'";"3" **\crv{(0,-12) & (8,-12)}?(0)*\dir{<}?(.97)*\dir{<};
    "3"; (8,5) **\dir{-};
    (15,-9)*{ \bfit{n}};
    (8,0)*{\txt\large{$\bullet$}};
    \endxy}\right)$} \nn
\end{eqnarray}
for all $n \in \Z$. Up to a grading shift, this is precisely the content of
Proposition~\ref{prop_dot_slide}.
\end{proof}

\begin{lem}[Duality for $U_n$]
The equation
\begin{equation} \label{eq_prop_Udual}
    \Gamma^{G}\left(\;\xy 0;/r.16pc/:
    (-9,8)*{}="1";
    (-3,8)*{}="2";
    (-9,-16);"1" **\dir{-};
    "1";"2" **\crv{(-9,14) & (-3,14)} ?(0)*\dir{<};
    (9,-8)*{}="1";
    (3,-8)*{}="2";
    (9,16);"1" **\dir{-};
    "1";"2" **\crv{(9,-14) & (3,-14)} ?(1)*\dir{>} ?(.05)*\dir{>};
    (-15,8)*{}="1";
    (3,8)*{}="2";
    (-15,-16);"1" **\dir{-};
    "1";"2" **\crv{(-15,20) & (3,20)} ?(0)*\dir{<};
    (15,-8)*{}="1";
    (-3,-8)*{}="2";
    (15,16);"1" **\dir{-};
    "1";"2" **\crv{(15,-20) & (-3,-20)} ?(.03)*\dir{>}?(1)*\dir{>};
    (0,0)*{\twoIu};
    (24,-9)*{ \bfit{n+4}};
    (-20,9)*{ \bfit{n}};
    \endxy \; \right)
    \quad =
    \quad    \Gamma^{G}\left(   \xy
    (0,0)*{\twoId};
    (6,0)*{\bfit{n+4}};
 \endxy\right)
 \quad =
    \quad
 \Gamma^{G}\left(\;
    \xy 0;/r.16pc/:
    (9,8)*{}="1";
    (3,8)*{}="2";
    (9,-16);"1" **\dir{-};
    "1";"2" **\crv{(9,14) & (3,14)} ?(0)*\dir{<};
    (-9,-8)*{}="1";
    (-3,-8)*{}="2";
    (-9,16);"1" **\dir{-};
    "1";"2" **\crv{(-9,-14) & (-3,-14)} ?(1)*\dir{>} ?(.05)*\dir{>};
    (15,8)*{}="1";
    (-3,8)*{}="2";
    (15,-16);"1" **\dir{-};
    "1";"2" **\crv{(15,20) & (-3,20)} ?(0)*\dir{<};
    (-15,-8)*{}="1";
    (3,-8)*{}="2";
    (-15,16);"1" **\dir{-};
    "1";"2" **\crv{(-15,-20) & (3,-20)} ?(0.03)*\dir{>} ?(1)*\dir{>};
    (0,0)*{\twoIu};
    (24,-9)*{ \bfit{n+4}};
    (-20,9)*{ \bfit{n}};
    \endxy\;\right)
\end{equation}
holds in $\GrGs$ for all $n \in \Z$.
\end{lem}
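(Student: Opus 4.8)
The plan is to prove \eqref{eq_prop_Udual} by a direct computation on canonical generators, parallel to the argument for $\Gamma_N$ in \cite{Lau3}, being careful to invoke only identities that hold in the polynomial rings $\HG_{k,\dots}$ and never the extra relations of ordinary cohomology. First I would observe that, since $\Gamma^{G}$ has already been shown to respect the biadjoint structure of $\Ucatq$ (Lemma~\ref{lem_biadjoint}), the two composite diagrams flanking $\Gamma^{G}(\twoId)$ in \eqref{eq_prop_Udual} are precisely the two diagrammatic rotations of the bimodule map $\Gamma^{G}(U_n)$ obtained using the explicit cup and cap bimodule maps of Definition~\ref{def_biadjoint}. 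By the evident left--right symmetry of these diagrams and of the defining formulas, it suffices to establish the first equality; the second then follows mutatis mutandis, using also that $\Gamma^{G}(U_n)$ and $\Gamma^{G}(\hat U_n)$ are each antisymmetric under interchanging the two $\xi$-factors (cf.\ Remark~\ref{rem_nil_zero}).

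Next I would unravel the left-hand side of \eqref{eq_prop_Udual} as the vertical composite of three layers: the bottom cups, the crossing $\Gamma^{G}(U_n)$ horizontally composed with identity maps on the outer strands, and the top caps. Evaluating on a canonical generator $\xi_1^{m_1}\otimes\xi_2^{m_2}$ of $\Gamma^{G}(\cal{F}\cal{F}\mathbf{1}_{n+4})\cong\HG_{k+2,k+1,k}$, the bottom units expand $1$ into the alternating sums $\sum(-1)^{j}y_{j,\bullet}\otimes\xi^{\bullet-j}$ of \eqref{def_eq_FE_G}--\eqref{def_eq_EF_G}; the crossing applies the divided-difference operator of \eqref{eq_def_gamma_Un} to the two $\xi$-variables it meets; and the top counits collapse the resulting powers $\xi_1^{p}\otimes\xi_1^{q}$ into $\pm X_{\bullet,\bullet}$ or $\pm Y_{\bullet,\bullet}$ via \eqref{def_FE_cap}--\eqref{def_EF_cap}. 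The output is an element of $\HG_{k+2,k+1,k}$ which must be matched with the two-term expression $\sum_{j=0}^{m_2-1}\xi_1^{m_1+m_2-1-j}\otimes\xi_2^{j}-\sum_{j=0}^{m_1-1}\xi_1^{m_1+m_2-1-j}\otimes\xi_2^{j}$ defining $\Gamma^{G}(\hat U_n)$ in Definition~\ref{def_nil_generator}.

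The simplification is driven by the identities already available in the equivariant setting: the convolution recursions $\sum_j x_{j,n}Y_{\ell-j,n}=\delta_{\ell,0}$ and $\sum_j y_{j,n}X_{\ell-j,n}=\delta_{\ell,0}$ of \eqref{eq_X}, the sliding relations \eqref{eq_Xslide}--\eqref{eq_dX} of Proposition~\ref{prop_relation}, the change-of-generator formulas \eqref{eq_canonical_gen}--\eqref{eq_noncanonical_gen}, and the vanishing $\Gamma^{G}(U_n)(\xi_1^{m}\otimes\xi_2^{m})=0$ of Remark~\ref{rem_nil_zero}. Carrying this out, the double summations telescope and every term involving an $X$ or $Y$ class cancels, leaving exactly the required two-term alternating sum. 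I expect the main obstacle to be not conceptual but purely the index bookkeeping: keeping the grading shifts, the signs produced by the powers of $\xi$ in the cap formulas, and the diagram's orientation conventions consistent through the telescoping, while checking at each step that the identity used is one of \eqref{eq_X}, \eqref{eq_Xslide}--\eqref{eq_dX}, or \eqref{eq_canonical_gen}--\eqref{eq_noncanonical_gen}, all of which hold in the polynomial rings $\HG_{k,\dots}$, rather than a relation peculiar to ordinary cohomology.
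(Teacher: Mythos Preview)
Your approach is essentially the paper's approach --- a direct unraveling of the composite bimodule map --- and it would go through. The paper's proof, however, makes one economy you overlook: rather than evaluating on a general generator $\xi_1^{m_1}\otimes\xi_2^{m_2}$, it evaluates only on $1\otimes\xi^{\alpha}$, noting that these elements together with the bimodule property determine the map completely. With this choice the crossing $\Gamma^G(U_n)$ annihilates the constant factor, the cap/cup expansions collapse to a single sum $\sum_{j=0}^{k}(-1)^{\alpha-1}x_{j,n}\otimes Y_{\alpha-1-j,n+4}$, and only two identities --- \eqref{eq_Yslide} followed by \eqref{eq_dY} --- are needed to reduce this to $\sum_{p=0}^{\alpha-1}\xi^{\alpha-1-p}\otimes\xi^{p}$, which is exactly $\Gamma^G(\hat U_n)(1\otimes\xi^{\alpha})$. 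So the ``main obstacle'' you anticipate, the index bookkeeping across double sums, largely evaporates; the full list of identities you invoke (\eqref{eq_X}, \eqref{eq_Xslide}--\eqref{eq_dX}, \eqref{eq_canonical_gen}--\eqref{eq_noncanonical_gen}) is more than what is actually needed. The paper then dispatches the other equality with ``proved similarly,'' just as you propose to do by symmetry.
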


\begin{proof}
It suffices to consider the element $1 \otimes \xi^{\alpha}$.  We have
\begin{eqnarray}
 \text{$ \Gamma^{G}\left(\;
    \xy 0;/r.16pc/:
    (9,8)*{}="1";
    (3,8)*{}="2";
    (9,-16);"1" **\dir{-};
    "1";"2" **\crv{(9,14) & (3,14)} ?(0)*\dir{<};
    (-9,-8)*{}="1";
    (-3,-8)*{}="2";
    (-9,16);"1" **\dir{-};
    "1";"2" **\crv{(-9,-14) & (-3,-14)} ?(1)*\dir{>} ?(.05)*\dir{>};
    (15,8)*{}="1";
    (-3,8)*{}="2";
    (15,-16);"1" **\dir{-};
    "1";"2" **\crv{(15,20) & (-3,20)} ?(0)*\dir{<};
    (-15,-8)*{}="1";
    (3,-8)*{}="2";
    (-15,16);"1" **\dir{-};
    "1";"2" **\crv{(-15,-20) & (3,-20)} ?(0.03)*\dir{>} ?(1)*\dir{>};
    (0,0)*{\twoIu};
    (24,-9)*{ \bfit{n+4}};
    (-20,9)*{ \bfit{n}};
    \endxy\;\right) \left( 1 \otimes \xi^{\alpha} \right)$}
    &=&
    \sum_{j=0}^{k}(-1)^{\alpha-1} x_{j,n} \otimes Y_{\alpha-1-j,n+4}.
\end{eqnarray}
We can write this as a sum from $j=0$ to $j=\alpha-1$ using that $x_{j,n}=0$ for
$j>k$ and $Y_{\ell,n+4}=0$ for $\ell<0$.  Now using \eqref{eq_Yslide} we have
\begin{eqnarray}
 &=&
 \sum_{j=0}^{\alpha-1}\sum_{p=0}^{\alpha-1-p}(-1)^{\alpha-1-p}
 x_{j,n} \otimes Y_{\alpha-1-j-p,n+2} \xi^{p} \\
 & \refequal{\eqref{eq_dY}} &
 \sum_{p=0}^{\alpha-1}\xi^{\alpha-1-p} \otimes  \xi^{p}.
\end{eqnarray}
This is the same as $\Gamma^{G}(\hat{U}_n)$ defined in
Definition~\ref{def_nil_generator}.  The other identity is proved similarly.
\end{proof}

For the remaining identities it is helpful to compute
  \begin{eqnarray}
    \Gamma^G\left(\;\xy
  (4,8)*{\bfit{n}};
  (0,-2)*{\cbub{n-1+\alpha}};
 \endxy \; \right)  \maps  1 \to
 (-1)^{\alpha} \sum_{\ell=0}^{N-k} y_{\ell,n}Y_{\alpha-\ell,n},
\qquad
    \Gamma^G\left(\;\xy
  (4,8)*{\bfit{n}};
  (0,-2)*{\ccbub{-n-1+\alpha}};
 \endxy \;\right)  \maps  1 \to
 (-1)^{\alpha} \sum_{\ell=0}^{k}
  x_{\ell,n}X_{\alpha-\ell,n} \nn
  \end{eqnarray}
which follow immediately from Definition~\ref{def_biadjoint}.

\begin{lem}[Positive degree of closed bubbles]
For all $m \geq 0$ we have
\[
\Gamma^G\left(\; \xy
 (-12,0)*{\cbub{m}};
 (-8,8)*{\bfit{n}};
 \endxy \;\right) =0 \quad \text{if $m< n-1$}, \qquad \quad
 \Gamma^G\left(\; \xy
 (-12,0)*{\ccbub{m}};
 (-8,8)*{\bfit{n}};
 \endxy\; \right) = 0 \quad \text{if $m< -n-1$,}
\]
for all $n \in \Z$.
\end{lem}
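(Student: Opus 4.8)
The plan is to read both vanishing statements straight off the formulas for $\Gamma^G$ on closed bubbles that were recorded just above the lemma, after reindexing the number of dots so that each degree hypothesis becomes the condition ``$\alpha<0$''. No genuinely new computation should be required.

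For the clockwise bubble with $m\geq 0$ dots in region $n$, I would write $m=(n-1)+\alpha$, so that the hypothesis $m<n-1$ is exactly $\alpha<0$ (and, since $m\geq 0$, also $1-n\leq\alpha$, forcing $n\geq 2$). The computation preceding the lemma then says that $\Gamma^G$ sends this bubble to the $(\HG_k,\HG_k)$-bimodule map $\HG_k\to\HG_k$ determined by $1\mapsto(-1)^{\alpha}\sum_{\ell=0}^{N-k}y_{\ell,n}Y_{\alpha-\ell,n}$. Since $Y_{\beta,n}=0$ for $\beta<0$ by definition, and $\alpha-\ell\leq\alpha<0$ for every $\ell\geq 0$, every summand on the right vanishes, so this map kills $1$. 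Because a closed bubble is a $2$-endomorphism of the identity $1$-morphism $\onen$ (up to grading shift), $\Gamma^G$ carries it to a bimodule endomorphism of $\HG_k$, which --- $\HG_k$ being commutative --- is just multiplication by its value on $1$; hence it is the zero map. I would then dispatch the counterclockwise case in exactly the same way, writing $m=(-n-1)+\alpha$ so that $m<-n-1$ becomes $\alpha<0$, and using the companion formula $1\mapsto(-1)^{\alpha}\sum_{\ell=0}^{k}x_{\ell,n}X_{\alpha-\ell,n}$ together with $X_{\beta,n}=0$ for $\beta<0$.

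The one point I would be careful to check is that the bubble formulas invoked above remain valid for $\alpha<0$: they are obtained purely from the cup and cap formulas of Definition~\ref{def_biadjoint} by composing a cup, the genuine $m$-fold dot $z_n^m$, and a cap, and this derivation goes through for every $m\geq 0$, in particular for the range $0\leq m<n-1$ at issue here --- the bubbles in the lemma are honest $2$-morphisms, not fake bubbles, so the inductive fake-bubble definition plays no role. Equivalently, and perhaps more conceptually, the statement is forced by the fact that $\Gamma^G$ preserves degrees: the clockwise bubble with $m$ dots in region $n$ has degree $2(m-n+1)$ and the counterclockwise one degree $2(m+n+1)$, both strictly negative under the respective hypotheses, whereas $\HG_k$ is a non-negatively graded ring with $(\HG_k)_0=\Q$ and therefore admits no bimodule endomorphism of negative degree. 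I do not anticipate a real obstacle; the whole content is the reindexing that turns the degree bound into ``$\alpha<0$'' together with the vanishing $X_{\beta,n}=Y_{\beta,n}=0$ for $\beta<0$.
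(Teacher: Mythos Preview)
Your proposal is correct and follows essentially the same approach as the paper, which simply says the vanishing ``is clear from the definitions above and the positive degree of $x_{j,n}$, $y_{j,n}$, $X_{j,n}$, and $Y_{j,n}$.'' You have spelled out in detail both readings of that one-line argument --- the explicit vanishing $X_{\beta,n}=Y_{\beta,n}=0$ for $\beta<0$ after reindexing, and the degree-count alternative --- either of which suffices.
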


\begin{proof}
This is clear from the definitions above and the positive degree of $x_{j,n}$,
$y_{j,n}$, $X_{j,n}$, and $Y_{j,n}$.
\end{proof}

\begin{lem}[Reduction to bubbles] The equations
\begin{eqnarray}
  \text{$\Gamma^{G}\left(\xy 0;/r.15pc/:
  (14,8)*{\bfit{n}};
  (0,0)*{\twoIu};
  (-3,-12)*{\bbsid};
  (-3,8)*{\bbsid};
  (3,8)*{}="t1";
  (9,8)*{}="t2";
  (3,-8)*{}="t1'";
  (9,-8)*{}="t2'";
   "t1";"t2" **\crv{(3,14) & (9, 14)};
   "t1'";"t2'" **\crv{(3,-14) & (9, -14)};
   (9,0)*{\bbf{}};
 \endxy\right)$} = \Gamma^{G}\left(\;-\sum_{\ell=0}^{-n}
   \xy 0;/r.18pc/:
  (14,8)*{\bfit{n}};
  (0,0)*{\bbe{}};
  (14,-2)*{\cbub{n-1+\ell}};
  (0,6)*{\bullet}+(5,-1)*{\scs \;\; -n-\ell};
 \endxy \;\;\right)\label{eq_lem_reductionI}
\qquad
  \text{$ \Gamma^{G}\left(\xy 0;/r.15pc/:
  (-12,8)*{\bfit{n}};
  (0,0)*{\twoIu};
  (3,-12)*{\bbsid};
  (3,8)*{\bbsid};
  (-9,8)*{}="t1";
  (-3,8)*{}="t2";
  (-9,-8)*{}="t1'";
  (-3,-8)*{}="t2'";
   "t1";"t2" **\crv{(-9,14) & (-3, 14)};
   "t1'";"t2'" **\crv{(-9,-14) & (-3, -14)};
   (-9,0)*{\bbf{}};
 \endxy\right)$} =
 \Gamma^{G}\left(\; \sum_{j=0}^{n}
   \xy 0;/r.18pc/:
  (-12,8)*{\bfit{n}};
  (0,0)*{\bbe{}};
  (-12,-2)*{\ccbub{-n-1+j}};
  (0,6)*{\bullet}+(5,-1)*{\scs n-j};
 \endxy\;\;\right) \label{eq_lem_reductionII} \nn
\end{eqnarray}
of bimodule maps hold in $\GrGs$ for all $n\in \Z$.
\end{lem}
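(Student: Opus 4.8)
The plan is to check each of the two equations by direct computation, evaluating both sides as bimodule maps on the monomials $\xi^m \in \HG_{k,k+1}$ (the powers of $\xi$ span $\HG_{k,k+1}$ over the left $\HG_k$--action, so this suffices), following the structure of the analogous verification in \cite{Lau3} while making sure that no step silently uses a relation that holds only in the ordinary cohomology rings. For the first equation, the left--hand side is the composite $\cal{E}\onen \to \cal{E}\cal{F}\cal{E}\onen \to \cal{E}\cal{F}\cal{E}\onen \to \cal{E}\onen$ built from a unit, the nilHecke crossing, and a counit; under $\Gamma^G$ it becomes the composite of the bimodule map of Definition~\ref{def_biadjoint} (either \eqref{def_eq_FE_G} or \eqref{def_eq_EF_G}), the divided--difference map \eqref{eq_def_gamma_Un} of Definition~\ref{def_nil_generator}, and the cap map (\eqref{def_FE_cap} or \eqref{def_EF_cap}). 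So the first step is to push $\xi^m$ through this chain: the unit inserts $\sum_j (-1)^j x_{j,n}\otimes\xi^{k-j}$ (with $\xi^m$ riding along on the outer factor), the crossing replaces a monomial $\xi^a\otimes\xi^b$ by the truncated sum $\sum_{i=0}^{a-1}\xi^{a+b-1-i}\otimes\xi^i-\sum_{i=0}^{b-1}\xi^{a+b-1-i}\otimes\xi^i$, and the counit collapses the outcome into a $\pm$ combination of the classes $X_{*,n}$ or $Y_{*,n}$.

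The second step is to recognize what comes out as the image under $\Gamma^G$ of the bubble sum on the right--hand side. Here I would invoke the values of $\Gamma^G$ on the closed bubbles computed just before the lemma --- that $\Gamma^G$ of the bubble labelled $n-1+\alpha$ sends $1$ to $(-1)^\alpha\sum_{\ell} y_{\ell,n}Y_{\alpha-\ell,n}$, and $\Gamma^G$ of the bubble labelled $-n-1+\alpha$ sends $1$ to $(-1)^\alpha\sum_\ell x_{\ell,n}X_{\alpha-\ell,n}$ --- precomposed with the appropriate power of the dot, i.e.\ multiplication by $\xi^{-n-\ell}$ or $\xi^{n-j}$. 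The two sides are then matched by repeated use of Proposition~\ref{prop_relation}, above all the identities \eqref{eq_dY} and \eqref{eq_dX} expressing $\xi^\alpha$ as $\pm\sum_j x_{\alpha-j,n}Y_{j,n+2}$ and $\pm\sum_j X_{\alpha-j,n}y_{j,n+2}$, together with the slide relations \eqref{eq_Xslide}--\eqref{eq_Yslide}, the defining recursions for $X_{\alpha,n}$ and $Y_{\alpha,n}$, the passage between the two weights via \eqref{eq_canonical_gen}--\eqref{eq_noncanonical_gen}, and the conventions $x_{j,n}=0$ for $j\notin[0,k]$, $y_{\ell,n}=0$ for $\ell\notin[0,N-k]$, and $X_{\alpha,n}=Y_{\beta,n}=0$ for negative indices. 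The second equation is then the mirror computation, obtained by swapping the roles of $\cal{E}$ and $\cal{F}$, of the $x$-- and $y$--variables, and of \eqref{eq_dY} and \eqref{eq_dX}.

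The difficulty is entirely bookkeeping rather than conceptual: one must carry the grading shifts through every factor of the composite, keep the two weights straight (distinguishing $x_{j,n}$ from $x_{j,n+2}$ and $y_{\ell,n}$ from $y_{\ell,n+2}$), move elements across tensor products only through the legitimate relations, and --- the one point that genuinely differs from \cite{Lau3} --- arrange each summation--range manipulation so that it uses only the conventions $x_{k+1,n}=0$, $y_{N-k+1,n}=0$, etc., and never the Chern--class relation \eqref{eq_chern_rel}, which is absent in the equivariant setting. I expect the sharpest point of the calculation to be showing that, after the counit is applied, the double sum produced by the crossing telescopes into precisely the single sum $-\sum_{\ell=0}^{-n}(\text{bubble})\cdot\xi^{-n-\ell}$ (respectively $\sum_{j=0}^{n}(\text{bubble})\cdot\xi^{n-j}$), with the correct cutoff on the outer index and the correct signs; this is where the iterated application of \eqref{eq_dY}/\eqref{eq_dX} and the recursions for $X$ and $Y$ do the real work.
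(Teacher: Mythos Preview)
Your plan is sound but considerably more elaborate than what the paper actually does. The paper evaluates only on $1 \in \HG_{k,k+1}$, not on arbitrary $\xi^m$; this suffices because $\xi = x_{1,n+2} - x_{1,n}$ lies in the span of the left and right bimodule actions, so any $(\HG_{k+1},\HG_k)$--bimodule map out of $\HG_{k,k+1}$ is already determined by its value on $1$. With that reduction in hand, the composite on the left of the first equation sends $1$ directly to $-\sum_{\ell=0}^{N-k}\sum_{j=0}^{N-k-\ell-1}(-1)^{-n-j}\,\xi^{j}\, y_{\ell,n}\,Y_{-n-\ell-j,n}$, and the entire remaining argument is a single observation: $Y_{-n-\ell-j,n}$ vanishes for $j > -n-\ell$, so the inner sum may be cut off at $j=-n-\ell$, which (for $k>0$) is within the original range. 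That is already the image of $1$ under the right--hand side. No telescoping, no appeal to Proposition~\ref{prop_relation}, no sliding via \eqref{eq_Xslide}--\eqref{eq_dX}, no manipulation of \eqref{eq_canonical_gen}--\eqref{eq_noncanonical_gen} is needed. Your approach would also reach the goal, and it has the virtue of making the dependence on $m$ explicit, but the paper's shortcut of working only at $1$ collapses the ``sharpest point'' you anticipated into a one--line range adjustment. The second equality is handled in the paper by the obvious mirror of this argument.
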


\begin{proof}
Consider the first equality on $1 \in \HG_{k,k+1}$. We have
\begin{eqnarray}
 \text{$\Gamma^{G}\left(\xy 0;/r.15pc/:
  (14,8)*{\bfit{n}};
  (0,0)*{\twoIu};
  (-3,-12)*{\bbsid};
  (-3,8)*{\bbsid};
  (3,8)*{}="t1";
  (9,8)*{}="t2";
  (3,-8)*{}="t1'";
  (9,-8)*{}="t2'";
   "t1";"t2" **\crv{(3,14) & (9, 14)};
   "t1'";"t2'" **\crv{(3,-14) & (9, -14)};
   (9,0)*{\bbf{}};
 \endxy\right)$}
\maps 1 \;\; \to \;\;
 -\sum_{\ell=0}^{N-k}\sum_{j=0}^{N-k-\ell-1}(-1)^{-n-j} \xi^{j}
 y_{\ell,n}Y_{-n-\ell-j,n}.
\end{eqnarray}
But the $Y_{-n-\ell-j,n}$ are only nonzero when $j\leq -n-\ell=N-2k-\ell$.  For
$k>0$ this implies $-n-\ell \leq N-k-\ell-1$.  After changing the $j$-summation
to reflect this fact, so that $0 \leq j \leq -n-\ell$, the above is equal to the
image of $1 \in \HG_{k,k+1}$ under the bimodule map on the right hand side of the
first equality above. The second equality is proven similarly.
\end{proof}

\begin{lem}[NilHecke action] The equations
\begin{equation}
 \Gamma^G\left(\;\xy 0;/r.18pc/:
  (0,-8)*{\twoIu};
  (0,8)*{\twoIu};
  (8,8)*{\bfit{n}};
 \endxy\;\right)
 \;\;= \;\; 0, \qquad
 \Gamma^G\left(\;\vcenter{ \xy 0;/r.18pc/:
    (0,0)*{\twoIu};
    (6,16)*{\twoIu};
    (-3,8);(-3,24) **\dir{-}?(1)*\dir{>};
    (0,32)*{\twoIu};
    (9,-8);(9,8) **\dir{-};
    (9,24);(9,42) **\dir{-}?(1)*\dir{>};
    (14,16)*{\bfit{n}};
 \endxy} \; \right)
 \;\;
 =
 \;\;\Gamma^G\left(\;\;
  \vcenter{\xy 0;/r.18pc/:
    (0,0)*{\twoIu};
    (-6,16)*{\twoIu};
    (3,8);(3,24) **\dir{-}?(1)*\dir{>};
    (0,32)*{\twoIu};
    (-9,-8);(-9,8) **\dir{-};
    (-9,24);(-9,42) **\dir{-}?(1)*\dir{>};
    (8,16)*{\bfit{n}};
 \endxy}\;\right) , \nn
\end{equation}
\begin{eqnarray}
  \Gamma^G\left(\;\;\xy 0;/r.18pc/:
  (3,9);(3,-9) **\dir{-}?(.5)*\dir{<}+(2.3,0)*{};
  (-3,9);(-3,-9) **\dir{-}?(.5)*\dir{<}+(2.3,0)*{};
  (8,2)*{\bfit{n}};
 \endxy\right)
  \; = \;
  \Gamma^G\left(\xy 0;/r.18pc/:
  (0,0)*{\twoIu};
  (-2,-5)*{ \bullet};
  (8,2)*{\bfit{n}};
 \endxy\right)
  - \;
  \Gamma^G\left(\xy 0;/r.18pc/:
  (0,0)*{\twoIu};
  (2,5)*{ \bullet};
  (8,2)*{\bfit{n}};
 \endxy\right)
 \; = \;
  \Gamma^G\left(\xy 0;/r.18pc/:
  (0,0)*{\twoIu};
  (-2,5)*{ \bullet};
  (8,2)*{\bfit{n}};
 \endxy\right)
  - \;
 \Gamma^G\left( \xy 0;/r.18pc/:
  (0,0)*{\twoIu};
  (2,-5)*{ \bullet};
  (8,2)*{\bfit{n}};
 \endxy \right) \nn
\end{eqnarray}
hold in $\GrGs$ for all $n \in \Z$.
\end{lem}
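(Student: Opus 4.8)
The plan is to unwind the diagrams into the explicit formulas of Definition~\ref{def_nil_generator} and to recognize the resulting bimodule maps as the classical divided difference (Demazure) operators on polynomial rings. Concretely, under the identification $\HG_{k,k+1,\ldots,k+a}\cong\Q[x_{1,n},\ldots,x_{k,n};\xi_1,\ldots,\xi_a;y_{1,n+2a},\ldots,y_{N-k-a,n+2a}]$, the 2-morphism $\Gamma^G(U_n)$ placed on the $i$-th and $(i+1)$-st strands acts as the operator $\partial_i$ given by $\partial_i(f)=\dfrac{f-s_i(f)}{\xi_i-\xi_{i+1}}$, where $s_i$ interchanges $\xi_i$ and $\xi_{i+1}$ and fixes everything else: evaluating this on a monomial $\xi_i^{m_1}\xi_{i+1}^{m_2}$ reproduces $\sum_{j=0}^{m_1-1}\xi_i^{m_1+m_2-1-j}\xi_{i+1}^{j}-\sum_{j=0}^{m_2-1}\xi_i^{m_1+m_2-1-j}\xi_{i+1}^{j}$, which is exactly the first displayed formula of Definition~\ref{def_nil_generator} (the formula for $\hat U_n$ is its negative, and the statements for the $\cal{F}$-side follow from these by biadjointness, as in the definition of $\Ucatq$). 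The 2-morphism $z_n$ acts by multiplication by the corresponding variable $\xi_i$. So each of the three equations to be checked becomes an identity between operators of this form.

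First I would record that $\partial_i(f)$ is always $s_i$-invariant, since $s_i\partial_i(f)=\partial_i(f)$ by a one-line computation. This immediately gives the first equation, $\Gamma^G(U_n)^2=0$, because $\partial_i$ annihilates every $s_i$-invariant element. The same observation shows $\partial_i$ is linear over the subring of $s_i$-invariants, which contains $\HG_k$, $\HG_{k+2}$, and all of $\Q[x_{\cdot,n};y_{\cdot,n+2a}]$ together with the elementary symmetric functions in $\xi_i,\xi_{i+1}$; this is what makes $\Gamma^G(U_n)$ a well-defined bimodule map (compatible with the tensor products over the intermediate rings $\HG_{k+j}$) and lets every identity be checked one monomial at a time. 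For the second equation I would verify the braid relation $\partial_i\partial_{i+1}\partial_i=\partial_{i+1}\partial_i\partial_{i+1}$ on $\Q[\xi_i,\xi_{i+1},\xi_{i+2}]$, a classical identity for Newton divided differences that is readily checked on monomials $\xi_i^a\xi_{i+1}^b\xi_{i+2}^c$. For the third equation I would check the two twisted Leibniz rules $\partial_i\circ(\text{mult by }\xi_i)=\id+(\text{mult by }\xi_{i+1})\circ\partial_i$ and $(\text{mult by }\xi_i)\circ\partial_i=\id+\partial_i\circ(\text{mult by }\xi_{i+1})$, each of which follows from $\partial_i(\xi_i f)=f+\xi_{i+1}\partial_i(f)$ and $\xi_i\partial_i(f)=f+\partial_i(\xi_{i+1}f)$, proved by substituting the definition of $\partial_i$ and cancelling; reading the dots in the diagram back according to the conventions of Section~\ref{subsec_Ucat} identifies these with the two claimed equalities.

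The main obstacle is not any single computation — each of $\partial_i^2=0$, the braid relation, and the Leibniz rules is standard — but rather the bookkeeping: one must track exactly which $\xi$-variable each dot and each crossing acts on after the chain of identifications of Section~\ref{subsec_Ucat} and Section~\ref{subsex_onestep}, and must confirm that $\partial_i$ descends to the tensor products $\otimes_{\HG_{k+j}}$ (equivalently, commutes with multiplication by the images of the non-canonical generators $x_{\cdot,n+2j}$, $y_{\cdot,n+2j}$), which is precisely the $s_i$-invariance remark above. Crucially, none of these verifications uses the Grassmannian relations that hold in ordinary cohomology; every identity is a formal consequence of the divided difference calculus on honest polynomial rings, which is exactly why the passage to equivariant cohomology causes no trouble here.
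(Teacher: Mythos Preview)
Your proposal is correct and is essentially the same argument the paper invokes: the paper simply defers to the identical computation for $\Gamma_N$ in \cite{Lau3}, noting that it never used the Grassmannian relations, and that computation is exactly the standard divided-difference calculus ($\partial_i^2=0$, the braid relation, and the Leibniz rules) you spell out. Your extra remarks on $s_i$-invariance ensuring well-definedness over the intermediate tensor factors and on the irrelevance of the ordinary-cohomology relations are precisely the points the paper is implicitly relying on.
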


\begin{proof}
The proof is identical to the proof of the same relation for the 2-functor
$\Gamma$ in \cite{Lau3}.  That proof never made use of the relations imposed on
the canonical generators for the ordinary cohomology rings of the iterated flag
varieties.
\end{proof}

\begin{lem}[Identity decomposition]
The equations
\begin{eqnarray}
\Gamma^G\left(\;\vcenter{\xy 0;/r.18pc/:
  (-8,0)*{};
  (8,0)*{};
  (-4,10)*{}="t1";
  (4,10)*{}="t2";
  (-4,-10)*{}="b1";
  (4,-10)*{}="b2";
  "t1";"b1" **\dir{-} ?(.5)*\dir{<};
  "t2";"b2" **\dir{-} ?(.5)*\dir{>};
  (14,6)*{\bfit{n}};
  \endxy}\;\right)
\quad = \quad
 \Gamma^G\left(\; -\;\;\vcenter{\xy 0;/r.18pc/:
  (0,0)*{\FEtEF};
  (0,-10)*{\EFtFE};
  (14,2)*{\bfit{n}};
  \endxy}\;\right)
  \quad + \quad
   \text{$\Gamma^G\left(\; \sum_{\ell=0}^{n-1} \sum_{j=0}^{\ell}
    \vcenter{\xy 0;/r.18pc/:
    (-8,0)*{};
  (8,0)*{};
  (-4,-15)*{}="b1";
  (4,-15)*{}="b2";
  "b2";"b1" **\crv{(5,-8) & (-5,-8)}; ?(.1)*\dir{<} ?(.9)*\dir{<}
  ?(.8)*\dir{}+(0,-.1)*{\bullet}+(-5,2)*{\scs \ell-j};
  (-4,15)*{}="t1";
  (4,15)*{}="t2";
  "t2";"t1" **\crv{(5,8) & (-5,8)}; ?(.15)*\dir{>} ?(.9)*\dir{>}
  ?(.4)*\dir{}+(0,-.2)*{\bullet}+(3,-2)*{\scs n-1-\ell};
  (0,0)*{\ccbub{\scs -n-1+j}};
  (16,6)*{\bfit{n}};
  \endxy}\;\right)$}
  \nn\\ \nn \\
  \Gamma^G\left(\; \vcenter{\xy 0;/r.18pc/:
  (-8,0)*{};
  (8,0)*{};
  (14,6)*{\bfit{n}};
  (-4,10)*{}="t1";
  (4,10)*{}="t2";
  (-4,-10)*{}="b1";
  (4,-10)*{}="b2";
  "t1";"b1" **\dir{-} ?(.5)*\dir{>};
  "t2";"b2" **\dir{-} ?(.5)*\dir{<};
  \endxy}\;\right)
\quad = \quad
 \Gamma^G\left(\;-\;\;
 \vcenter{\xy 0;/r.18pc/:
  (0,0)*{\EFtFE};
  (0,-10)*{\FEtEF};
  (14,2)*{\bfit{n}};
  \endxy}\;\right)
  \quad + \quad
\text{$\Gamma^G\left(\;\sum_{\ell=0}^{-n-1} \sum_{j=0}^{\ell}
    \vcenter{\xy 0;/r.18pc/:
    (-8,0)*{};
  (8,0)*{};
  (-4,-15)*{}="b1";
  (4,-15)*{}="b2";
  "b2";"b1" **\crv{(5,-8) & (-5,-8)}; ?(.1)*\dir{>} ?(.95)*\dir{>}
  ?(.8)*\dir{}+(0,-.1)*{\bullet}+(-5,2)*{\scs \ell-j};
  (-4,15)*{}="t1";
  (4,15)*{}="t2";
  "t2";"t1" **\crv{(5,8) & (-5,8)}; ?(.15)*\dir{<} ?(.8)*\dir{<}
  ?(.4)*\dir{}+(0,-.2)*{\bullet}+(3,-2)*{\scs -n-1-\ell};
  (0,0)*{\cbub{\scs n-1+j}};
  (16,6)*{\bfit{n}};
  \endxy}\;\right)$} \nn
\end{eqnarray}
hold for all $n \in \Z$.
\end{lem}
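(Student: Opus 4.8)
The plan is to prove each of the two displayed equations by evaluating both sides on a generic element, in the spirit of the proofs of Lemmas~\ref{lem_biadjoint} and \ref{lem_dot_slide}. The left-hand side of the first equation is $\Gamma^G$ applied to the identity $2$-morphism of $\cal{E}\cal{F}\onen$, whose image is the identity bimodule map of $\HG_{k,k-1}\otimes_{\HG_{k-1}}\HG_{k-1,k}$ (up to the grading shift suppressed throughout this section); by the conventions of Section~\ref{subsec_define_gamma} a bimodule homomorphism out of this module is determined by its values on the elements $\xi_1^{a}\otimes\xi_2^{b}$, compatibility with the left and right $\HG_k$-actions being automatic. So it suffices to show that the composite on the right-hand side also fixes every $\xi_1^{a}\otimes\xi_2^{b}$. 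The second equation is treated identically, now inside $\HG_{k,k+1}\otimes_{\HG_{k+1}}\HG_{k+1,k}$, the image of $\mathrm{id}_{\cal{F}\cal{E}\onen}$; it either follows from the first by the symmetry $x_{j,n}\leftrightarrow y_{j,n}$, $X_{j,n}\leftrightarrow Y_{j,n}$, $k\leftrightarrow N-k$ (that is, $n\leftrightarrow-n$) that swaps $\cal{E}$ and $\cal{F}$, or is checked by an entirely parallel computation; so I would concentrate on the first.

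Next I would assemble the right-hand side from maps already computed in this section. The first term is the negative of the endomorphism of $\cal{E}\cal{F}\onen$ factoring as $\cal{E}\cal{F}\onen\To\cal{F}\cal{E}\onen\To\cal{E}\cal{F}\onen$ through the biadjunction units and counits; under $\Gamma^G$ it is the corresponding composite of the cup and cap formulas \eqref{def_eq_FE_G}--\eqref{def_EF_cap} of Definition~\ref{def_biadjoint} (with the dot maps of Definition~\ref{def_nil_generator} inserted where the picture requires), so it sends $\xi_1^{a}\otimes\xi_2^{b}$ to an explicit finite sum in $x_{j,n},y_{j,n},X_{j,n},Y_{j,n}$ and powers of $\xi$. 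Each summand of the bubble correction $\sum_{\ell=0}^{n-1}\sum_{j=0}^{\ell}$ is likewise a vertical composite: a cap $\cal{E}\cal{F}\onen\to\onen$, followed by the scalar that $\Gamma^G$ assigns to the relevant dotted closed bubble (the value $1\mapsto(-1)^{j}\sum_{\ell}x_{\ell,n}X_{j-\ell,n}$ computed above from Definition~\ref{def_biadjoint}), followed by a cup $\onen\to\cal{E}\cal{F}\onen$, with the extra dots ($\ell-j$ on one strand and $n-1-\ell$ on the other) evaluated via the dot formula. Substituting these gives each correction term explicitly on $\xi_1^{a}\otimes\xi_2^{b}$.

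The heart of the argument is then to add up the zig-zag term and the bubble corrections and to check that the total collapses to $\xi_1^{a}\otimes\xi_2^{b}$. This is a finite algebraic manipulation carried out only with the identities of Section~\ref{subsex_onestep}: the defining recursions \eqref{eq_Y} for the $X$'s and $Y$'s, the change-of-generator relations \eqref{eq_canonical_gen}--\eqref{eq_noncanonical_gen}, the dot-slide identities of Proposition~\ref{prop_relation}, and the conventions $x_{j,n}=0$ for $j\notin[0,k]$, $y_{\ell,n}=0$ for $\ell\notin[0,N-k]$. After reindexing, repeated use of \eqref{eq_Y} makes the bulk of the zig-zag term telescope against the bubble sum, while the range $0\le\ell\le n-1$ of the correction exactly accounts for the $\max(n,0)$ ``extra'' copies of $\onen$; what remains is precisely $\xi_1^{a}\otimes\xi_2^{b}$. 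The computation runs in parallel with the one for $\Gamma_N$ in \cite{Lau3}; as with the other lemmas of this section, the real point is to verify it invokes only the identities just listed, all of which survive in the equivariant cohomology rings, and never the Chern-class relation \eqref{eq_chern_rel}, which does not hold there.

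I expect the main obstacle to be bookkeeping rather than anything conceptual: organizing the nested summations — coming from the composite cup and cap, from the bubble, and from the extra dots — so that the cancellations via \eqref{eq_Y} are transparent, and tracking the signs together with the $n\ge0$ versus $n<0$ case split, since for each equation the correction sum is empty in one of the two regimes. A secondary subtlety worth flagging is that in \cite{Lau3} the analogue of this relation reflects an honest direct-sum decomposition $\cal{E}\cal{F}\onen\cong\cal{F}\cal{E}\onen\oplus\onen^{\oplus n}$ of functors, available because the cohomology rings there are finite-dimensional; here the same relation must hold instead as a purely formal identity of bimodule maps between the polynomial rings $\HG_{k\pm1,k}\otimes_{\HG_{k\pm1}}\cdots$, which is precisely why the ``use no relations from ordinary cohomology'' discipline has to be maintained at every line.
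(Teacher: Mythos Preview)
Your plan is essentially the paper's: verify the relation by direct computation on generating elements of the bimodule, using only the identities of Section~\ref{subsex_onestep} and never the Grassmannian relation~\eqref{eq_chern_rel}. Two tactical simplifications the paper makes are worth flagging. First, rather than checking the identity as an endomorphism of $\cal{E}\cal{F}\onen$ directly, it uses Lemma~\ref{lem_biadjoint} to bend strands and rewrite the first equation in the equivalent form~\eqref{eq_alternativeid}, which cuts down the number of nested cup/cap composites to be tracked. Second, it computes only on elements of the form $1\otimes\xi^{\alpha}$ rather than on general $\xi_1^{a}\otimes\xi_2^{b}$, observing that these together with Proposition~\ref{prop_relation} and the bimodule property determine the map completely; this halves the number of free exponents in the bookkeeping you correctly anticipate as the main obstacle.
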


\begin{proof}
Using Lemma~\ref{lem_biadjoint} proving the first identity is equivalent to
proving
\begin{equation} \label{eq_alternativeid}
 \Gamma\left(\; \xy 0;/r.18pc/:
  (-8,0)*{};
  (8,0)*{};
  (-4,-11)*{}="b1";
  (4,-11)*{}="b2";
  "b2";"b1" **\crv{(5,-2) & (-5,-2)}; ?(.15)*\dir{>} ?(.9)*\dir{>};
  (-4,11)*{}="t1";
  (4,11)*{}="t2";
  "t2";"t1" **\crv{(5,2) & (-5,2)}; ?(.15)*\dir{<} ?(.8)*\dir{<};
  (16,0)*{\bfit{n-2}};
  \endxy\;\right)
  \quad = \quad
 \Gamma\left(\; -
  \xy 0;/r.18pc/:
  (20,0)*{\bfit{n-2}};
  (-6,0)*{\twoId};
  (6,0)*{\twoIu};
  (9,-12)*{\bbsid};
  (-9,-12)*{\bbsid};
  (9,10)*{\bbsid};
  (-9,10)*{\bbsid};
  (-3,8)*{}="t1";
  (3,8)*{}="t2";
  "t1";"t2" **\crv{(-3,14) & (3, 14)};
  (-3,-8)*{}="t1";
  (3,-8)*{}="t2";
  "t1";"t2" **\crv{(-3,-14) & (3, -14)} ?(.1)*\dir{>} ?(1)*\dir{>};
 \endxy\;\right)
 \;\; + \;
 \Gamma\left(\; \sum_{\ell=0}^{n-1}
 \sum_{j=0}^{\ell}
    \xy
  (20,0)*{\bfit{n-2}};
  (12,0)*{\bbe{}};
  (0,-3)*{\ccbub{-n-1+j}};
  (-12,6)*{\bullet}+(6,1)*{\scs n-1-\ell};
  (12,6)*{\bullet}+(-4,1)*{\scs \ell-j};
  (-12,0)*{\bbf{}};
 \endxy\;\right)
\end{equation}
We compute the above maps on the elements $1 \otimes \xi^{\alpha}$ since these
elements together with relations of Proposition~\ref{prop_relation} and the
bimodule property determine the image on all other elements.

We begin by computing the image of the element $1 \otimes \xi^{\alpha}$ under the
maps on the right hand side of \eqref{eq_alternativeid}.
\begin{eqnarray}
 \Gamma\left(\xy0;/r.14pc/:
 (0,0)*{
   \xy 0;/r.18pc/:
  (20,0)*{\bfit{n-2}};
  (-6,0)*{\twoId};
  (6,0)*{\twoIu};
  (9,-12)*{\bbsid};
  (-9,-12)*{\bbsid};
  (9,10)*{\bbsid};
  (-9,10)*{\bbsid};
  (-3,8)*{}="t1";
  (3,8)*{}="t2";
  "t1";"t2" **\crv{(-3,14) & (3, 14)};
  (-3,-8)*{}="t1";
  (3,-8)*{}="t2";
  "t1";"t2" **\crv{(-3,-14) & (3, -14)} ?(.1)*\dir{>} ?(1)*\dir{>};
 \endxy};
 \endxy \right) \left(
 1 \otimes \xi^{\alpha}
    \right) = \sum_{s=0}^k
 \sum_{j=0}^{\alpha-1}
 \sum_{p=0}^{k-s-1}
 (-1)^{\alpha+n-p-j}
  \xi^{p}X_{n+\alpha-s-p-j-1,n} \otimes \xi^{j} x_{s,n-2}\nn \\
  \quad \qquad +\quad
    \sum_{s=1}^k
    \sum_{j=0}^{\alpha}
    \sum_{p=0}^{k-s-1}
 (-1)^{\alpha+n-p-j+1}
 \xi^{p}X_{n+\alpha-s-p-j,n} \otimes \xi^{j} x_{s-1,n}
    \nn ~.
\end{eqnarray}

Removing terms that are zero and shifting the second term by letting $s'=s-1$ we
have
\begin{eqnarray}
(-1)^{\alpha+n}\left( \sum_{s=0}^{k-1}
 \sum_{j=0}^{\alpha-1}
 \sum_{p=0}^{k-s-1}
 (-1)^{-p-j}
  \xi^{p}X_{n+\alpha-s-p-j-1,n} \otimes \xi^{j} x_{s,n-2}
   \right.  \hspace{1in}
    \nn \\
     \hspace{1.6in}
     \left.
     -
    \sum_{s'=0}^{k-1}
    \sum_{j=0}^{\alpha}
    \sum_{p=0}^{k-s'-2}
     (-1)^{-p-j}
 \xi^{p}X_{n+\alpha-s'-p-j-1,n} \otimes \xi^{j} x_{s',n-2}
    \right) .\nn
\end{eqnarray}
Now the only terms that do not cancel are the $p=k-s-1$ term of the first factor
and the $j=\alpha$ term of the second factor:
\begin{eqnarray}
 (-1)^{\alpha+k-N}
 \sum_{s=0}^{k-1}
 \sum_{j=0}^{\alpha-1}
 (-1)^{-j+1}
  \xi^{k-s-1}X_{\alpha+k-N-j,n} \otimes \xi^{j} x_{s,n-2} \hspace{1.5in} \nn\\
     -
    \sum_{s'=0}^{k-2}
    \sum_{p=0}^{k-s'-2}
     (-1)^{n-p}
\xi^{p}X_{n-s'-p-1,n} \otimes \xi^{\alpha} x_{s',n-2} .\hspace{1in}
\end{eqnarray}
In the first term note that $\alpha-1 \geq \alpha-(N-k)$ since $N-k \geq 1$,
otherwise $k=N$. So we change the upper limit of the $j$ summand to
$\alpha-(N-k)$ and apply \eqref{eq_Xslide}.  In the second summand we note that
we must have $n-s'-p-1 \geq 0$ and that $k-s-2 \geq n-s-1$ if $N-k \geq 1$.
Hence, the $s'$ summation goes only as high as $n-1$, and the $p$ summation only
as high as $n-1-s$ yielding
\begin{eqnarray}
 (-1)^{\alpha+k-N+1}
 \sum_{s=0}^{k-1}
 (-1)^{s}
   \xi^{k-s-1} \otimes X_{\alpha+k-N,n-2} x_{s,n-2}
     -
    \sum_{s'=0}^{n-1}
    \sum_{p=0}^{n-1-s'}
     (-1)^{n-p}
 \xi^{p}X_{n-s-p-1,n} \otimes \xi^{\alpha} x_{s,n-2}.\nn\\ \label{eq_bigmap}
\end{eqnarray}

Now we compute the other maps involved in \eqref{eq_alternativeid}. The second
map on the right hand side is
\begin{eqnarray}
 \Gamma\left(\sum_{\ell=0}^{n-1}
 \sum_{j=0}^{\ell}
    \xy 0;/r.18pc/:
  (20,0)*{\bfit{n-2}};
  (12,0)*{\bbe{}};
  (0,-3)*{\ccbub{-n-1+j}};
  (-12,6)*{\bullet}+(7,1)*{\scs n-1-\ell};
  (12,6)*{\bullet}+(3,1)*{\scs \quad \ell-j};
  (-12,0)*{\bbf{}};
 \endxy\right)\left(
1 \otimes \xi^{\alpha} \right) =
 \sum_{\ell=0}^{n-1}
 \sum_{j=0}^{\ell}
 \sum_{s=0}^{\min(j,k)}
 (-1)^j
 \xi^{n-1-\ell} x_{s,n}X_{j-s,n} \otimes \xi^{\ell-j+\alpha} .
    \nn
 \end{eqnarray}
Notice that $j\leq k$ since $j\leq n-1 = 2k-N-1 = k+(k-N-1)$ and $k-N-1< 0$.
Thus, we remove the $\min(j,k)$ in the summation and use
\eqref{eq_noncanonical_gen} to change the non-canonical generator $x_{s,n}$ into
canonical generators,
\begin{eqnarray}
\sum_{\ell=0}^{n-1}
 \sum_{j=0}^{\ell}
 \sum_{s=0}^{j}
 (-1)^j
 \xi^{n-1-\ell} X_{j-s,n} \otimes \xi^{\ell-j+\alpha}x_{s,n-2}
- \sum_{\ell=0}^{n-1}
 \sum_{j=0}^{\ell}
 \sum_{s=0}^{j}
 (-1)^j
 \xi^{n-1-\ell} X_{j-s,n} \otimes \xi^{\ell-j+\alpha+1}x_{s-1,n-2} . \nn
\end{eqnarray}
Now shift the indices of the second term by letting $s'=s-1$ and $j'=j-1$ so that
the two terms cancel leaving only the $j=\ell$ term of the first term,
\begin{eqnarray}
\sum_{\ell=0}^{n-1}
 \sum_{s=0}^{\ell}
 (-1)^{\ell}
 \xi^{n-1-\ell} X_{\ell-s,n} \otimes \xi^{\alpha}x_{s,n-2}~.
\end{eqnarray}
If we let $p=n-1-\ell$ and $s=s'$ this term cancels with the second term in
\eqref{eq_bigmap}.  One can check that minus the first term in \eqref{eq_bigmap}
is equal to the image of $1 \otimes \xi^{\alpha}$ under the left hand side of
\eqref{eq_alternativeid}.
\end{proof}

\begin{thm}\label{thm_flag}
The assignments given in subsection~\ref{subsec_define_gamma} define a graded
additive 2-functor $\Gamma_N^G \maps \Ucatq \to \GrGs$.  By restricting to degree
preserving 2-morphisms we also get an additive 2-functor $\Gamma_N^G \maps \Ucat
\to \GrG$.
\end{thm}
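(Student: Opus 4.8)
The plan is to recognize Theorem~\ref{thm_flag} as an assembly statement: everything has effectively been reduced, in the lemmas preceding it, to checking that the data of subsection~\ref{subsec_define_gamma} respects each defining relation of $\Ucatq$ from subsection~\ref{subsec_Ucat}, and this is precisely what it means for those data to extend to a graded additive 2-functor. I would first dispatch the part of the definition that lives on objects and $1$-morphisms. The assignment is already pinned down on the generators $\onen$, $\cal{E}\onen$, $\cal{F}\onen$ and their shifts, and extended to composites by tensoring bimodules over the rings $\HG_k$; by \eqref{eq_kh_iso} this extension preserves horizontal composition of $1$-morphisms up to coherent isomorphism, so $\Gamma_N^G$ is a weak $2$-functor, and direct sums of $1$-morphisms visibly go to direct sums of bimodules, giving additivity. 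The grading shifts attached to $\cal{E}\onen$, $\cal{F}\onen$ and to the cups, caps, dots and crossings were chosen exactly so that $\Gamma_N^G$ is degree-preserving on all generating $2$-morphisms, as was verified directly after Definition~\ref{def_biadjoint} and in Definition~\ref{def_nil_generator}; Corollary~\ref{cor_bimodule_maps} (and the obvious fact that the caps only see $m_1+m_2$) guarantees that these generators land in honest bimodule maps.

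The remaining content is that the relations on $2$-morphisms are satisfied, and each family has a matching lemma. Biadjointness of $\onem\cal{E}\onen$ and $\onen\cal{F}\mathbf{1}_{n+2}$ with the prescribed units and counits is Lemma~\ref{lem_biadjoint}. Cyclicity is handled by Lemma~\ref{lem_dot_slide} for the dot $z_n$ (whose proof reduces to Proposition~\ref{prop_dot_slide} via the already-established biadjointness) and by the ``Duality for $U_n$'' lemma for the crossing; together with biadjointness these yield invariance of $\Gamma_N^G$ under boundary-preserving planar isotopy. Vanishing of negative-degree closed bubbles is the ``Positive degree of closed bubbles'' lemma. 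The nilHecke relations \eqref{eq_Nil_nilpotent}, \eqref{eq_Nil_II}, \eqref{eq_Nil_ReidemeisterIII} are the ``NilHecke action'' lemma, with Remark~\ref{rem_nil_zero} supplying $u_i^2=0$ and the far-commutativity relations being automatic since distant strands act on disjoint tensor factors. The reduction relations \eqref{eq_reduction} are the ``Reduction to bubbles'' lemma, and the $\mathfrak{sl}_2$/identity-decomposition relations \eqref{eq_ident_decomp} are the ``Identity decomposition'' lemma. The one point requiring vigilance throughout, as flagged at the start of subsection~\ref{subsec_Ucat} and repeatedly afterward, is that none of these verifications may invoke the extra relations valid in the ordinary cohomology rings but absent in $\HG_k$; each cited lemma is proved with that restriction respected.

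Putting this together: since every generator goes to a well-defined bimodule map of the correct degree and every relation of $\Ucatq$ is satisfied in $\GrGs$, these data extend uniquely to a graded additive $2$-functor $\Gamma_N^G\maps\Ucatq\to\GrGs$. For the second assertion, restricting to the degree-preserving $2$-morphisms of $\Ucatq$ produces an additive $2$-functor $\Ucat\to\cat{Bim}$ whose image lies in the (not-yet-idempotent-completed) sub-$2$-category of $\cat{Bim}$ out of which $\GrG$ is built by splitting idempotents; corestricting to that sub-$2$-category, and then including it into $\GrG$, gives the desired $2$-functor $\Gamma_N^G\maps\Ucat\to\GrG$.

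I expect the only genuinely substantial step to be the identity-decomposition relation, which is the sole relation demanding a long bimodule computation (carried out in the corresponding lemma by evaluating both sides on the elements $1\otimes\xi^{\alpha}$ and using Propositions~\ref{prop_relation}, \ref{prop_two_def}, \ref{prop_dot_slide}); the rest of the proof of the theorem is bookkeeping, namely matching each family of defining relations to its lemma and double-checking that the grading shifts assigned to $\cal{E}\onen$, $\cal{F}\onen$ and to the cups and caps are mutually consistent so that all the constructed maps are indeed degree-preserving and the stated restriction to $\Ucat$ makes sense.
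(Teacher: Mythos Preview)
Your proposal is correct and follows essentially the same approach as the paper: both treat the theorem as an assembly statement, noting that composites of 1-morphisms are preserved up to isomorphism (via \eqref{eq_kh_iso}), degrees and direct sums are preserved, and that the preceding lemmas verify each defining relation of $\Ucatq$. The paper's own proof is considerably terser---it simply says ``the lemmas above show that $\Gamma_N^G$ preserve the defining relations''---whereas you spell out the lemma-to-relation correspondence explicitly, but the logical content is the same.
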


\begin{proof}
We have already seen that $\Gamma_N^G$ preserves composites of 1-morphisms up to
isomorphism. The lemmas above show that $\Gamma_N^G$ preserve the defining
relations of the 2-morphisms in $\Ucatq$.  Therefore, since degrees and direct
sums are also preserved, $\Gamma_N^G$ is a graded additive 2-functor.  It is
clear that restricting to the degree preserving maps gives the restricted
additive 2-functor $\Gamma_N^G \maps \Ucat \to \GrG$.
\end{proof}

% ==============================================================================
%
\subsection{Categorification of $V_N$}
%
% ==============================================================================

\begin{thm} \label{thm_catVN}
The representation $\Gamma^{G}_N \maps \Ucat \to \GrG$ yields a representation
$\dot{\Gamma}_N^G \maps \UcatD \to \GrG$. This representation categorifies the irreducible $(N+1)$-dimensional representation
$V_N$ of $\U$.
\end{thm}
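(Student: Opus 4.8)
The plan is to handle the two assertions of the theorem separately. The first --- existence of $\dot\Gamma_N^G\maps\UcatD\to\GrG$ --- is formal. Theorem~\ref{thm_flag} already provides the additive 2-functor $\Gamma_N^G\maps\Ucat\to\GrG$, and $\GrG$ was defined as an idempotent completion inside $\cat{Bim}$, so idempotent 2-morphisms split in every hom-category $\GrG(\HG_k,\HG_l)$. I would then apply the universal property of the Karoubi envelope recalled just before Theorem~\ref{thm_Groth}, hom-category by hom-category, to the underlying functors $\Ucat(\bfit{n},\bfit{m})\to\GrG(\HG_k,\HG_l)$, extending each to a functor on $\UcatD(\bfit{n},\bfit{m})=Kar\big(\Ucat(\bfit{n},\bfit{m})\big)$; since horizontal composition in $\UcatD$ is itself induced from that of $\Ucat$ by the same universal property, these assemble into a 2-functor $\dot\Gamma_N^G\maps\UcatD\to\GrG$, unique up to isomorphism and restricting to $\Gamma_N^G$ on $\Ucat$.

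For the second assertion I would pass to split Grothendieck groups. Because $\GrG$ is a full sub-2-category of $\cat{Bim}$, it acts on the $\Z[q,q^{-1}]$-linear additive category $\cal{M}:=\bigoplus_{0\le k\le N}\cal{M}_k$, where $\cal{M}_k$ is the category of finitely generated graded projective $\HG_k$-modules; this category is preserved by the action, since the bimodules occurring as 1-morphisms of $\GrG$ are finitely generated graded free over each of the two rings they connect (the forgetful maps between iterated flag varieties are iterated projective-space bundles; cf.\ Section~\ref{subsex_onestep}), so tensoring with them sends graded free modules to graded free modules. Via $\dot\Gamma_N^G$ this makes $\cal{M}$ a module over $\UcatD$, hence, using $K_0(\UcatD)\cong\UA$ (Theorem~\ref{thm_Groth}), makes $K_0(\cal{M})=\bigoplus_k K_0(\cal{M}_k)$ a module over $\UA$. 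Now each $\HG_k\cong\Q[x_1,\dots,x_k,y_1,\dots,y_{N-k}]$ is a Noetherian, non-negatively graded ring with degree-zero part $\Q$, so by graded Nakayama every finitely generated graded projective $\HG_k$-module is graded free; therefore $K_0(\cal{M}_k)\cong\Z[q,q^{-1}]$, generated by $[\HG_k]$ with $q$ recording the grading shift, and $K_0(\cal{M})$ is free of rank $N+1$ on $\{[\HG_k]\}_{0\le k\le N}$, with $[\onen]$ acting as the projection onto $[\HG_k]$ when $n=2k-N$ and as $0$ otherwise --- this is the weight-space decomposition.

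It then remains to identify the operators. The classes $[\cal{E}\onen]$ and $[\cal{F}\onen]$ act on $K_0(\cal{M})$ by tensoring with the bimodules $\HG_{k,k+1}$ and $\HG_{k+1,k}$, shifted as prescribed in Section~\ref{subsec_define_gamma}; by the projective-bundle structure of the forgetful maps $G_{k,k+1}\to G_k$ and $G_{k,k+1}\to G_{k+1}$ --- equivalently, from the presentations of $\HG_{k,k+1}$ over $\HG_k$ and over $\HG_{k+1}$ in the canonical generators of Section~\ref{subsex_onestep} --- these bimodules are graded free of graded ranks $1+q^2+\cdots+q^{2(N-k-1)}$ and $1+q^2+\cdots+q^{2k}$, which, combined with the grading shifts, become exactly the quantum integers appearing in the action of $E$ and $F$ on the standard weight basis $v_0,\dots,v_N$ of the integral form ${}_{\cal{A}}V_N$ of the irreducible $(N+1)$-dimensional $\U$-module, under $[\HG_k]\leftrightarrow v_k$. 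This computation is word-for-word the one carried out for the non-equivariant functor $\Gamma_N$ in \cite{Lau3} (compare \cite{CR,FKS,BLM}); crucially, the extra relations defining the ordinary cohomology rings are never invoked, since they do not affect the graded ranks of free modules. Hence $K_0(\cal{M})\cong{}_{\cal{A}}V_N$ as $\UA$-modules, so $\Q(q)\otimes_{\Z[q,q^{-1}]}K_0(\cal{M})\cong V_N$ as $\U$-modules, which is the asserted categorification; and since every weight space of ${}_{\cal{A}}V_N$ has rank one over $\Z[q,q^{-1}]$, the isomorphism classes of indecomposable 1-morphisms of $\GrG$ (in weight $n=2k-N$, simply $\HG_k$ and its shifts) automatically descend to the canonical basis of $V_N$. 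The one point demanding genuine care beyond \cite{Lau3} is the computation of $K_0$ of the objects: the $\HG_k$ are now infinite-dimensional, so the finite-dimensional Frobenius structure used there is unavailable and one must instead run the graded Nakayama argument above; I expect that, together with the bookkeeping of grading shifts, to be the entirety of the real work, the remainder being formal or a verbatim transcription of \cite{Lau3}.
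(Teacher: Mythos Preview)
Your proposal is correct and follows essentially the same route as the paper: the universal property of the Karoubi envelope for the extension $\dot\Gamma_N^G$, then the split Grothendieck group of $\bigoplus_k \HG_k\text{-pmod}$ together with the observation that each $\HG_k$ is a non-negatively graded local ring with degree-zero part $\Q$ (your graded Nakayama) so that graded projectives are free and $K_0$ is free of rank $N+1$. You actually supply more detail than the paper does --- in particular the graded-rank computation of $\HG_{k,k+1}$ over each side and the check that tensoring preserves finitely generated projectives --- whereas the paper simply invokes Theorem~\ref{thm_Groth} to conclude that the induced operators on $K_0$ satisfy the $\U$-relations and leaves the identification with $V_N$ implicit.
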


\begin{proof}
Idempotent 2-morphisms split in $\GrG$ so, by the universal property of the
Karoubi envelope, we have
\[
 \xymatrix{
  \Ucat \ar[dr]_{\Gamma_N^{G}} \ar[r]^{} & \UcatD \ar@{.>}[d]^{\dot{\Gamma}_N^G} \\
  & \GrG
 }
\]
showing that $\Gamma_N^G$ induces a representation $\dot{\Gamma}_N^G$ of
$\UcatD$.

The rings $\HG_k$ forming the objects of $\GrGs$ are (positively, even) graded
local rings with degree zero part isomorphic to $\Q$. Thus, every graded
finitely-generated projective module is free, and $\HG_k$ has (up to isomorphism
and grading shift) a unique graded indecomposable projective module. Let
$\HG_k{\rm -pmod}$ denote the category of finitely generated graded projective
$\HG_k$-modules.  The split Grothendieck group of the category
$\bigoplus_{j=0}^{N}\HG_j{\rm -pmod}$ is then a free $\Z[q,q^{-1}]$-module of
rank $N+1$, freely generated by the indecomposable projective modules, where
$q^i$ acts by shifting the grading degree by $i$. Thus, we have
\begin{eqnarray}
K_0\big(\bigoplus_{k=0}^{N}H_k{\rm -pmod}\big) \cong {}_{\cal{A}}(V_N), \qquad
K_0\big(\bigoplus_{k=0}^{N}H_k{\rm -pmod}\big)\otimes_{\Z[q,q^{-1}]}\Q(q) \cong
V_N,
\end{eqnarray}
as $\Z[q,q^{-1}]$-modules, respectively $\Q(q)$-modules, where
${}_{\cal{A}}(V_N)$ is a representation of $\UA$, an integral form of the
representation $V_N$ of $\U$.

The bimodules $\Gamma_N^G(\onen)$, $\Gamma_N^G(\cal{E}\onen)$ and
$\Gamma_N^G(\cal{F}\onen)$ induce, by tensor product, functors on the graded
module categories.  More precisely, consider the restriction functors
\begin{eqnarray}
\Res^{k,k+1}_{k} &\maps& \HG_{k,k+1}{\rm -pmod} \to \HG_{k}{\rm -pmod} \nn\\
\Res^{k,k+1}_{k+1}& \maps& \HG_{k,k+1}{\rm -pmod} \to \HG_{k+1}{\rm -pmod} \nn
\end{eqnarray} given
by the inclusions $\HG_k \to \HG_{k,k+1}$ and $\HG_{k+1} \to \HG_{k,k+1}$. For $0
\leq k \leq N$ and $n=2k-N$ define functors
\begin{eqnarray}
 \mathbf{1}_n &:=& \HG_k \otimes_{\HG_k} \maps \HG_{k} {\rm -pmod} \to \HG_{k}{\rm -pmod} \\
 \mathbf{E1}_n &:=& \Res^{k,k+1}_{k+1} \HG_{k+1,k} \otimes_{\HG_k} \{1-N+k\} \maps
 \HG_{k}{\rm -pmod} \to \HG_{k+1}{\rm -pmod} \\
 \mathbf{F1}_{n+2} &:=& \Res^{k,k+1}_{k} \HG_{k,k+1} \otimes_{\HG_{k+1}} \{-k\} \maps
\HG_{k+1} {\rm -pmod} \to \HG_{k} {\rm -pmod}.
\end{eqnarray}
These functors have both left and right adjoints and commute with the shift
functor, so they will induce $\Z[q,q^{-1}]$-module maps on Grothendieck groups.
Furthermore, the 2-functor $\dot{\Gamma}_N^G$ must preserve the relations of
$\UcatD$, so by Theorem~\ref{thm_Groth} these functors satisfy relations lifting
those of $\U$.
\end{proof}

% ==============================================================================
% REFERENCES

%
% This file was generated by bibtex using the style 'hpall.bst'.
%

%
% This is the preamble.
%

\def\cprime{$'$}

%
% These definitions are used by this particular style.
%

\newenvironment{hpabstract}{%
  \renewcommand{\baselinestretch}{0.2}
  \begin{footnotesize}%
}{\end{footnotesize}}%
\newcommand{\hpeprint}[1]{%
  \href{http://arXiv.org/abs/#1}{\texttt{#1}}}%
\newcommand{\hpspires}[1]{%
  \href{http://www.slac.stanford.edu/spires/find/hep/www?#1}{\ (spires)}}%
\newcommand{\hpmathsci}[1]{%
  \href{http://www.ams.org/mathscinet-getitem?mr=#1}{\texttt{MR #1}}}%

%
% ==============================================================================

% ==============================================================================
%
\end{document}